\newcommand{\doublespace} {\addtolength{\baselineskip}{.50\baselineskip}}


\documentclass[reqno,10pt]{amsart}
\usepackage {amsmath, amssymb, enumerate, amsthm}
\usepackage{graphicx}
\usepackage{cite}

\setlength{\textheight}{20cm} \setlength{\oddsidemargin}{0.16in}
\setlength{\oddsidemargin}{0.16in}
\setlength{\evensidemargin}{0.16in} \setlength{\textwidth}{15cm}

\newtheorem{theorem}{Theorem}[section]
\newtheorem{lemma}{Lemma}[section]

\newtheorem{proposition}{Proposition}[section]
\newtheorem{remark}{Remark}[section]
\newtheorem{definition}{Definition}[section]

\newtheorem{thm}[theorem]{\bf Theorem}

\newtheorem{lem}[lemma]{\bf Lemma}
\newtheorem{rmk}[remark]{\bf Remark}
\newtheorem{prop}[proposition]{\bf Proposition}
\newtheorem{defn}[definition]{\bf Definition}


\begin{document}

\allowdisplaybreaks  

\doublespace

\title[wave equation with variable coefficients and supercritical source] {Global solutions and blow-up for the wave equation with variable coefficients: II. boundary supercritical source}

\author[Tae Gab Ha]{Tae Gab Ha}

\address{Department of Mathematics, and Institute of Pure and Applied Mathematics, Jeonbuk National University, Jeonju 54896, Republic of Korea}

\email{tgha@jbnu.ac.kr}

\subjclass[2020]{35L05; 35L20; 35A01; 35B40; 35B44}

\keywords{wave equation with variable coefficients; supercritical source; existence of solutions; energy decay rates; blow-up}


\date{}
\maketitle

\begin{abstract}
In this paper, we consider the wave equation with variable coefficients and boundary damping and supercritical source terms. The goal of this work is devoted to prove the local and global existence, and classify decay rate of energy depending on the growth near zero on the damping term. Moreover, we prove the blow-up of the weak solution with positive initial energy as well as nonpositive initial energy.
\end{abstract}

\section{ Introduction }
\setcounter{equation}{0}

In this paper, we are concerned with the local and global existence, energy decay rates and finite time blow-up of the solution for the following wave equation
\begin{equation}\label{1}
\begin{cases}
\vspace{3mm} u_{tt} - \mu(t) Lu = f(x,t) &\hspace{5mm} \text{in} \hspace{5mm}
\Omega~~\times~~ (0, +\infty),
\\
\vspace{3mm} u = 0 &\hspace{5mm} \text{on} \hspace{5mm} \Gamma_0 ~~\times~~ (0, +\infty),
\\
\vspace{3mm} \mu(t) \frac{\partial u}{\partial \nu_L} + q(u_t) = h(u) &\hspace{5mm} \text{on} \hspace{5mm} \Gamma_1 ~~\times~~ (0, +\infty),
\\
\vspace{3mm} u(x,0) = u_0(x), \hspace{5mm} u_t(x,0) = u_1(x),
\end{cases}
\end{equation}
where $Lu =  div (A(x)\nabla u) =  \sum^n_{i,j=1} \frac{\partial}{\partial x_i} \bigl(a_{ij}(x)\frac{\partial u}{\partial x_j}\bigr)$, where $A(x) = (a_{ij}(x))$ is a symmetric and positive matrix, and $\frac{\partial u}{\partial \nu_L} = \sum^n_{i,j=1} a_{ij}(x)\frac{\partial u}{\partial x_j}\nu_i$, where $\nu = (\nu_1, \cdots, \nu_n)$ is the outward unit normal to $\Gamma$. $\Omega$ is a bounded domain of $\mathbb{R}^n$($n \geq 3$) with smooth boundary $\Gamma =\Gamma_0 \cup \Gamma_1$. Here, $\Gamma_0$ and $\Gamma_1$ are closed and disjoint with $meas(\Gamma_0) \neq 0$.

During the past decades, the problem \eqref{1} has been widely studied. The condition $h(s)s \leq 0$ means that $h$ represents an attractive force. When $h(s)s \geq 0$ as in the present case, $h$ represents a source term. This situation is more delicate than attractive force, since the solution of \eqref{1} can blow up. The damping-source interplay in system \eqref{1} arise naturally in many contexts, for instance, in classical mechanics, fluid dynamics and quantum field theory (cf. \cite{lasi1, sega}). The interaction between two competitive force, that is damping term and source term, make the problem attractive from the mathematical point of view.

For the present case when $h$ is polynomial nonlinear source term such as $h(u) = |u|^\gamma u$, the stability of \eqref{1} has been studied by many authors (see \cite{cava1, cava2, ha1, ha2, ha3, ha4, ha5, ha6, ha7, ha8, ha9, ha10, ha11, ha12, komo, lasi2, lasi3, park1, park2, park3, park4, viti2} and a list on references therein), where $h(u)$ is subcritical or critical source. However, very few results addressed wave equations influenced by supercritical sources (cf. \cite{boci1, boci3, guo1, guo2, viti1}). For example, \cite{viti1} proved the local and global existence, uniqueness and Hadamard well-posedness for the wave equation when source terms can be supercritical or super-supercritical. However, the author do not considered the energy decay and blow-up of the solutions. \cite{guo1} considered a system of nonlinear wave equations with supercritical sources and damping terms. They proved global existence and exponential and algebraic uniform decay rates of energy, moreover, blow-up result for weak solutions with nonnegative initial energy. But as far as I know, the only problem with considering supercritical source is the constant coefficients case, that is, $A = I$ and dimension $n=3$.

In the case of variable coefficients, that is $A \neq I$, boundary stability of the wave equation was considered in \cite{bouk, cava3, ha2, ha5, ha8}. The wave equations with variable coefficients arise in mathematical modeling of inhomogeneous media in solid mechanics, electromagnetic, fluid flows through porous media. For the variable coefficients problem, the main tool is Riemannian geometry method, which was introduced by \cite{yao2} and has been widely used in the literature, see \cite{cao, cava3, lasi4, li, lu, wu1, yao1} and a list of references therein. However, there were very few results considered the source term. For example, \cite{bouk} proved the uniform decay rate of the energy to the viscoelastic wave equation with variable coefficients and acoustic boundary conditions without damping term. Recently, \cite{ha11} studied the general decay rate of the energy for the wave equation with variable coefficients and Balakrishnan-Taylor damping and source term without imposing any restrictive growth near zero on the damping term. However, above mentioned examples were not considered Riemannian geometry, and only treated a subcritical source. More recently, \cite{ha5-1} proved the uniform energy decay rates of the wave equation with variable coefficients applying the Riemannian geometry method and modified multiplier method. But, it was considered a subcritical source. \cite{ha8-1} studied local and global existence, energy decay rate and blow-up of solution for the wave equation with variable coefficients, however it was not considered Reimannian geometry, and was treated the interior supercritical source. There is none, to my knowledge, for the variable coefficients problem having both damping and source terms on Riemannian geometry as well as considering boundary supercritical source.

Our main motivation is constituted by three dimensional case, in which the source term can be supercritical on variable coefficient problem. The differences from previous literatures are as follows:
\begin{enumerate}[(i)]
\item Supercritical source for $n \geq 3$.

\item Variable coefficient problem having source term on Riemannian geometry.

\item Blow-up result with positive initial energy as well as nonpositive initial energy.
\end{enumerate}

In order to overcome difficulties to prove above statements, first, we refine the energy space and a constant used in potential well method, because we do not guarantee $H^1_0(\Omega) \hookrightarrow L^{\gamma+2}(\Gamma_1)$ since the source term is supercritical. Also we have a hypothesis on damping term for proving existence of solutions and energy decay rates (see Remark 2.2). Second, we use the Faedo-Galerkin method because nonlinear semigroup arguments considered in the previous literatures cannot be used since this paper deal with an operator $- \mu(t) L$, which depends on $t$. Third, we refine the key point constants used to prove blow-up result. So, this paper has improved and generalized previous literatures.

The goal of this paper is to prove the existence result using the Faedo-Galerkin method and truncated approximation method, and classify the energy decay rate applying the method developed in \cite{yao2} and \cite{mart}. Moreover, we prove the blow-up of the weak solution with positive initial energy as well as nonpositive initial energy. This paper is organized as follows: In Section 2, we recall the notation, hypotheses and some necessary preliminaries and introduce our main result. In Section 3, we prove the local existence of weak solutions, and show the global existence of weak solution in each two conditions in Section 4. In Section 5, we prove the uniform decay rate under suitable conditions on the initial data and boundary damping by the differential geometric approach. In Section 6, we prove the blow-up of the weak solution with positive initial energy as well as nonpositive initial energy. by using contradiction method.

\section{{\bf  Preliminaries} }
\setcounter{equation}{0}

We begin this section by introducing some notations and our main results. Throughout this
paper, we define the Hilbert space $H^1_0(\Omega) = \{u \in H^1(\Omega) ; u = 0~~ \text{on} ~~ \Gamma_0\}$ with the norm $||u||_{H^1_0(\Omega)} = ||\nabla u||_{L^2(\Omega)}$ and $\mathcal{H} = \{u \in H^1_0(\Omega) ; u \in L^{\gamma+2}(\Gamma_1)\}$ with the norm $||u||_\mathcal{H} = ||u||_{H^1_0(\Omega)} + ||u||_{L^{\gamma+2}(\Gamma_1)}$. $||\cdot||_p$ and $||\cdot||_{p,
\Gamma}$ are denoted by the $L^p(\Omega)$ norm and the $L^p(\Gamma)$ norm, respectively, and $\langle u, v\rangle = \int_\Omega u(x)v(x) dx$ and $\langle u, v\rangle_\Gamma = \int_\Gamma u(x) v(x) d\Gamma$. Moreover, we need some notations on Riemannian geometry, it is mentioned in \cite{yao2} and reference therein. For the reader's comprehension, we will repeat them here.

Let $A(x) = (a_{ij}(x))$ be a symmetric and positive definite matrix for all $x \in \mathbb{R}^n$ $(n \geq 3)$ and $a_{ij}(x)$ be smooth functions on $\mathbb{R}^n$ satisfying
\begin{equation}\label{201}
c_1 \sum^n_{i=1} \xi^2_i \leq \sum^n_{i,j =1} a_{ij}(x) \xi_i \xi_j, \hspace{3mm} \forall x\in \mathbb{R}^n, \hspace{3mm} 0 \neq \xi = (\xi_1, \cdots, \xi_n)^T \in \mathbb{R}^n,
\end{equation}
where $c_1$ is a positive constants. Set
$$
G(x) = \bigl( g_{ij}(x) \bigr) = A^{-1}(x).
$$
For each $x \in \mathbb{R}^n$, we define the inner product $g(\cdot, \cdot) = \langle \cdot, \cdot \rangle_g$ and the norm $|\cdot|_g$ on the tangent space $\mathbb{R}^n_x = \mathbb{R}^n$ by
\begin{equation}\label{202}
g(X, Y) = \langle X, Y \rangle_g = \sum^n_{i,j=1} g_{ij}(x) \alpha_i \beta_j, \hspace{3mm} |X|_g = \langle X, X \rangle^\frac{1}{2}_g, \hspace{3mm} \forall X = \sum^n_{i=1}\alpha_i\frac{\partial}{\partial x_i}, ~~ Y = \sum^n_{i=1}\beta_i\frac{\partial}{\partial x_i} \in \mathbb{R}^n_x.
\end{equation}
Then $(\mathbb{R}^n, g)$ is a Riemannian manifold with Riemann metric $g$. $\nabla_g u$ and $D_g$ are denoted by the gradient of u and Levi-Civita connection in the Riemannian metric $g$, respectively. It follows that
\begin{equation}\label{203}
\nabla_g u = \sum^n_{i=1} \Bigl(\sum^n_{j=1} a_{ij}(x) \frac{\partial u}{\partial x_j} \Bigr) \frac{\partial}{\partial x_i} = A(x) \nabla u, \hspace{3mm} |\nabla_g u|^2_g = \sum^n_{i,j=1} a_{ij}(x) \frac{\partial u}{\partial x_i}\frac{\partial u}{\partial x_j}.
\end{equation}

Let $H$ be a vector field on $(\mathbb{R}^n, g)$. Then the covariant differential $D_gH$ of $H$ determines a bilinear form on $\mathbb{R}^n_x \times \mathbb{R}^n_x$, for each $x \in \mathbb{R}^n$, by
\begin{equation}\label{204}
D_gH(X, Y) = \langle D_{gX} H, Y \rangle_g, \hspace{3mm} \forall X, Y \in \mathbb{R}^n_x,
\end{equation}
where $D_{gX}H$ is the covariant derivative of the vector field $H$ with respect to $X$.

\textbf{$\bf{(H_1)}$ Hypothesis on $\bf\Omega$}.

Let $\Omega\subset\mathbb{R}^n$ be a bounded domain, $n \geq 3$, with smooth boundary $\Gamma = \Gamma_0\cup \Gamma_1$. Here $\Gamma_0$ and $\Gamma_1$ are closed and disjoint with $meas(\Gamma_0) \neq 0$. There exists a vector field $H$ on the Riemannian manifold $(\mathbb{R}^n, g)$ such that
\begin{equation}\label{205}
D_gH(X,X) \geq \sigma |X|^2_g, \hspace{3mm} \forall X \in \mathbb{R}^n_x, \hspace{3mm} x \in \overline{\Omega},
\end{equation}
where $\sigma$ is a positive constant and
\begin{equation}\label{206}
H \cdot \nu \leq 0 \hspace{3mm}\text{on}\hspace{3mm} \Gamma_0 \hspace{3mm}\text{and}\hspace{3mm} H \cdot \nu \geq \delta> 0 \hspace{3mm}\text{on}\hspace{3mm} \Gamma_1,
\end{equation}
where $\nu$ represents the unit outward normal vector to $\Gamma$. Moreover we assume that
\begin{equation}\label{207}
\mu(0)\frac{\partial u_0}{\partial\nu_L} + g(u_1) = h(u_0) \hspace{3mm}
\text{on} \hspace{3mm} \Gamma_1.
\end{equation}

\textbf{$\bf{(H_2)}$ Hypothesis on $\bf\mu$, $\bf f$}.

Let $\mu \in W^{1, \infty}(0,\infty)\cap W^{1,1}(0, \infty)$ satisfying following
conditions:
\begin{equation}\label{208}
\hspace{5mm} \mu(t) \geq \mu_0 > 0 \hspace{5mm} \text{and}
\hspace{5mm} \mu'(t) \leq 0 \hspace{3mm} \text{a.e. in} \hspace{3mm} [0, \infty ),
\end{equation}
where $\mu_0$ is a positive constant. We assume that
\begin{equation}\label{209}
f \in H^1(0,\infty ; L^2(\Omega)).
\end{equation}

\textbf{$\bf(H_3)$ Hypothesis on $\bf q$}.

Let $q : \mathbb{R} \rightarrow \mathbb{R}$ be a nondecreasing $C^1$ function such that $q(0) = 0$ and suppose that there exist positive constants $c_3$, $c_4$, and a strictly increasing and odd function $\beta$ of $C^1$ class on $[-1, 1]$ such that
\begin{equation}\label{210}
|\beta(s)| \leq |q(s)| \leq |\beta^{-1}(s)| \hspace{3mm} \text{if} \hspace{3mm} |s| \leq 1,
\end{equation}
\begin{equation}\label{211}
c_3 |s|^{\rho+1} \leq |q(s)| \leq c_4 |s|^{\rho+1} \hspace{3mm} \text{if} \hspace{3mm} |s| > 1,
\end{equation}
where $\beta^{-1}$ denotes the inverse function of $\beta$, and $\rho \geq \frac{2(n-2)\gamma - 2}{n - (n-2)\gamma}$.

\textbf{$\bf{(H_4)}$ Hypothesis on $\bf\gamma$}.

Let $\gamma$  be a constant satisfying the following condition:
\begin{equation}\label{212}
\frac{1}{n-2} < \gamma \leq \frac{n-1}{n-2}.
\end{equation}

\begin{lem}(\cite{yao2})
Let $u, v \in C^1(\overline{\Omega})$ and $H$, $X$ be vector fields on $(\mathbb{R}^n, g)$. Then
\begin{enumerate}[(i)]
\item
$$
H(u) = \langle \nabla_g u, H\rangle_g, \hspace{3mm} \langle H(x), A(x)X(x)\rangle_g = H(x)\cdot X(x).
$$

\item
$$div(uH) = u ~div(H) + H(u),\hspace{3mm} \int_\Omega div(H) ~dx = \int_\Gamma H\cdot \nu ~d\Gamma.
$$

\item
$$
\int_\Omega u ~div (\nabla_g v) ~dx = \int_\Gamma u \frac{\partial v}{\partial \nu_L} ~d\Gamma - \int_\Omega \langle \nabla_g u, \nabla_g v\rangle_g ~dx.
$$

\item
$$
\langle\nabla_g u, \nabla_g (H(u))\rangle_g = D_g H(\nabla_g u, \nabla_g u) + \frac{1}{2} div (|\nabla_g u|^2_g H) - \frac{1}{2}|\nabla_g u|^2_g div (H).
$$
\end{enumerate}
\end{lem}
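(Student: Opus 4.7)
The plan is to verify each of the four identities by unpacking the Riemannian quantities in terms of the coordinate matrix $A(x)=(a_{ij}(x))$ and the Euclidean structure, using the reciprocal relation $G(x) = A^{-1}(x)$ from the setup. Only (iv) has genuine geometric content; (i)--(iii) are bookkeeping.

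For (i), I would compute $\langle \nabla_g u, H\rangle_g$ directly. With $\nabla_g u = A\nabla u$ from \eqref{203}, writing $H = \sum_k h_k \frac{\partial}{\partial x_k}$, and using the orthogonality $\sum_i g_{ik}(x) a_{ij}(x) = \delta_{kj}$, the sum $\sum_{i,k} g_{ik}\bigl(\sum_j a_{ij}\frac{\partial u}{\partial x_j}\bigr) h_k$ collapses to $\sum_k h_k \frac{\partial u}{\partial x_k} = H(u)$. The second identity in (i) is obtained by the identical cancellation applied to $\langle H, A X\rangle_g$.

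For (ii), the first equality is the coordinate Leibniz rule for the Euclidean divergence, and the second is the classical divergence theorem on $\Omega$. Statement (iii) then follows from (ii) applied to the vector field $u\nabla_g v$: by (i) with $H$ replaced by $\nabla_g v$ we get $\mathrm{div}(u\nabla_g v) = u\,\mathrm{div}(\nabla_g v) + \langle \nabla_g u, \nabla_g v\rangle_g$, and integrating while observing $(\nabla_g v)\cdot \nu = (A\nabla v)\cdot \nu = \sum_{i,j} a_{ij}\frac{\partial v}{\partial x_j}\nu_i = \frac{\partial v}{\partial \nu_L}$ converts the boundary term into the $\partial v/\partial \nu_L$ integral.

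The main obstacle is (iv). I would start from the metric compatibility of the Levi-Civita connection: for any vector field $X$,
$$ X\langle \nabla_g u, H\rangle_g = \langle D_{g,X}\nabla_g u, H\rangle_g + \langle \nabla_g u, D_{g,X} H\rangle_g. $$
By (i) the left side equals $X(H(u)) = \langle \nabla_g(H(u)), X\rangle_g$, and the right side is $D^2_g u(X, H) + D_g H(X, \nabla_g u)$, where the Hessian $D^2_g u(\cdot, \cdot) = \langle D_{g,\cdot}\nabla_g u, \cdot\rangle_g$ is symmetric thanks to torsion-freeness of $D_g$ (a quick check: the antisymmetric part equals $[X,Y](u) - \langle \nabla_g u, [X,Y]\rangle_g = 0$ by (i)). Specializing $X = \nabla_g u$ yields
$$ \langle \nabla_g u, \nabla_g(H(u))\rangle_g = D^2_g u(\nabla_g u, H) + D_g H(\nabla_g u, \nabla_g u). $$
Symmetry of the Hessian gives $2D^2_g u(\nabla_g u, H) = H(|\nabla_g u|^2_g)$, and the Leibniz identity of (ii) rewrites this as $\mathrm{div}(|\nabla_g u|^2_g H) - |\nabla_g u|^2_g \mathrm{div}(H)$; substituting yields (iv). The delicate point is keeping straight the two distinct roles of $\nabla_g u$ (as the one-form being Hessian-differentiated and as the test direction) while invoking the symmetry; once that is organized, the identity falls out.
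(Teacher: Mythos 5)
Your proof is correct. Note, however, that the paper itself offers no proof of this lemma: it is imported verbatim from Yao's paper \cite{yao2}, so there is no in-paper argument to compare against. Your verification is the standard one and all steps check out. Parts (i)--(iii) are exactly the bookkeeping you describe: the cancellation $\sum_i g_{ik}a_{ij}=\delta_{kj}$ coming from $G=A^{-1}$ (together with symmetry of $A$ and $G$) gives (i), and (iii) is (ii) applied to $u\nabla_g v$ combined with the identification $(A\nabla v)\cdot\nu=\partial v/\partial\nu_L$. For (iv), your route via metric compatibility and the symmetric Hessian is the clean one: after observing from (ii) that the last two terms of (iv) collapse to $\tfrac12 H(|\nabla_g u|^2_g)$, the identity reduces to $\langle\nabla_g u,\nabla_g(H(u))\rangle_g=D_gH(\nabla_g u,\nabla_g u)+\tfrac12 H(|\nabla_g u|^2_g)$, which is precisely what metric compatibility at $X=\nabla_g u$ plus symmetry of $\mathrm{Hess}\,u$ delivers; your torsion-freeness check of that symmetry, using (i) to cancel $[X,Y](u)$ against $\langle\nabla_g u,[X,Y]\rangle_g$, is also correct. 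The only caveat worth recording is a regularity one already latent in the statement: (iv) involves $\nabla_g(H(u))$ and hence second derivatives of $u$, so the computation tacitly requires $u\in C^2$ (or a density argument), not merely $u\in C^1(\overline\Omega)$ as stated.
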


\begin{rmk}
Hypothesis \eqref{205} was introduced by Yao \cite{yao2} for the exact controllability of the wave equation with variable coefficients. The existence of such a vector field depends on the sectional curvature of the Riemannian manifold $(\mathbb{R}^n, g)$. There are several methods and examples in \cite{yao2} to find out a vector field $H$ that is satisfied the hypothesis \eqref{205}. Specially, if $A = I$, the constant coefficient case, the condition \eqref{205} is automatically satisfied by choosing $H = x - x_0$ for any $x_0 \in \mathbb{R}^n$.
\end{rmk}

\begin{rmk}
In view of the critical Sobolev imbedding $H^{\frac{1}{2}}(\Gamma_1) \hookrightarrow L^\frac{2(n-1)}{n-2}(\Gamma_1)$, the map $k(u) = |u|^\gamma u$ is not locally Lipschitz from $H^1_0(\Omega)$ into $L^2(\Gamma_1)$ for the supercritical values $\frac{1}{n-2} < \gamma \leq \frac{n-1}{n-2}$. However, by the hypothesis on $\rho$ $\Bigl(\rho \geq \frac{2(n-2)\gamma - 2}{n - (n-2)\gamma}\Bigr)$, $k(u)$ is locally Lipschitz from $H^1_0(\Omega)$ into $L^\frac{\rho+2}{\rho+1}(\Gamma_1)$.
\end{rmk}

\begin{rmk}
For $n > 3$, if $\frac{2}{n-2} \leq \gamma \leq \frac{n-1}{n-2}$, then the inequality $\rho \geq \gamma$ always holds true under the assumption $\rho \geq \frac{2(n-2)\gamma - 2}{n - (n-2)\gamma}$ (see Figure 1).
\end{rmk}

\begin{figure}[ht]
\begin{center}
\resizebox{0.8\textwidth}{!}{%
  \includegraphics{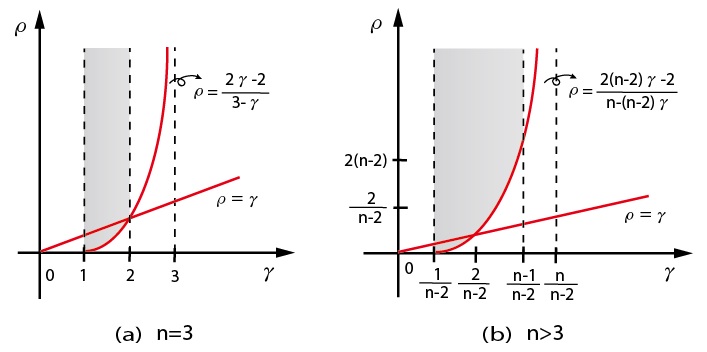}}
\caption{The admissible range of the damping parameter $\rho$ and the exponent of the source $\gamma$.} \label{}
\end{center}
\end{figure}

\begin{defn} (Weak solution).
$u(x,t)$ is called a weak solution of \eqref{1} on $\Omega \times (0,T)$ if $u \in C(0,T ; \mathcal{H}) \cap C^1 (0,T ; L^2(\Omega))$, $u_t |_{\Gamma_1} \in L^{\rho+2}(0, T ; \Gamma_1)$ and satisfies \eqref{1} in the distribution sense, i.e.,
\begin{multline*}
\int^T_0\int_\Omega -u_t \phi_t dxdt + \int_\Omega u_t \phi dx \Bigr|^T_0 + \int^T_0 \mu(t) \int_\Omega \langle \nabla_g u, \nabla_g \phi \rangle_g dx dt
\\
+ \int^T_0\int_{\Gamma_1} q(u_t) \phi d\Gamma dt -  \int^T_0\int_{\Gamma_1} h(u) \phi d\Gamma dt = \int^T_0\int_\Omega f(x,t) \phi dx dt,
\end{multline*}
for any $\phi \in C(0,T ; \mathcal{H}) \cap C^1 (0,T ; L^2(\Omega))$, $\phi |_{\Gamma_1} \in L^{\rho+2}(0, T ; \Gamma_1)$ and $u(x,0) = u_0(x) \in \mathcal{H}$, $u_t(x,0) = u_1(x) \in L^2(\Omega)$.

\end{defn}

\begin{rmk}

One easily check that $\mathcal{H} = H^1_0(\Omega)$ when $\frac{1}{n-2} < \gamma \leq \frac{2}{n-2}$. Moreover, if $n = 3$, then we can replace $\mathcal{H}$ by $H^1_0(\Omega)$, since $H^1_0(\Omega) \hookrightarrow L^{\gamma+2}(\Gamma_1)$ (see Figure 2).

\end{rmk}

\begin{figure}[ht]
\begin{center}
\resizebox{0.5\textwidth}{!}{%
  \includegraphics{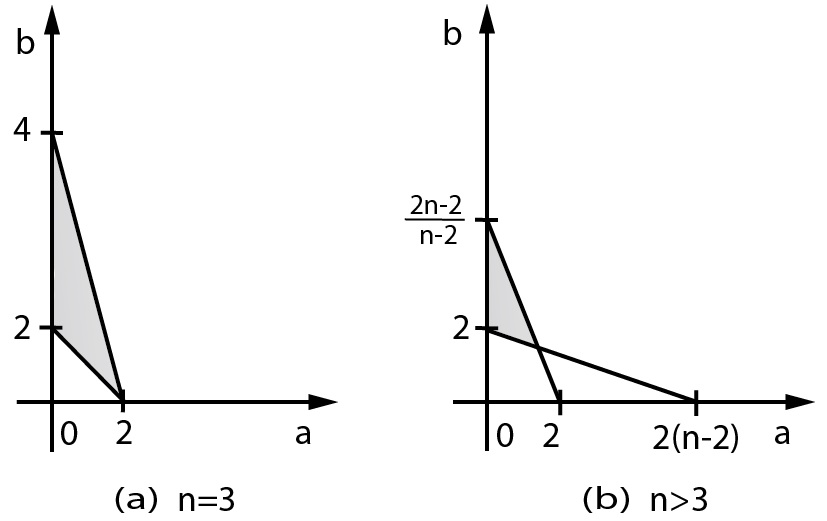}}
\caption{The admissible range of parameters $a$ and $b$ with respect to the trace imbedding $H^1_0(\Omega) \hookrightarrow L^{a\gamma + b}(\Gamma_1)$.}\label{}
\end{center}
\end{figure}

The energy associated to the problem \eqref{1} when $h(u) = |u|^\gamma u$ is given by
$$
E(t) = \frac{1}{2}||u_t(t)||^2_2 + \frac{1}{2}\mu(t)||~|\nabla_g u(t)|_g ||^2_2 - \frac{1}{\gamma+2}||u(t)||^{\gamma+2}_{\gamma+2, \Gamma_1}.
$$

We now state our main results.

\begin{thm}
Suppose that $(H_1)-(H_4)$ hold and let $h(u) = |u|^\gamma u$. Then given the initial data $(u_0,u_1) \in \mathcal{H} \times L^2(\Omega)$, there exist $T>0$ and a weak solution of problem \eqref{1}. Moreover, the following energy identity holds for all $0 \leq t \leq T$:
\begin{equation}\label{213}
E(t) + \int^t_0\int_{\Gamma_1} q(u_s)u_s d\Gamma ds - \frac{1}{2}\int^t_0 \mu'(s) ||~|\nabla_g u(s)|_g ||^2_2 ds - \int^t_0 \int_\Omega f(x,s) u_s dx ds = E(0).
\end{equation}

Furthermore, if one of the assumptions hold: $\rho \geq \gamma$ or
\begin{equation}\label{214}
E(0) < d_0 \hspace{3mm}\text{and}\hspace{3mm} ||~|\nabla_g u_0|_g ||_2 < \lambda_0
\end{equation}
with $f=0$, where
$$
\lambda_0 = \Bigl(\frac{\mu_0}{K^{\gamma+2}_0}\Bigr)^{1/\gamma} \hspace{3mm}\text{and}\hspace{3mm} d_0 = \frac{\gamma\mu_0}{2(\gamma+2)} \lambda^2_0, \hspace{3mm} K_0 = \sup_{u \in \mathcal{H}, u \neq 0} \Bigl(\frac{||u||_{\gamma+2, \Gamma_1}}{||~|\nabla_g u|_g ||_2} \Bigr).
$$
Then the solution $u(x,t)$ of \eqref{1} is global.

\end{thm}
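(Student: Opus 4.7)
The plan is to establish local existence by a Faedo--Galerkin scheme combined with a truncation of the supercritical boundary source, derive the identity \eqref{213} by testing the approximations with $u_t$, and then to obtain global existence either by a damping-dominates-source argument when $\rho\ge\gamma$, or by a potential well analysis under \eqref{214}.

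For local existence, let $\{w_j\}$ be a smooth basis of $\mathcal{H}$ and project \eqref{1} onto $V_m=\mathrm{span}\{w_1,\dots,w_m\}$. Since $h(u)=|u|^\gamma u$ is supercritical on $\Gamma_1$, I replace it by a truncation $h_k$ at level $k$ and solve the resulting Carath\'eodory ODE system for the approximants $u^{m,k}$. Testing with $u^{m,k}_t$ gives a discrete version of \eqref{213} and, via $(H_3)$, uniform bounds on $u^{m,k}$ in $L^\infty(0,T;\mathcal{H})$, on $u^{m,k}_t$ in $L^\infty(0,T;L^2(\Omega))$, and on $u^{m,k}_t|_{\Gamma_1}$ in $L^{\rho+2}(\Gamma_1\times(0,T))$. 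Aubin--Lions compactness lets one pass $m\to\infty$. The critical step is removing the truncation: by Remark 2.2 the map $u\mapsto|u|^\gamma u$ is locally Lipschitz from $H^1_0(\Omega)$ into $L^{(\rho+2)/(\rho+1)}(\Gamma_1)$, which is precisely the dual of $L^{\rho+2}(\Gamma_1)$, so the boundary source pairs legitimately with $u_t|_{\Gamma_1}\in L^{\rho+2}$ and the limit $k\to\infty$ can be identified. A standard monotonicity/Minty argument together with $(H_3)$ handles the nonlinear damping. The identity \eqref{213} then follows by using $u_t$ as a test function in the limiting weak formulation, all boundary terms being integrable by the bounds just obtained.

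For the first global result ($\rho\ge\gamma$), Young's inequality at the boundary yields
\begin{equation*}
\Bigl|\int_{\Gamma_1} h(u)u_t\,d\Gamma\Bigr|\le\varepsilon\int_{\Gamma_1}|u_t|^{\rho+2}\,d\Gamma+C_\varepsilon\|u\|^{s}_{s,\Gamma_1},\qquad s=\tfrac{(\gamma+1)(\rho+2)}{\rho+1},
\end{equation*}
and a short computation shows that $s\le \gamma+2$ is equivalent to $\rho\ge\gamma$; hence $\|u\|^s_{s,\Gamma_1}\le C(1+\|u\|^{\gamma+2}_{\gamma+2,\Gamma_1})$ by H\"older on the finite-measure set $\Gamma_1$. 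Integrating the discrete energy relation, absorbing the $\varepsilon$-term into $c_3\int|u_t|^{\rho+2}\,d\Gamma$ coming from \eqref{211}, and coupling with the identity $\|u(t)\|^{\gamma+2}_{\gamma+2,\Gamma_1}-\|u_0\|^{\gamma+2}_{\gamma+2,\Gamma_1}=(\gamma+2)\int_0^t\int_{\Gamma_1}|u|^\gamma u\,u_s\,d\Gamma\,ds$ closes the a priori estimate on $E(t)+\|u(t)\|^{\gamma+2}_{\gamma+2,\Gamma_1}$ via Gronwall.

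For the second global result ($f=0$ and \eqref{214}), \eqref{213} forces $E(t)$ to be non-increasing. Setting $J(u)=\tfrac{\mu_0}{2}\|\,|\nabla_g u|_g\,\|_2^2-\tfrac{1}{\gamma+2}\|u\|_{\gamma+2,\Gamma_1}^{\gamma+2}$, the definition of $K_0$ yields $J(u)\ge F(\|\,|\nabla_g u|_g\,\|_2)$ with $F(\lambda)=\tfrac{\mu_0}{2}\lambda^2-\tfrac{K_0^{\gamma+2}}{\gamma+2}\lambda^{\gamma+2}$, and a direct calculus check identifies $\max F=F(\lambda_0)=d_0$. A continuity/contradiction argument then gives invariance of $\{\|\,|\nabla_g u|_g\,\|_2<\lambda_0\}$: if $t_1$ were the first time at which $\|\,|\nabla_g u(t_1)|_g\,\|_2=\lambda_0$, then $E(t_1)\ge J(u(t_1))\ge d_0>E(0)\ge E(t_1)$, a contradiction. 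The resulting uniform bounds on $\|u_t\|_2$, $\|\,|\nabla_g u|_g\,\|_2$ and (via $K_0$) on $\|u\|_{\gamma+2,\Gamma_1}$ extend the local solution to $[0,\infty)$. The main obstacle throughout is the boundary limit passage in $h(u^{m,k})$: the supercriticality of $\gamma$ breaks the usual $L^2$-duality, and it is precisely the lower bound $\rho\ge\tfrac{2(n-2)\gamma-2}{n-(n-2)\gamma}$ in $(H_3)$ that puts the source into the appropriate dual space $L^{(\rho+2)/(\rho+1)}(\Gamma_1)$ and makes the truncated Galerkin scheme close.
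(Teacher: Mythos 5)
Your overall architecture matches the paper's: Faedo--Galerkin plus truncation of the boundary source for local existence (the paper actually uses a two-stage truncation --- first $h_K$ keyed to $\|\,|\nabla_g u|_g\|_2\le K$ to reduce to a globally Lipschitz $H^1_0(\Omega)\to L^2(\Gamma_1)$ source, then the cut-off $h_n=h\,\delta_n$ whose local Lipschitz constant into $L^{(\rho+2)/(\rho+1)}(\Gamma_1)$ is independent of $n$, which is what guarantees a common existence time when the truncation is removed); a Young/Gronwall absorption for $\rho\ge\gamma$ (your splitting with exponent $s=\tfrac{(\gamma+1)(\rho+2)}{\rho+1}\le\gamma+2$ is equivalent to the paper's use of the imbedding $L^{\rho+2}(\Gamma_1)\hookrightarrow L^{\gamma+2}(\Gamma_1)$); and a potential well argument for \eqref{214}, where your first-exit-time contradiction $E(t_1)\ge J(u(t_1))\ge d_0>E(0)\ge E(t_1)$ is in fact a cleaner version of the paper's Lemma~4.1.

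The genuine gap is the energy identity \eqref{213}. You propose to obtain it ``by using $u_t$ as a test function in the limiting weak formulation, all boundary terms being integrable by the bounds just obtained.'' Integrability of the boundary pairings is not the obstruction: the obstruction is that $u_t$ is not an admissible test function in the sense of Definition~2.1, since a test function must lie in $C(0,T;\mathcal{H})$ (in particular have a spatial gradient in $L^2(\Omega)$ and a trace in $L^{\gamma+2}(\Gamma_1)$), whereas the weak solution only gives $u_t\in C(0,T;L^2(\Omega))$ with $u_t|_{\Gamma_1}\in L^{\rho+2}$; the term $\int\mu(t)\langle\nabla_g u,\nabla_g u_t\rangle_g\,dx$ is therefore not defined. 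Nor does testing the approximations with $u^{m}_t$ and passing to the limit yield the identity: the quadratic terms $\|u^m_t(t)\|_2^2$, $\|\,|\nabla_g u^m(t)|_g\|_2^2$ and the monotone term $\int_0^t\int_{\Gamma_1}q(u^m_s)u^m_s$ are only weakly lower semicontinuous, so without additional strong convergences one obtains an energy \emph{inequality}, not the identity claimed in the theorem. The paper explicitly flags this and repairs it by applying the resolvent smoothing $T^\epsilon=(I-\epsilon L)^{-1}$ to the equation, multiplying by $u^\epsilon_t=T^\epsilon u_t$, and letting $\epsilon\to0$ using the contraction and convergence properties of $T^\epsilon$ on $L^2(\Omega)$, $H^1_0(\Omega)$ and $L^p(\Gamma_1)$ (Lemma~3.2). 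Some such mollification-in-space argument is needed for your proof to close; everything else in your outline is sound.
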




\begin{thm}

Suppose that the hypotheses in Theorem 2.1 and \eqref{214} with $\rho \leq \gamma$ and $f =0$ hold. Then we have following energy decay rates:
\begin{enumerate}[(i)]

\item \emph{\textbf{Case 1}} : $\beta$ is linear. Then we have

$$
E(t) \leq C_1 e^{-\omega t},
$$
where $\omega$ is a positive constant.

\item \emph{\textbf{Case 2}} : $\beta$ has polynomial growth near zero, that is, $\beta(s) = s^{\rho+1}$. Then we have

$$
E(t) \leq \frac{C_2}{(1+t)^\frac{2}{\rho}}.
$$

\item \emph{\textbf{Case 3}} : $\beta$ does not necessarily have polynomial growth near zero. Then we have

$$
E(t) \leq C_3 \Bigl(F^{-1}\Bigl(\frac{1}{t}\Bigr) \Bigr)^2,
$$
where $F(s) = s \beta(s)$ and $C_i$ ($i = 1, 2, 3$) are positive constants that depends only on $E(0)$.

\end{enumerate}

\end{thm}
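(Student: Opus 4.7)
The plan is to combine a Riemannian-geometry multiplier estimate with the potential-well method, and then apply the integral inequality technique of Martinez \cite{mart}. I start with two preliminary facts. From the energy identity \eqref{213} with $f=0$, the monotonicity of $q$ and $\mu'\le 0$ give
\[
E'(t) = -\int_{\Gamma_1} q(u_t)u_t\,d\Gamma + \tfrac{1}{2}\mu'(t)\,\|\,|\nabla_g u|_g\,\|_2^2 \;\le\; 0,
\]
so $E$ is nonincreasing. A continuity argument based on the definitions of $\lambda_0$, $d_0$, $K_0$ and on \eqref{214} shows that the potential well is invariant, i.e.\ $\|\,|\nabla_g u(t)|_g\,\|_2 < \lambda_0$ for all $t\ge 0$; using the identity $K_0^{\gamma+2}\lambda_0^{\gamma}=\mu_0$ this yields coercivity
\[
E(t) \;\ge\; c_0\bigl(\|u_t(t)\|_2^2 + \|\,|\nabla_g u(t)|_g\,\|_2^2\bigr)\quad\text{and}\quad \|u(t)\|_{\gamma+2,\Gamma_1}^{\gamma+2} \le C\,E(t),
\]
for positive constants $c_0,C$.

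Next, I carry out the Riemannian multiplier estimate. Multiplying the equation by $H(u)=\langle\nabla_g u,H\rangle_g$, where $H$ is the vector field furnished by Hypothesis $(H_1)$, integrating over $\Omega\times(S,T)$, and applying Lemma 2.1 together with \eqref{205}, I obtain an interior contribution bounded below by $\sigma\int_S^T\!\!\int_\Omega|\nabla_g u|_g^2\,dx\,dt$. The boundary term on $\Gamma_0$ has a favorable sign, since $u=0$ there forces $\nabla_g u$ to be parallel to $\nu$ and $H\cdot\nu\le 0$ by \eqref{206}. On $\Gamma_1$, where $H\cdot\nu\ge\delta>0$, the remaining terms are rewritten using $\mu(t)\,\partial u/\partial\nu_L = h(u)-q(u_t)$. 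Combining with the equipartition identity produced by the multiplier $u$ and the coercivity above, I expect
\[
\int_S^T E(t)\,dt \;\le\; C\,E(S) + C\!\int_S^T\!\!\int_{\Gamma_1}\!\bigl(u_t^2+q(u_t)^2+|h(u)||u_t|\bigr)\,d\Gamma\,dt,
\]
valid for all $0\le S<T$.

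To absorb the boundary terms into $-E'$, I split $\Sigma=\Gamma_1\times(S,T)$ into $\Sigma_1=\{|u_t|\le 1\}$ and $\Sigma_2=\{|u_t|>1\}$. On $\Sigma_2$ the polynomial bound \eqref{211}, the coercivity above, and the assumption $\rho\le\gamma$ allow Young's inequality to dominate every contribution by a constant multiple of $\int\!\!\int_{\Sigma_2}q(u_t)u_t\,d\Gamma\,dt$. On $\Sigma_1$, I use \eqref{210} to construct a concave, strictly increasing function $\Lambda$ with $\Lambda(s\beta(s))\ge s^2+\beta^{-1}(s)^2$ near zero, so that by Jensen's inequality
\[
\int_{\Sigma_1}\bigl(u_t^2+q(u_t)^2\bigr)\,d\Gamma\,dt \;\le\; |\Sigma_1|\,\Lambda\!\left(\tfrac{1}{|\Sigma_1|}\!\int_{\Sigma_1}\!q(u_t)u_t\,d\Gamma\,dt\right).
\]
Collecting everything yields the key integral inequality
\[
\int_S^T E(t)\,dt \;\le\; C\,E(S) + C\,(T-S)\,\Lambda\!\bigl(E(S)-E(T)\bigr),\qquad 0\le S<T.
\]

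Finally, I apply Martinez's integral inequality lemma \cite{mart}. In \textbf{Case 1}, $\beta$ linear makes $\Lambda$ linear, so the inequality collapses to $\int_S^T E\,dt\le CE(S)$ and a standard argument yields exponential decay. In \textbf{Case 2}, $\beta(s)=s^{\rho+1}$ produces $\Lambda(s)\sim s^{2/(\rho+2)}$, and Martinez's lemma delivers the algebraic rate $E(t)\le C(1+t)^{-2/\rho}$. In \textbf{Case 3}, with $F(s)=s\beta(s)$ as in the statement, the general form of Martinez's result provides $E(t)\le C\bigl(F^{-1}(1/t)\bigr)^2$. The main obstacle is the supercritical boundary source: the term $\int_{\Gamma_1}|u|^\gamma u\,H(u)\,d\Gamma$ arising in the multiplier computation is not controlled by $H^1_0(\Omega)$ alone when $\gamma>1/(n-2)$, and it is precisely the potential-well assumption \eqref{214} (which keeps $\|u\|_{\gamma+2,\Gamma_1}$ dominated by $E$) together with $\rho\le\gamma$ (so that the damping on $\Sigma_2$ can swallow the source via Young's inequality) that makes the argument close.
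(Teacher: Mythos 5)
Your overall architecture (potential well for coercivity, the Riemannian multiplier $H(u)$ together with the equipartition multiplier $u$, a splitting of $\Gamma_1\times(S,T)$ by $|u_t|\lessgtr 1$, and an integral inequality for $E$) is the right family of ideas, but two steps do not close as written. First, the boundary term $\iint_{\Sigma_2}q(u_t)^2\,d\Gamma\,dt$ on the set $\{|u_t|>1\}$ is \emph{not} dominated by the dissipation: by \eqref{211} one has $q(u_t)^2\sim|u_t|^{2\rho+2}$ while $q(u_t)u_t\sim|u_t|^{\rho+2}$, and $2\rho+2>\rho+2$ for $\rho>0$, so no constant multiple of $\int q(u_t)u_t$ controls it. This is why the paper never produces a $q(u_t)^2$ term: the product $q(u_t)\,H(u)$ is split by Young's inequality with the conjugate exponents $\frac{\rho+2}{\rho+1}$ and $\rho+2$, which yields $|q(u_t)|^{\frac{\rho+2}{\rho+1}}=|q(u_t)|^{\frac{1}{\rho+1}}|q(u_t)|\le c_4^{\frac{1}{\rho+1}}\,u_tq(u_t)$ on $\{|u_t|>1\}$ (see \eqref{415}), at the price of an $\epsilon\int_{\Gamma_1}|H(u)|^{\rho+2}d\Gamma$ remainder; your exponent $2$ must be replaced by $\frac{\rho+2}{\rho+1}$ (and by $\frac{\alpha}{\alpha-1}$ for the pairing $q(u_t)\,u$, which is where the hypothesis $\rho\le\gamma$ actually enters, via $\frac{\rho+1}{\alpha-1}\le1$ in \eqref{417}).

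Second, your concluding inequality $\int_S^TE\,dt\le CE(S)+C(T-S)\Lambda\bigl(E(S)-E(T)\bigr)$ is of Lasiecka--Tataru type and is not of the form accepted by Martinez's lemmas (Lemmas 5.1 and 5.2 of the paper), which require $\int_S^{+\infty}E^{1+\sigma}(t)\phi'(t)\,dt\le\frac{1}{\omega}E^\sigma(0)E(S)$ (plus, for Lemma 5.2, a term decaying in $\phi(S)$): the unbounded factor $(T-S)$ and the nonlinear dependence on the energy drop $E(S)-E(T)$ prevent you from letting $T\to+\infty$. From your inequality one would have to fix the interval length and iterate a discrete recursion governed by an ODE comparison, and it is not shown that this reproduces the stated rates, in particular $E(t)\le C\bigl(F^{-1}(1/t)\bigr)^2$ in Case 3. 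The paper instead builds the weight into the multiplier from the outset, testing with $E^p(t)\phi'(t)\,\mathcal{M}u$, $\mathcal{M}u=2H(u)+(\operatorname{div}H-\sigma)u$, so as to arrive at $\int_S^TE^{p+1}\phi'\,dt\le CE^{p+1}(S)+(\text{boundary terms})$ as in \eqref{411}; it then closes Case 1 with $p=0$ and $\phi(t)=mt$, Case 2 with $p=\rho/2$ and a H\"older argument with exponent $\frac{2}{\rho+2}$, and Case 3 by partitioning $\Gamma_1$ according to $|u_t|\lessgtr\phi'(t)$ and choosing $\phi=\psi^{-1}$ with $\psi(t)=1+\int_1^t\beta(1/s)^{-1}\,ds$. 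It is precisely this $\phi'$-dependent partition and choice of $\phi$ that produce the explicit $F^{-1}(1/t)$ rate; you would need either to adopt that weighted scheme or to supply the full fixed-length iteration and ODE comparison and verify that it yields the same three rates.
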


\begin{thm}
Suppose that hypotheses $(H_1)-(H_4)$ with $\rho < \gamma$ and $f=0$ hold. Moreover, assume that
$$
(u_0, u_1) \in \{(u_0,u_1) \in \mathcal{H}\times L^2(\Omega); ||~|\nabla_g u_0|_g ||_2 > \lambda_0,~ -1 < E(0) < d_0 \}
$$
and
\begin{equation}\label{215}
\beta^{-1}(1) \leq \Bigl( \frac{(\gamma+2) (\mu_0 \gamma \lambda^2_0 - 2 (\gamma+2)E_1)^2}{8(\gamma+1)meas(\Gamma_1) (\mu_0\lambda^2_0 - 2E_1)} \Bigr)^\frac{\gamma+1}{\gamma+2},
\end{equation}
where
$$
E_1 =
\begin{cases}
0 \hspace{3mm}&\text{if}\hspace{3mm} E(0) < 0,
\\
\text{positive constant satisfying}\hspace{3mm} E(0) < E_1 < d_0 \hspace{3mm}\text{and}\hspace{3mm} E_1 < E(0) + 1 \hspace{3mm}&\text{if}\hspace{3mm} E(0) \geq 0.
\end{cases}
$$
Then the weak solution of the problem \eqref{1} blows up in finite time.
\end{thm}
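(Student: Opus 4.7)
The plan is to carry out the Georgiev--Todorova / Messaoudi blow-up scheme, adapted to boundary damping with a supercritical source. Set $H(t) = E_1 - E(t)$; by the energy identity \eqref{213} with $f=0$, together with $\mu'(t)\le 0$ and nonnegativity of $q(u_t)u_t$, we have $H'(t) \ge 0$. The case-split definition of $E_1$ together with the a priori bound $E(t)\le E(0)$ ensures $0 < H(0) \le H(t)$ throughout the life-span.

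The first substantive step will be invariance of the unstable set, i.e., $||~|\nabla_g u(t)|_g||_2 > \lambda_0$ for all $t$. If not, a first time $t^\star$ of equality exists; using the definition of $K_0$ and the explicit forms of $\lambda_0$ and $d_0$, one gets $E(t^\star) \ge \tfrac{1}{2}\mu_0\lambda_0^2 - \tfrac{1}{\gamma+2}K_0^{\gamma+2}\lambda_0^{\gamma+2} = d_0$, contradicting $E(t)\le E_1 < d_0$. Combining this invariance with the identity $\gamma\mu_0\lambda_0^2 = 2(\gamma+2)d_0$ and the definition of $E(t)$ yields the key coercivity
\[
||u(t)||_{\gamma+2,\Gamma_1}^{\gamma+2} - \mu(t)||~|\nabla_g u(t)|_g||_2^2 \ge (\gamma+2) H(t) + (\gamma+2)(d_0 - E_1) + \tfrac{\gamma+2}{2}||u_t(t)||_2^2.
\]

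Next I introduce the modulated functional
\[
L(t) = H^{1-\alpha}(t) + \varepsilon \int_\Omega u(x,t) u_t(x,t)\, dx,
\]
for small $\alpha \in (0, \gamma/(2(\gamma+2)))$ and $\varepsilon>0$ to be fixed later. Differentiating and using the weak form of \eqref{1} with $u$ as test function together with the boundary condition on $\Gamma_1$, I obtain
\[
L'(t) = (1-\alpha) H^{-\alpha}(t) H'(t) + \varepsilon||u_t||_2^2 + \varepsilon\bigl(||u||_{\gamma+2,\Gamma_1}^{\gamma+2} - \mu(t)||~|\nabla_g u|_g||_2^2\bigr) - \varepsilon\int_{\Gamma_1} q(u_t) u\, d\Gamma.
\]
The bracket is handled by the coercivity above; the main obstacle is the last integral. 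Splitting $\Gamma_1$ according to whether $|u_t|\le 1$ or $|u_t|>1$, I apply $|q(s)| \le c_4|s|^{\rho+1}$ together with H\"older at exponent $\gamma+2$ on the large-velocity set (where $\rho \le \gamma$ is precisely what makes the conjugate exponent compatible with $H'(t) \ge c_3 \int_{\{|u_t|>1\}}|u_t|^{\rho+2}\, d\Gamma$), and $|q(s)| \le \beta^{-1}(1)$ on the small-velocity set. Two applications of Young's inequality then bound the integral by $\eta_1||u||^{\gamma+2}_{\gamma+2,\Gamma_1} + C_{\eta_1} H'(t) + C_{\eta_2}[\beta^{-1}(1)]^{(\gamma+2)/(\gamma+1)} meas(\Gamma_1)$.

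Assumption \eqref{215} is calibrated so that, after optimising $\eta_2$, the residual constant $C_{\eta_2}[\beta^{-1}(1)]^{(\gamma+2)/(\gamma+1)} meas(\Gamma_1)$ is dominated by the margin $(\gamma+2)(d_0 - E_1) = \tfrac{1}{2}(\mu_0\gamma\lambda_0^2 - 2(\gamma+2)E_1)$ left over from the coercivity, explaining the precise form of \eqref{215}. Fixing $\eta_1$ small and then $\varepsilon$ small enough that $\varepsilon C_{\eta_1}\le (1-\alpha)$ makes the $H'$ contribution absorbable into $(1-\alpha)H^{-\alpha}(t)H'(t)$, yielding $L'(t) \ge C\bigl(H(t) + ||u_t||_2^2 + ||u||^{\gamma+2}_{\gamma+2,\Gamma_1}\bigr)$. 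A standard interpolation bootstrap (Cauchy--Schwarz on $\int u u_t$, the embedding $\mathcal{H} \hookrightarrow L^2(\Omega)$, and smallness of $\alpha$) upgrades this to $L'(t) \ge \xi\,L(t)^{1/(1-\alpha)}$, with $L(0)>0$ secured by the hypothesis. Integrating forces $L$ to blow up in finite time, whence the solution cannot be extended globally. The most delicate bookkeeping will be the simultaneous choice of $\alpha$, $\eta_1$, $\eta_2$, and $\varepsilon$ compatibly with \eqref{215}; the strict inequality $\rho<\gamma$ (rather than $\rho\le\gamma$) provides the extra Young margin needed for the interpolation step.
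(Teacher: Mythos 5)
Your overall architecture coincides with the paper's: invariance of the unstable region $\{||~|\nabla_g u|_g ||_2>\lambda_0\}$ proved by the same continuity/contradiction argument with $j(\lambda)$ (the paper's Lemma 6.1), the functional $G(t)=E_1-E(t)$ with $0<G(0)<1$, the perturbed quantity $G^{1-\overline{\chi}}(t)+\tau\int_\Omega u u_t\,dx$, the splitting of $\int_{\Gamma_1}q(u_t)u\,d\Gamma$ over $\{|u_t|\le 1\}$ and $\{|u_t|>1\}$, the reading of \eqref{215} as a nonnegative-discriminant condition letting the margin $\frac{\gamma\mu_0\lambda_0^2}{2}-(\gamma+2)E_1=(\gamma+2)(d_0-E_1)$ dominate the residual constant produced by the small-velocity set (the paper's Lemma 6.3, a quadratic $P(\epsilon_8)$ in the perturbation of $\theta=\gamma+2-\epsilon_8$), and the final ODE inequality $M'(t)\ge C M^{1/(1-\overline{\chi})}(t)$.

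There is, however, a genuine gap at the single most delicate step. On $\{|u_t|>1\}$ you bound the damping integral by $\eta_1||u||^{\gamma+2}_{\gamma+2,\Gamma_1}+C_{\eta_1}H'(t)+\cdots$ with a \emph{constant} $C_{\eta_1}$, and then claim that choosing $\varepsilon C_{\eta_1}\le 1-\alpha$ absorbs $\varepsilon C_{\eta_1}H'(t)$ into $(1-\alpha)H^{-\alpha}(t)H'(t)$. That absorption requires $H^{-\alpha}(t)\ge \varepsilon C_{\eta_1}/(1-\alpha)$ for all $t$, i.e.\ an a priori upper bound on $H(t)$ --- which is unavailable, and is exactly what the theorem denies, since $H$ is expected to blow up; the known bound $H(t)\ge H(0)>0$ only controls $H^{-\alpha}(t)$ from above, which is useless here. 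The paper avoids this by inserting the weight $||u||^{-(\gamma+2)\chi}_{\gamma+2,\Gamma_1}$ \emph{before} applying Young (see \eqref{517}), then using $||u||^{\gamma+2}_{\gamma+2,\Gamma_1}\ge(\gamma+2)G(t)\ge(\gamma+2)G_0$ so that the $G'(t)$ contribution appears as $C\,G'(t)\,G_0^{\overline{\chi}-\chi}G^{-\overline{\chi}}(t)$, which is absorbable into $(1-\overline{\chi})G^{-\overline{\chi}}(t)G'(t)$ uniformly in $t$ once $\tau$ is small. The price of that weight is the extra power $||u||^{(\gamma+2)(\chi+\frac{1}{\gamma+2})(\rho+2)}_{\gamma+2,\Gamma_1}$, dominated by $G_0^{-1}||u||^{\gamma+2}_{\gamma+2,\Gamma_1}$ only because $(\chi+\frac{1}{\gamma+2})(\rho+2)<1$; this is where the restriction $0<\chi<\frac{\gamma-\rho}{(\rho+2)(\gamma+2)}$, and hence the strict inequality $\rho<\gamma$, is actually consumed --- not in the final interpolation step, where only $\overline{\chi}<\frac{\gamma}{2(\gamma+2)}$ is needed. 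Without this weighting your differential inequality for $L$ does not close.
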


\section{ {\bf Proof of Theorem 2.1 : local existence} }
\setcounter{equation}{0}

This section is devoted to prove the local existence in Theorem 2.1. The proof is based on three steps according to the following condition of the source term $h$:
\begin{enumerate}
\item Existence of the global solution when $h$ is globally Lipschitz from $H^1_0(\Omega)$ to $L^2(\Gamma_1)$.
\item Existence of the local solution when $h$ is locally Lipschitz from $H^1_0(\Omega)$ to $L^2(\Gamma_1)$.
\item Existence of the local solution when $h$ is locally Lipschitz from $H^1_0(\Omega)$ to $L^\frac{\rho+2}{\rho+1}(\Gamma_1)$.
\end{enumerate}

Then since the mapping $|u|^\gamma u$ is locally Lipschitz from $H^1_0(\Omega)$ to $L^\frac{\rho+2}{\rho+1}(\Gamma_1)$ (see Remark 2.2), the existence of the local solution can be guaranteed even if $h(u) = |u|^\gamma u$.

\subsection{Globally Lipschitz source}

We first deal with the case where the source $h$ is globally Lipschitz from $H^1_0(\Omega)$ to $L^2(\Gamma_1)$. In this case, we have the following result.

\begin{prop}
Assume that $(H_1)-(H_3)$ hold. In addition, assume that  $(u_0,u_1) \in \mathcal{H} \times L^2(\Omega)$ and $h : H^1_0(\Omega) \rightarrow L^2(\Gamma_1)$ is globally Lipschitz continuous. Then problem \eqref{1} has a unique global solutions $u \in C(0,T ; \mathcal{H}) \cap C^1 (0,T ; L^2(\Omega))$ for arbitrary $T > 0$.
\end{prop}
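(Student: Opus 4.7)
The plan is to apply the Faedo--Galerkin method in the space $\mathcal{H}$. First, fix a Hilbert basis $\{w_j\}_{j\geq 1}$ that is dense in $\mathcal{H}$ and orthogonal in $L^2(\Omega)$, composed of functions smooth enough that their boundary traces are classical. Setting $V_m=\mathrm{span}\{w_1,\ldots,w_m\}$, I look for $u^m(t)=\sum_{j=1}^m c^m_j(t)\,w_j\in V_m$ satisfying, for each $j\le m$,
\begin{equation*}
\langle u^m_{tt},w_j\rangle+\mu(t)\,\langle \nabla_g u^m,\nabla_g w_j\rangle_g+\langle q(u^m_t),w_j\rangle_{\Gamma_1}-\langle h(u^m),w_j\rangle_{\Gamma_1}=\langle f,w_j\rangle,
\end{equation*}
with projected initial data $u^m(0)\to u_0$ in $\mathcal{H}$ and $u^m_t(0)\to u_1$ in $L^2(\Omega)$. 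Since $q\in C^1$ and $h:H^1_0(\Omega)\to L^2(\Gamma_1)$ is globally Lipschitz, the right-hand side of the resulting ODE system in $(c^m_j)$ is locally Lipschitz in the coefficients, so Picard--Lindel\"of yields a unique $C^2$ local solution on some $[0,t_m)$.

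Second, I would derive $m$-uniform a priori bounds. Multiplying the $j$-th equation by $(c^m_j)'(t)$ and summing gives the Galerkin energy identity
\begin{equation*}
E^m(t)+\int_0^t\!\!\int_{\Gamma_1}q(u^m_s)u^m_s\,d\Gamma\,ds-\tfrac12\int_0^t\mu'(s)\|\,|\nabla_g u^m|_g\,\|_2^2\,ds=E^m(0)+\int_0^t\!\!\int_{\Gamma_1}h(u^m)u^m_s\,d\Gamma\,ds+\int_0^t\langle f,u^m_s\rangle\,ds,
\end{equation*}
where $E^m(t)=\tfrac12\|u^m_t\|_2^2+\tfrac12\mu(t)\|\,|\nabla_g u^m|_g\,\|_2^2$. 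Using $\mu'\le 0$, $f\in L^2(0,\infty;L^2(\Omega))$, the global Lipschitz estimate $\|h(u^m)\|_{2,\Gamma_1}\le L\|u^m\|_{H^1_0}+\|h(0)\|_{2,\Gamma_1}$, the lower bounds $|\beta(s)|\le|q(s)|$ near zero and $c_3|s|^{\rho+1}\le|q(s)|$ at infinity from \eqref{210}--\eqref{211}, and Gronwall's inequality, I obtain, for every $T>0$,
\begin{equation*}
\|u^m\|_{L^\infty(0,T;\mathcal{H})}+\|u^m_t\|_{L^\infty(0,T;L^2(\Omega))}+\|u^m_t\|_{L^{\rho+2}(\Gamma_1\times(0,T))}\le C(T),
\end{equation*}
uniformly in $m$; in particular the Galerkin solutions extend to all of $[0,T]$, and $\{q(u^m_t)\}$ is bounded in $L^{(\rho+2)/(\rho+1)}(\Gamma_1\times(0,T))$.

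Third, I pass to the limit. After extracting subsequences, $u^m\rightharpoonup^* u$ in $L^\infty(0,T;\mathcal{H})$, $u^m_t\rightharpoonup^* u_t$ in $L^\infty(0,T;L^2(\Omega))$, and $q(u^m_t)\rightharpoonup\chi$ in $L^{(\rho+2)/(\rho+1)}(\Gamma_1\times(0,T))$. Aubin--Lions combined with the trace theorem gives $u^m\to u$ strongly in $C([0,T];L^2(\Omega))$ and in $L^2(0,T;L^2(\Gamma_1))$, which together with the global Lipschitz property of $h$ suffices to pass to the limit in the source term. \emph{The main obstacle is identifying} $\chi=q(u_t)$, since no strong convergence of $u^m_t$ on $\Gamma_1$ is available. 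I would resolve this by Minty's monotonicity trick: for every $\phi\in L^{\rho+2}(\Gamma_1\times(0,T))$, monotonicity of $q$ yields
\begin{equation*}
\int_0^T\!\!\int_{\Gamma_1}(q(u^m_t)-q(\phi))(u^m_t-\phi)\,d\Gamma\,dt\ge 0,
\end{equation*}
and the Galerkin energy identity (together with weak lower semicontinuity of $E^m$) lets me pass to $\limsup$ in the cross term $\int_0^T\!\!\int_{\Gamma_1}q(u^m_t)u^m_t$ and deduce $\int_0^T\!\!\int_{\Gamma_1}(\chi-q(\phi))(u_t-\phi)\,d\Gamma\,dt\ge 0$ for every $\phi$, hence $\chi=q(u_t)$. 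The energy identity \eqref{213} for the limit then follows from the equality case.

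Finally, uniqueness is obtained from the energy estimate on the difference $w=u-v$ of two solutions with identical data: monotonicity of $q$ removes the boundary damping contribution, the global Lipschitz bound on $h$ controls the source by $\|w\|_{H^1_0}$, and Gronwall's inequality forces $w\equiv 0$. Time continuity of $u$ in $\mathcal{H}$ and of $u_t$ in $L^2(\Omega)$ is a standard consequence of the energy identity.
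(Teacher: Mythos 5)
Your proposal is correct and follows essentially the same route as the paper: a Faedo--Galerkin scheme in $V_m$, uniform energy estimates via the lower bounds on $q$ and the global Lipschitz bound on $h$, weak limits plus Aubin--Lions for the source, the monotonicity (Minty) argument to identify the weak limit of $q(u^m_t)$ with $q(u_t)$, and Gronwall on the difference of two solutions for uniqueness. The only cosmetic difference is that the paper inserts an additional vanishing boundary viscosity term $\eta\int_{\Gamma_1}u^{\eta m}_t w\,d\Gamma$ and passes to the limit in $m$ and then in $\eta$, and it obtains the cross term $\lim\int_0^T\langle q(u^{\eta m}_t),u^{\eta m}_t\rangle_{\Gamma_1}\,dt$ by testing with $u^{\eta m}$ and $u^{\eta m}_t$ rather than by lower semicontinuity of the energy; neither change affects the substance of the argument.
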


Our goal in this subsection is to show the local existence result for problem \eqref{1}. We construct an approximate solution by using the Faedo-Galerkin method. Let $\{w_j\}_{j \in \mathbb{N}}$ be a basis in $H^1_0(\Omega)$ and define $V_m = span\{w_1, w_2, \cdots, w_m \}$. Let $u^m_0$ and $u^m_1$ be sequences of $V_m$ such that $u^m_0 \rightarrow u_0$ strongly in $H^1_0(\Omega)$ and $u^m_1 \rightarrow u_1$ strongly in $L^2(\Omega)$. We search for a function, for each $\eta \in (0, 1)$ and $m \in \mathbb{N}$,
\begin{equation*}
u^{\eta m}(t) = \sum^m_{j=1} \delta^{jm}(t)w_j
\end{equation*}
satisfying the approximate perturbed equation
\begin{equation}\label{301}
\begin{cases}
\vspace{3mm} \int_\Omega u^{\eta m}_{tt} w dx + \mu(t) \int_\Omega \langle \nabla_g u^{\eta m}, \nabla_g w \rangle_g dx + \eta \int_{\Gamma_1} u^{\eta m}_t w d\Gamma
\\
\vspace{3mm} \hspace{10mm} + \int_{\Gamma_1} q(u^{\eta m}_t) w d\Gamma - \int_{\Gamma_1} h(u^{\eta m}) w d\Gamma = \int_\Omega f(t) w dx,\hspace{3mm} w \in V_m,
\\
\vspace{3mm} u^{\eta m}_0 = \sum^m_{j=1} \langle u_0, w_j \rangle w_j, \hspace{3mm} u^{\eta m}_1 = \sum^m_{j=1} \langle u_1, w_j \rangle w_j.
\end{cases}
\end{equation}

Since \eqref{301} is a normal system of ordinary differential equations, there exist $u^{\eta m}$, solutions to problem \eqref{301}. A solution $u$ to problem \eqref{1} on some internal $[0, t_m)$, $t_m \in (0, T]$ will be obtain as the limit of $u^{\eta m}$ as $m \rightarrow \infty$ and $\eta \rightarrow 0$. Next, we show that $t_m = T$ and the local solution is uniformly bounded independent of $m$, $\eta$ and $t$. For this purpose, let us replace $w$ by $u^{\eta m}_t$ in \eqref{301} we obtain
\begin{equation}\label{302}
\begin{aligned}
&\frac{d}{dt} \Biggl[ \frac{1}{2}||u^{\eta m}_t||^2_2 + \frac{1}{2}\mu(t)|| ~|\nabla_g u^{\eta m}|_g||^2_2 + \frac{1}{\gamma+2}||u^{\eta m}||^{\gamma+2}_{\gamma+2, \Gamma_1} \Biggr] + \eta ||u^{\eta m}_t||^2_{2,\Gamma_1} + \int_{\Gamma_1} q(u^{\eta m}_t)~u^{\eta m}_t d\Gamma
\\
& = \frac{1}{2} \mu'(t) || ~|\nabla_g u^{\eta m}|_g||^2_2 + \int_{\Gamma_1} h(u^{\eta m}) u^{\eta m}_t d\Gamma +  \int_{\Gamma_1} |u^{\eta m}|^\gamma u^{\eta m} u^{\eta m}_t d\Gamma + \int_\Omega f(t) u^{\eta m}_t dx.
\end{aligned}
\end{equation}

We will now estimate $\int_{\Gamma_1} q(u^{\eta m}_t)~u^{\eta m}_t d\Gamma$, $\int_{\Gamma_1} |u^{\eta m}|^\gamma u^{\eta m} u^{\eta m}_t d\Gamma$ and $\int_{\Gamma_1} h(u^{\eta m}) u^{\eta m}_t d\Gamma$. From the hypotheses on $q$, we have
\begin{equation}\label{303}
\begin{aligned}
\int_{\Gamma_1} q(u^{\eta m}_t)~u^{\eta m}_t d\Gamma & = \int_{|u^{\eta m}_t| \leq 1} q(u^{\eta m}_t)~u^{\eta m}_t d\Gamma + \int_{|u^{\eta m}_t| > 1} q(u^{\eta m}_t)~u^{\eta m}_t d\Gamma
\\
& \geq \int_{|u^{\eta m}_t(t)| > 1} q(u^{\eta m}_t)~u^{\eta m}_t d\Gamma
\\
& \geq c_3 \int_{\Gamma_1} |u^{\eta m}_t|^{\rho+2} d\Gamma - c_3 \int_{|u^{\eta m}_t| \leq 1} |u^{\eta m}_t|^{\rho+2} d\Gamma
\\
& \geq c_3 ||u^{\eta m}_t||^{\rho+2}_{\rho+2, \Gamma_1} - c_3 meas(\Gamma_1).
\end{aligned}
\end{equation}

From the assumption $\rho \geq \frac{2(n-2)\gamma - 2}{n - (n-2)\gamma}$, we have the imbedding $H^1_0(\Omega)\hookrightarrow L^\frac{2(n-1)}{n-2}(\Gamma_1) \hookrightarrow L^{(\gamma+1)\frac{\rho+2}{\rho+1}}(\Gamma_1)$, so that by Young's inequality with $\frac{\rho+1}{\rho+2} + \frac{1}{\rho+2} = 1$ we deduce that
\begin{equation}\label{304*}
\begin{aligned}
\int_{\Gamma_1} |u^{\eta m}|^\gamma u^{\eta m} u^{\eta m}_t d\Gamma &\leq C(\epsilon_0) ||u^{\eta m}||^{\gamma+1}_{(\gamma+1)\frac{\rho+2}{\rho+1}, \Gamma_1} + \epsilon_0 ||u^{\eta m}_t||^{\rho+2}_{\rho+2, \Gamma_1}
\\
& \leq C(\epsilon_0) || ~|\nabla_g u^{\eta m}|_g||^{\gamma+1}_2 + \epsilon_0 ||u^{\eta m}_t||^{\rho+2}_{\rho+2, \Gamma_1}
\\
&\leq C(\epsilon) \bigl(1 + || ~|\nabla_g u^{\eta m}|_g||_2\bigr)^2 + \epsilon_0 ||u^{\eta m}_t||^{\rho+2}_{\rho+2, \Gamma_1}
\\
&\leq C(\epsilon) \bigl(1 + || ~|\nabla_g u^{\eta m}|_g||^2_2\bigr) + \epsilon_0 ||u^{\eta m}_t||^{\rho+2}_{\rho+2, \Gamma_1}.
\end{aligned}
\end{equation}

Under the assumption that $h$ is globally Lipschitz from $H^1_0(\Omega)$ into $L^2(\Gamma_1)$ we have
$$
||h(u^{\eta m})||_{2,\Gamma_1} \leq ||h(u^{\eta m}) - h(0)||_{2,\Gamma_1} + ||h(0)||_{2, \Gamma_1} \leq L_h||\nabla u^{\eta m}||_2 + ||h(0)||_{2,\Gamma_1} \leq C_4 (||\nabla u^{\eta m}||_2 + 1),
$$
where $L_h$ is the Lipschitz constant and $C_4$ is for some positive constant, so that by H\"{o}lder's and Young's inequalities and from the fact \eqref{201} and the imbedding $L^{\rho+2}(\Gamma_1) \hookrightarrow L^2(\Gamma_1)$, we deduce that
\begin{equation}\label{304}
\begin{aligned}
\Bigl|\int_{\Gamma_1} h(u^{\eta m}) u^{\eta m}_t d\Gamma \Bigr|&\leq C(\epsilon)||h(u^{\eta m})||^2_{2,\Gamma_1} + \epsilon ||u^{\eta m}_t||^2_{2,\Gamma_1}
\\
&\leq C(\epsilon) \Bigl(\frac{1}{c_1}  || ~|\nabla_g u^{\eta m}|_g||^2_2 + 1\Bigr) + \epsilon ||u^{\eta m}_t||^2_{2,\Gamma_1},
\\
&\leq  C(\epsilon) \Bigl(\frac{1}{c_1}  || ~|\nabla_g u^{\eta m}|_g||^2_2 + 1\Bigr) + \epsilon C^2_{\rho+2,2}||u^{\eta m}_t||^2_{\rho+2,\Gamma_1}
\\
&\leq C(\epsilon) \Bigl(\frac{1}{c_1}  || ~|\nabla_g u^{\eta m}|_g||^2_2 + 1\Bigr) + \epsilon 2^{\rho+1} C^2_{\rho+2,2}  \bigl(1 + ||u^{\eta m}_t||^{\rho+2}_{\rho+2,\Gamma_1}\bigr),
\end{aligned}
\end{equation}
where $C_{\rho+2,2}$ is an imbedding constant.

Replacing \eqref{303}, \eqref{304*} and \eqref{304} in \eqref{302} we get
\begin{equation}\label{305}
\begin{aligned}
&\frac{d}{dt} \Biggl[ \frac{1}{2}||u^{\eta m}_t||^2_2 + \frac{1}{2}\mu(t)|| ~|\nabla_g u^{\eta m}|_g||^2_2 + \frac{1}{\gamma+2}||u^{\eta m}||^{\gamma+2}_{\gamma+2, \Gamma_1} \Biggr]
\\
& \hspace{5mm} + \eta ||u^{\eta m}_t||^2_{2,\Gamma_1} + \Bigl( c_3 - \epsilon_0 - \epsilon 2^{\rho+1} C^2_{\rho+2,2} \Bigr)
||u^{\eta m}_t||^{\rho+2}_{\rho+2, \Gamma_1}
\\
&\leq c_3 meas(\Gamma_1) + C(\epsilon_0) + C(\epsilon) +  \epsilon 2^{\rho+1} C^2_{\rho+2,2}
\\
&\hspace{5mm} + \Bigl(\frac{1}{2}\mu'(t) + C(\epsilon_0) + \frac{C(\epsilon)}{c_1}  \Bigr) || ~|\nabla_g u^{\eta m}|_g||^2_2  + \frac{1}{2}||f(t)||^2_2 + \frac{1}{2}||u^{\eta m}_t||^2_2.
\end{aligned}
\end{equation}
By integrating \eqref{305} over $(0,t)$ with $t \in (0, t_m)$ we have
\begin{equation}\label{306}
\begin{aligned}
&\frac{1}{2}||u^{\eta m}_t||^2_2 + \frac{1}{2}\mu(t)|| ~|\nabla_g u^{\eta m}|_g||^2_2 + \frac{1}{\gamma+2}||u^{\eta m}||^{\gamma+2}_{\gamma+2, \Gamma_1} + \eta \int^t_0 ||u^{\eta m}_s||^2_{2,\Gamma_1} ds
\\
& \hspace{5mm}+ \Bigl( c_3 -- \epsilon_0 - \epsilon 2^{\rho+1} C^2_{\rho+2,2} \Bigr) \int^t_0 ||u^{\eta m}_s||^{\rho+2}_{\rho+2, \Gamma_1} ds
\\
& \leq \Bigl(c_3 meas(\Gamma_1)+ C(\epsilon_0) + C(\epsilon) +  \epsilon 2^{\rho+1} C^2_{\rho+2,2} \Bigr)T
\\
&\hspace{5mm} + \frac{1}{2} ||u_1||^2_2 + \frac{1}{2} \mu(0)|| ~|\nabla_g u_0|_g||^2_2 + \frac{1}{\gamma+2}||u_0||^{\gamma+2}_{\gamma+2, \Gamma_1}
\\
& \hspace{5mm} + \Bigl(\frac{1}{2} ||\mu'||_{L^\infty(0,T)} + C(\epsilon_0) + \frac{C(\epsilon)}{c_1} \Bigr) \int^t_0 || ~|\nabla_g u^{\eta m}(s)|_g||^2_2 ds
\\
&\hspace{5mm} + \frac{1}{2} \int^t_0 ||f(s)||^2_2 ds + \frac{1}{2}\int^t_0 ||u^{\eta m}_s(s)||^2_2 ds.
\end{aligned}
\end{equation}
Therefore, choosing $\epsilon_0 = \frac{1}{4}c_3$ and $\epsilon = \frac{c_3}{2^{\rho+3}C^2_{\rho+2,2}}$ and then by Gronwall's lemma we obtain
\begin{equation}\label{307}
||u^{\eta m}_t||^2_2 +|| ~|\nabla_g u^{\eta m}|_g||^2_2 + ||u^{\eta m}||^{\gamma+2}_{\gamma+2, \Gamma_1} + \int^t_0 ||u^{\eta m}_s(s)||^2_{2,\Gamma_1} ds  + \int^t_0 ||u^{\eta m}_s(s)||^{\rho+2}_{\rho+2, \Gamma_1} ds \leq C_5,
\end{equation}
where $C_5$ is a positive constant which is independent of $m$, $\eta$ and $t$. The estimate \eqref{307} implies that
\begin{equation}\label{308}
u^{\eta m} \hspace{3mm}\text{is uniformly bounded in} \hspace{3mm} L^\infty (0, T ; \mathcal{H})
\end{equation}
and
\begin{equation}\label{309}
u^{\eta m}_t \hspace{3mm}\text{is uniformly bounded in} \hspace{3mm} L^\infty (0, T ; L^2(\Omega)).
\end{equation}
We note that from \eqref{307}, taking the hypotheses on $q$ into account we also obtain
\begin{equation}\label{310}
\int^t_0\int_{\Gamma_1} |q(u^{\eta m}_s(s))|^2 d\Gamma ds \leq C_6,
\end{equation}
where $C_6$ is a positive constant independent of $m$, $\eta$ and $t$.

From \eqref{307}-\eqref{310}, there exists a subsequence of $\{u^{\eta m}\}$, which we still denote by $\{u^{\eta m}\}$, such that
$$
u^{\eta m} \rightarrow u^\eta \hspace{3mm}\text{weakly star in}\hspace{3mm} L^\infty (0, T ; \mathcal{H}),
$$
$$
u^{\eta m}_t \rightarrow u^\eta_t \hspace{3mm}\text{weakly star in}\hspace{3mm} L^\infty (0, T ; L^2(\Omega)),
$$
$$
u^{\eta m} \rightarrow u^\eta \hspace{3mm}\text{weakly in}\hspace{3mm} L^2 (0, T ; \mathcal{H}),
$$
$$
u^{\eta m}_t \rightarrow u^\eta_t \hspace{3mm}\text{weakly in}\hspace{3mm} L^2 (0, T ; L^2(\Omega)),
$$
$$
u^{\eta m}_t \rightarrow u^\eta_t \hspace{3mm}\text{weakly in}\hspace{3mm} L^2 (0, T ; L^2(\Gamma_1)),
$$
$$
u^{\eta m}_{tt} \rightarrow u^\eta_{tt} \hspace{3mm}\text{weakly in}\hspace{3mm} L^2 (0, T ; H^{-1}(\Omega)),
$$
$$
q(u^{\eta m}_t) \rightarrow \psi \hspace{3mm}\text{weakly in}\hspace{3mm} L^2 (0, T ; L^2(\Gamma_1)).
$$
Since $H^{1/2}(\Gamma) \hookrightarrow L^2(\Gamma)$ is compact, we have, thanks to Aubin-Lions Theorem that
$$
u^{\eta m} \rightarrow u^\eta \hspace{3mm}\text{strongly in}\hspace{3mm} L^2 (0, T ; L^2(\Gamma_1)),
$$
and consequently, by making use of Lions lemma, we deduce
$$
h(u^{\eta m}) \rightarrow h(u^\eta) \hspace{3mm}\text{weakly in}\hspace{3mm} L^2 (0, T ; L^2(\Gamma_1)).
$$

The above convergences permit us to pass to the limit in the \eqref{301}. Since $\{w_j\}$ is a basis of $H^1_0(\Omega)$ and $V_m$ is dense in $H^1_0(\Omega)$, after passing to the limit we obtain
\begin{equation}\label{311}
\begin{aligned}
&\int^T_0 \int_\Omega u^\eta_{tt}, v dx \theta(t) dt + \int^T_0 \mu(t) \int_\Omega \langle \nabla_g u^\eta, \nabla_g v \rangle_g dx \theta(t) dt + \eta \int^T_0 \int_{\Gamma_1} u^\eta_t v d\Gamma \theta(t) dt
\\
&+ \int^T_0\int_{\Gamma_1} \psi v d\Gamma \theta(t) dt - \int^T_0\int_{\Gamma_1} h(u^\eta) v dx \theta(t) dt = \int^T_0\int_\Omega f(x,t) v dx \theta(t) dt,
\end{aligned}
\end{equation}
for all $\theta \in D(0,T)$ and $v \in H^1_0(\Omega)$.

Since estimates \eqref{307} and \eqref{310} are also independent of $\eta$, we can pass to the limit when $ \eta \rightarrow 0$ in $u^\eta$ obtaining a function $u$ by the same argument used to obtain
$u^\eta$ from $u^{\eta m}$, such that
\begin{equation}\label{312}
u^\eta \rightarrow u \hspace{3mm}\text{weakly in}\hspace{3mm} L^2 (0, T ; \mathcal{H}),
\end{equation}
\begin{equation}\label{313}
u^\eta_t \rightarrow u_t \hspace{3mm}\text{weakly in}\hspace{3mm} L^2 (0, T ; L^2(\Omega)),
\end{equation}
\begin{equation}\label{314}
u^\eta_t \rightarrow u_t \hspace{3mm}\text{weakly in}\hspace{3mm} L^2 (0, T ; L^2(\Gamma_1)),
\end{equation}
\begin{equation}\label{315}
u^\eta_{tt} \rightarrow u_{tt} \hspace{3mm}\text{weakly in}\hspace{3mm} L^2 (0, T ; H^{-1}(\Omega)),
\end{equation}
\begin{equation}\label{316}
q(u^\eta _t) \rightarrow \psi \hspace{3mm}\text{weakly in}\hspace{3mm} L^2 (0, T ; L^2(\Gamma_1)),
\end{equation}
\begin{equation}\label{317}
h(u_\eta) \rightarrow h(u)  \hspace{5mm}\text{weakly in} \hspace{5mm} L^2(0,T ; L^2(\Gamma_1)).
\end{equation}

By above convergences in \eqref{311}, we have
\begin{equation}\label{318}
\begin{aligned}
&\int^T_0 \int_\Omega u_{tt}, v dx \theta(t) dt + \int^T_0 \mu(t) \int_\Omega \langle\nabla_g u, \nabla_g v \rangle_g dx \theta(t) dt
\\
&+ \int^T_0\int_{\Gamma_1} \psi v d\Gamma \theta(t) dt - \int^T_0\int_{\Gamma_1} h(u) v dx \theta(t) dt = \int^T_0\int_\Omega f(x,t) v dx \theta(t) dt.
\end{aligned}
\end{equation}

From the \eqref{318} and taking $v \in D(\Omega)$, we conclude that
\begin{equation}\label{319}
u_{tt} - \mu(t) Lu  = f  \hspace{3mm} \text{in} \hspace{3mm} D'(\Omega \times (0,T))
\end{equation}
and since \eqref{316} and \eqref{317}, it holds that
$$
\mu(t) \frac{\partial u}{\partial \nu} + \psi = h(u) \hspace{3mm} \text{in} \hspace{3mm}  L^2(0, T; L^2(\Gamma_1)).
$$

Our goal is to show that $\psi = q(u_t)$. Indeed, considering $w= u^{\eta m}$ in \eqref{301} and then integrating over $(0,T)$, we have
\begin{multline*}
\int^T_0 \langle u^{\eta m}_{tt}, u^{\eta m} \rangle dt + \int^T_0 \mu(t) ||~|\nabla_g u^{\eta m}|_g||^2_2 dt + \eta \int^T_0 \langle u^{\eta m}_t, u^{\eta m}\rangle_{\Gamma_1} dt
\\
+ \int^T_0 \langle q(u^{\eta m}_t), u^{\eta m}\rangle_{\Gamma_1} dt - \int^T_0 \langle h(u^{\eta m}), u^{\eta m}\rangle_{\Gamma_1} dt = \int^T_0 \langle f, u^{\eta m} \rangle dt.
\end{multline*}
Then from convergences \eqref{312}-\eqref{317} we obtain
\begin{equation}\label{320}
\lim_{m\rightarrow \infty, \eta \rightarrow 0} \int^T_0 \mu(t) ||~|\nabla_g u^{\eta m}|_g||^2_2 dt = - \int^T_0 \langle u_{tt}, u \rangle dt - \int^T_0 \langle \psi, u \rangle_{\Gamma_1} dt + \int^T_0 \langle h(u), u \rangle_{\Gamma_1} dt + \int^T_0 \langle f, u \rangle dt.
\end{equation}
By combining \eqref{319} and \eqref{320}, we have
$$
\lim_{m\rightarrow \infty, \eta \rightarrow 0} \int^T_0 \mu(t) ||~|\nabla_g u^{\eta m}|_g||^2_2 dt = \int^T_0 \mu(t) ||~|\nabla_g u|_g||^2_2 dt,
$$
which implies that
\begin{equation}\label{321}
|\nabla_g u^{\eta m}|_g \rightarrow |\nabla_g u|_g \hspace{3mm}\text{strongly in}\hspace{3mm} L^2(0, T; L^2(\Omega)).
\end{equation}

Next, considering $w = u^{\eta m}_t$ in \eqref{301} and then integrating over $(0,T)$, we have
\begin{multline*}
\int^T_0 \langle u^{\eta m}_{tt}, u^{\eta m}_t \rangle dt + \int^T_0 \mu(t) \int_\Omega \langle \nabla_g u^{\eta m}, \nabla_g u^{\eta m}_t \rangle_g dx dt + \eta \int^T_0 ||u^{\eta m}_t||^2_{2,\Gamma_1} dt
\\
+ \int^T_0 \langle q(u^{\eta m}_t), u^{\eta m}_t\rangle_{\Gamma_1} dt - \int^T_0 \langle h(u^{\eta m}), u^{\eta m}_t \rangle_{\Gamma_1}  dt = \int^T_0 \langle f, u^{\eta m}_t \rangle dt.
\end{multline*}
From \eqref{313}-\eqref{317} and \eqref{321}, we arrive at
\begin{equation}\label{322}
\lim_{m\rightarrow \infty, \eta \rightarrow 0} \int^T_0 \langle q(u^{\eta m}_t), u^{\eta m}_t\rangle_{\Gamma_1} dt = \int^T_0 \langle \psi, u_t \rangle_{\Gamma_1} dt.
\end{equation}

On the other hand, since $q$ is a nondecreasing monotone function, we get
$$
\int^T_0 \langle q(u^{\eta m}_t) - q(\varphi), u^{\eta m}_t - \varphi \rangle_{\Gamma_1} dt \geq 0
$$
for all $\varphi \in L^2(\Gamma_1)$. Thus, it implies that
$$
\int^T_0 \langle q(u^{\eta m}_t), \varphi \rangle_{\Gamma_1} dt + \int^T_0 \langle q(\varphi), u^{\eta m}_t - \varphi \rangle_{\Gamma_1} dt \leq \int^T_0 \langle q(u^{\eta m}_t), u^{\eta m}_t\rangle_{\Gamma_1} dt.
$$
By considering \eqref{314}, \eqref{316} and \eqref{322}, we obtain
\begin{equation}\label{320*}
\int^T_0 \langle \psi - q(\varphi), u_t - \varphi \rangle_{\Gamma_1} dt \geq 0,
\end{equation}
which implies that $\psi = q(u_t)$.

We now show the uniqueness of the solution. Let $u^1$ and $u^2$ be two solutions of problem \eqref{1}. Then $z = u^1 - u^2$ verifies
$$
\int_\Omega z_{tt} w dx +  \mu(t) \int_\Omega \langle \nabla_g z, \nabla_g w \rangle_g dx + \int_{\Gamma_1} (q(u^1_t) - q(u^2_t)) w d\Gamma =  \int_{\Gamma_1} (h(u^1) - h(u^2) w d\Gamma dt,
$$
for all $w \in\mathcal{H}$. By replacing $w = z_t$ in above identity and observing that $q$ is monotonously nondecreasing and $h : H^1_0(\Omega) \rightarrow L^2(\Gamma_1)$ is globally Lipschitz, it holds that
$$
\frac{d}{dt} \Biggl[ \frac{1}{2}||z_t||^2_2 + \frac{1}{2}\mu(t)||~|\nabla_g z|_g||^2_2 \Biggr]   \leq C_7 ||~|\nabla_g z|_g||^2_2,
$$
Where $C_7$ is for some positive constant. By integrating from $0$ to $t$ and using Gronwall's Lemma, we conclude that $||z_t||_2 = ||~|\nabla_g z|_g||_2 = 0$.

\subsection{Locally Lipschitz source}

In this subsection, we loosen the globally Lipschitz condition on the source by allowing $h$ to be locally Lipschitz continuous. More precisely, we have the following result.

\begin{prop}
Assume that $(H_1)-(H_4)$ hold. In addition, assume that  $(u_0,u_1) \in \mathcal{H} \times L^2(\Omega)$ and $h : H^1_0(\Omega) \rightarrow L^2(\Gamma_1)$ is locally Lipschitz continuous satisfying $c_5|s|^{\gamma+1} \leq |h(s)| \leq c_6|s|^{\gamma+1}$, where $c_5$, $c_6$ are for some positive constants. Then problem \eqref{1} has a unique local solution $u \in C(0,T ; \mathcal{H}) \cap C^1 (0,T ; L^2(\Omega))$ for some $T > 0$.
\end{prop}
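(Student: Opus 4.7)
The plan is a truncation argument reducing the locally Lipschitz case to Proposition 3.1. First I would cut off the source in the $H^1_0$-norm: fix $K>0$ and pick $\phi_K \in C^1(\mathbb{R})$ with $\phi_K(s)=1$ for $s \le K$, $\phi_K(s)=0$ for $s \ge K+1$, $0 \le \phi_K \le 1$, and set
\[
h_K(u) := \phi_K\bigl(\,\||\nabla_g u|_g\|_2\bigr)\,h(u).
\]
Since $h$ is locally Lipschitz from $H^1_0(\Omega)$ into $L^2(\Gamma_1)$, the cutoff turns $h_K : H^1_0(\Omega) \to L^2(\Gamma_1)$ into a globally Lipschitz map (with Lipschitz constant depending on $K$), because $h_K$ vanishes outside the ball of radius $K+1$ and is the product of a Lipschitz scalar with a locally Lipschitz source on that ball. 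Proposition 3.1 therefore yields, for every $T>0$, a unique global solution $u_K$ of problem \eqref{1} with $h$ replaced by $h_K$.

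Next I would derive an a priori energy estimate for $u_K$ that does \emph{not} depend on $K$. Testing the truncated equation with $(u_K)_t$ and using the growth bound $|h(s)| \leq c_6 |s|^{\gamma+1}$ together with the embedding $H^1_0(\Omega) \hookrightarrow L^{(\gamma+1)\frac{\rho+2}{\rho+1}}(\Gamma_1)$ exactly as in \eqref{304*}, while absorbing the $\|(u_K)_t\|^{\rho+2}_{\rho+2,\Gamma_1}$ contribution into the damping via the lower bound \eqref{303}, one reaches a differential inequality of the form
\[
\Phi_K'(t) \leq C\bigl(1 + \Phi_K(t)^{(\gamma+1)/2}\bigr) + C\|f(t)\|_2^2, \qquad \Phi_K(t) := \tfrac12\|(u_K)_t(t)\|_2^2 + \tfrac{\mu(t)}{2}\||\nabla_g u_K(t)|_g\|_2^2,
\]
whose constants depend only on the parameters of the problem and on $\|u_0\|_{\mathcal{H}}$, $\|u_1\|_2$, not on $K$. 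A standard comparison argument produces $T^*=T^*(u_0,u_1)>0$ and $R=R(u_0,u_1)$ with $\Phi_K(t) \le R$ on $[0,T^*]$, uniformly in $K$.

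Choosing $K$ larger than $\sqrt{2R/\mu_0}$, the cutoff satisfies $\phi_K(\||\nabla_g u_K(t)|_g\|_2)=1$ for all $t \in [0,T^*]$, hence $h_K(u_K)=h(u_K)$ there and $u_K$ is in fact a solution of the original problem \eqref{1} on $[0,T^*]$. For uniqueness, two solutions $u^1,u^2$ remain in a common bounded ball of $H^1_0(\Omega)$ on $[0,T]$ by continuity in time; the local Lipschitz assumption yields a constant $L$ with $\|h(u^1)-h(u^2)\|_{2,\Gamma_1} \leq L\,\||\nabla_g(u^1-u^2)|_g\|_2$, and the monotonicity of $q$ makes the damping difference contribute nonnegatively. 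Testing the difference equation with $z_t=(u^1-u^2)_t$ and applying Gronwall's lemma exactly as in the end of Subsection 3.1 gives $z \equiv 0$.

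The main obstacle is securing the $K$-independent a priori estimate. Supercriticality of the source forces the nonlinear term $\||\nabla_g u_K|_g\|_2^{\gamma+1}$ on the right side of the energy identity, so the comparison ODE may blow up in finite time; this is precisely why only a \emph{local} solution is claimed. The delicate point is to extract the pair $(T^*,R)$ from the comparison inequality \emph{before} selecting the truncation level $K$, so that the cutoff is guaranteed to be inactive throughout $[0,T^*]$ and $u_K$ genuinely satisfies the untruncated equation there.
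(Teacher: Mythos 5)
Your proposal is correct in its overall architecture and follows the same route as the paper: truncate the source at level $K$ in the $H^1_0$-seminorm, invoke Proposition 3.1 for the truncated (globally Lipschitz) problem, derive an a priori bound showing the truncation is never activated on a short time interval, and conclude by uniqueness. The one genuinely different ingredient is how the a priori estimate is closed, and the difference is worth noting. The paper does \emph{not} seek a $K$-independent differential inequality: it estimates the source term through the global Lipschitz constant $L_h(K)$ of $h_K$ into $L^{\frac{\rho+2}{\rho+1}}(\Gamma_1)$ (see \eqref{323*}), runs a \emph{linear} Gronwall argument with $K$-dependent constants $C_1(L_h(K))$, $C_2(L_h(K))$, and then shrinks $T$ as in \eqref{3.30} so that the Gronwall factor stays below $2$; the quantifier order is ``choose $K^2\geq 2(C_9+1)$ first (depending only on the data), then $T=T(K)$.'' You instead exploit the pointwise growth bound $|h_K(s)|\leq c_6|s|^{\gamma+1}$, which survives the cutoff, to obtain a superlinear comparison inequality with constants independent of $K$, extract $(T^*,R)$ first, and only then fix $K>\sqrt{2R/\mu_0}$. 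Both orderings are logically sound; yours buys a lifespan that does not depend on the truncation level (which the paper must recover separately in Section 3.3 via Lemma 3.1(2) when it lets the second truncation parameter $n\to\infty$), at the price of a nonlinear Gronwall step, while the paper's version stays within the linear Gronwall framework it already set up in Proposition 3.1.

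There is one substantive omission. Your functional $\Phi_K$ controls only $\|u_t\|_2$ and $\||\nabla_g u|_g\|_2$, but the assertion is that $u\in C(0,T;\mathcal{H})$, and in the supercritical range $\gamma>\frac{2}{n-2}$ the trace embedding $H^1_0(\Omega)\hookrightarrow L^{\gamma+2}(\Gamma_1)$ fails, so boundedness of $\||\nabla_g u|_g\|_2$ does not yield $u(t)\in L^{\gamma+2}(\Gamma_1)$. This is precisely why the paper carries the term $\frac{1}{\gamma+2}\|u\|^{\gamma+2}_{\gamma+2,\Gamma_1}$ inside the energy functional in \eqref{322*} and why the hypothesis includes the \emph{lower} bound $c_5|s|^{\gamma+1}\leq|h(s)|$, which is used to absorb $\frac{d}{dt}\frac{1}{\gamma+2}\|u\|^{\gamma+2}_{\gamma+2,\Gamma_1}=\int_{\Gamma_1}|u|^\gamma u\,u_t\,d\Gamma$ into the factor $(1+\frac{1}{c_5})\int_{\Gamma_1}|h_K(u)||u_t|\,d\Gamma$; your argument never uses that lower bound, which is the symptom of the gap. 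The repair is routine — add the boundary term to $\Phi_K$ and bound its derivative by $C(\epsilon)\||\nabla_g u|_g\|_2^{(\gamma+1)\frac{\rho+2}{\rho+1}}+\epsilon\|u_t\|^{\rho+2}_{\rho+2,\Gamma_1}$ exactly as for the source term, using the embedding $H^1_0(\Omega)\hookrightarrow L^{(\gamma+1)\frac{\rho+2}{\rho+1}}(\Gamma_1)$ — but without it the conclusion that the solution lives in $\mathcal{H}$ is not established.
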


\begin{proof}

Define
$$
h_K(u) =
\begin{cases}
h(u) \hspace{3mm}&\text{if}\hspace{3mm} ||~|\nabla_g u|_g||_2 \leq K,
\\
h\Bigl(\frac{Ku}{||~|\nabla_g u|_g||_2}\Bigr) \hspace{3mm}&\text{if}\hspace{3mm} ||~|\nabla_g u|_g||_2 > K,
\end{cases}
$$
where $K$ is a positive constant. With this truncated $h_K$, we consider the following problem:
\begin{equation}\label{321*}
\begin{cases}
\vspace{3mm} u_{tt} - \mu(t) Lu = f(x,t) &\hspace{5mm} \text{in} \hspace{5mm}
\Omega~~\times~~ (0, +\infty),
\\
\vspace{3mm} u = 0 &\hspace{5mm} \text{on} \hspace{5mm} \Gamma_0 ~~\times~~ (0, +\infty),
\\
\vspace{3mm} \mu(t) \frac{\partial u}{\partial \nu_L} + q(u_t) = h_K(u) &\hspace{5mm} \text{on} \hspace{5mm} \Gamma_1 ~~\times~~ (0, +\infty),
\\
\vspace{3mm} u(x,0) = u_0(x), \hspace{5mm} u_t(x,0) = u_1(x).
\end{cases}
\end{equation}

Since $h_K : H^1_0(\Omega) \rightarrow L^2(\Gamma_1)$ is globally Lipschitz continuous for each $K$ (see \cite{chue}), then by Proposition 3.1, the truncated problem \eqref{321*} has a unique global solution $u_K \in C(0,T ;\mathcal{H}) \cap C^1 (0,T ; L^2(\Omega))$ for any $T > 0$. Moreover by \cite{lasi2} there exists a sequence of functions $u^l_K$, which converges to $u_K$ in the class $C(0,T ; H^2(\Omega)) \cap C^1 (0,T ; H^1_0(\Omega))$. For simplifying the notation in the rest of the proof, we shall express $u^l_K$ as $u$.

By the regularity of $u$, we can multiply $\eqref{321*}$ by $u_t$ and integrate on $\Omega \times (0,t)$, where $0 < t < T$. Then we obtain by using the fact $\mu'(s) < 0$ for all $s > 0$,
\begin{equation}\label{322*}
\begin{aligned}
&\frac{1}{2}(||u_t||^2_2 + \mu(t)||~|\nabla_g u|_g||^2_2) + \frac{1}{\gamma + 2} ||u||^{\gamma+2}_{\gamma+2, \Gamma_1} + \int^t_0\int_{\Gamma_1} q(u_s(x,s)) u_s(x,s) d\Gamma ds
\\
&\leq \frac{1}{2} (||u_1||^2_2 + \mu(0)||~|\nabla_g u_0|_g||^2_2) + \frac{1}{\gamma + 2} ||u_0||^{\gamma+2}_{\gamma+2, \Gamma_1} + \int^t_0\int_\Omega f(x,s) u_s(x,s) dx ds
\\
& \hspace{5mm} + \Bigl(1 + \frac{1}{c_5} \Bigr) \int^t_0\int_{\Gamma_1} |h_K(u(x,s))|~ |u_s(x,s)| d\Gamma ds.
\end{aligned}
\end{equation}

We note that  $h_K : H^1_0(\Omega) \rightarrow L^{\frac{\rho+2}{\rho+1}}(\Gamma_1)$ is globally Lipschitz with Lipschitz constant $L_h(K)$ (see \cite{chue, guo3}). Hence we estimate the last term on the right-hand side of \eqref{322*} as follows:
\begin{equation}\label{323*}
\begin{aligned}
&\Bigl(1 + \frac{1}{c_5} \Bigr)\int^t_0\int_{\Gamma_1} |h_K(u(x,s))|~ |u_s(x,s)| d\Gamma ds
\\
& \leq \Bigl(1 + \frac{1}{c_5} \Bigr)\int^t_0 ||h_K(u(s))||_{\frac{\rho+2}{\rho+1}, \Gamma_1}||u_s||_{\rho+2, \Gamma_1} ds
\\
& \leq \epsilon_1 \int^t_0 ||u_s(s)||^{\rho+2}_{\rho+2, \Gamma_1} ds + C(\epsilon_1) \int^t_0 ||h_K(u(s))||^{\frac{\rho+2}{\rho+1}}_{\frac{\rho+2}{\rho+1}, \Gamma_1} ds
\\
& \leq \epsilon_1 \int^t_0 ||u_s(s)||^{\rho+2}_{\rho+2, \Gamma_1} ds + C(\epsilon_1)\Bigl(\frac{2}{\sqrt{c_1}}L_h(K)\Bigr)^{\frac{\rho+2}{\rho+1}} \int^t_0 ||~|\nabla_g u|_g||^2_2 ds
\\
& \hspace{5mm} + tC(\epsilon_1)\Bigl(\Bigl(\frac{2}{\sqrt{c_1}}L_h(K)\Bigr)^{\frac{\rho+2}{\rho+1}} + 2^{-(\rho+1)}|h(0)|^{\frac{\rho+2}{\rho+1}}meas(\Gamma_1)\Bigr).
\end{aligned}
\end{equation}

From the hypothesis on $q$, we have
\begin{equation}\label{324*}
\int^t_0\int_{\Gamma_1} q(u_s(x,s)) u_s(x,s) d\Gamma ds  \geq c_3 \int^t_0 ||u_s(s)||^{\rho+2}_{\rho+2, \Gamma_1} ds - tc_3 meas(\Gamma_1).
\end{equation}

By replacing \eqref{323*} and \eqref{324*} in \eqref{322*} and choosing $\epsilon_1 \leq c_3$, we get
\begin{multline}\label{325*}
||u_t||^2_2 + ||~|\nabla_g u|_g||^2_2 + ||u||^{\gamma+2}_{\gamma+2, \Gamma_1} +  \int^t_0 ||u_s(s)||^{\rho+2}_{\rho+2, \Gamma_1} ds
\\
\leq C_9 + C_1(L_h(K))T + C_2(L_h(K)) \int^t_0 ||u_s(s)||^2_2 + ||~|\nabla_g u(s)|_g||^2_2 ds
\end{multline}
for all $t \in [0, T]$, where
$$
C_9 = \frac{1}{2C_8}(||u_1||^2_2 + \mu(0)||~|\nabla_g u_0|_g||^2_2 + ||u_0||^{\gamma+2}_{\gamma+2, \Gamma_1} + ||f||_{L^2(0,T;L^2(\Omega))} ),
$$
$$
C_1(L_h(K)) = \frac{1}{C_8}C(\epsilon_1)\Bigl(\Bigl(\frac{2}{\sqrt{c_1}}L_h(K)\Bigr)^{\frac{\rho+2}{\rho+1}} + 2^{-(\rho+1)}|h(0)|^{\frac{\rho+2}{\rho+1}}meas(\Gamma_1)\Bigr) + \frac{1}{C_8}c_3 meas(\Gamma_1),
$$
$$
C_2(L_h(K)) = \frac{1}{C_8}\Bigl( C(\epsilon_1)\Bigl(\frac{2}{\sqrt{c_1}}L_h(K)\Bigr)^{\frac{\rho+2}{\rho+1}} + \frac{1}{2} \Bigr),
$$
for $C_8 = \min\{\frac{1}{\gamma+2}, \frac{\mu_0}{2}, c_3 -\epsilon_1\}$. Thus by Gronwall's inequality, \eqref{325*} becomes
$$
||u_t||^2_2 + ||~|\nabla_g u|_g||^2_2 + ||u||^{\gamma+2}_{\gamma+2,\Gamma_1} + \int^t_0 ||u_s(s)||^{\rho+2}_{\rho+2, \Gamma_1} ds \leq ( C_9 + C_1(L_h(K))T ) e^{C_2(L_h(K))t},
$$

for all $t \in [0,T]$. If we choose
\begin{equation}\label{3.30}
T = \min\Bigl\{\frac{1}{ C_1(L_h(K))}, \frac{1}{C_2(L_h(K))}\ln 2\Bigr\},
\end{equation}
then
\begin{equation}\label{326*}
||u_t||^2_2 + ||~|\nabla_g u|_g||^2_2 + ||u||^{\gamma+2}_{\gamma+2, \Gamma_1} + \int^t_0 ||u_s(s)||^{\rho+2}_{\rho+2, \Gamma_1} ds \leq 2 (C_9 + 1) \leq K^2 \hspace{3mm}\text{for all}\hspace{3mm} t \in [0, T],
\end{equation}
provided we choose $K^2 \geq 2(C_9 + 1)$. Consequently, \eqref{326*} gives us that $ ||~|\nabla_g u|_g||_2 \leq K$ for all $t \in [0,T]$. Therefore, by the definition of $h_K$, we have that $h_K(u) = h(u)$ on $[0,T]$. By the uniqueness of solutions, the solution of the truncated problem \eqref{321*} accords with the solution of the original, non-truncated problem \eqref{1} for $t \in [0,T]$, which means that the proof of Proposition 3.2 is completed.

\end{proof}

\subsection{Completion of the proof for the local existence}

In order to establish the existence of solutions, we need to extend the result in Proposition 3.2 where the source $h$ is locally Lipschitz from $H^1_0(\Omega)$ into $L^{\frac{\rho+2}{\rho+1}}(\Gamma_1)$. For the construction of the Lipschitz approximation for the source, we employ another truncated function introduced in \cite{radu}. Let $\delta_n \in C^{\infty}_0(\mathbb{R})$ be a cut off function such that
$$
\begin{cases}
0 \leq \delta_n \leq 1,
\\
\delta_n(s) = 1, \hspace{3mm}&\text{if}\hspace{3mm} |s| \leq n,
\\
\delta_n(s) = 0, \hspace{3mm}&\text{if}\hspace{3mm} |s| \geq 2n,
\end{cases}
$$
and $|\delta'_n(s)| \leq \frac{C}{n}$ for some constant $C$ independent from $n$ and define
\begin{equation}\label{327*}
h_n(u) = h(u) \delta_n(u).
\end{equation}
Then the truncated function $h_n$ is satisfied the following lemma. The proof of this lemma is a routine series of estimates as in \cite{boci1, guo3}, so we omit it here.
\begin{lem}
The following statements hold.
\begin{enumerate}
\item $h_n : H^1_0(\Omega) \rightarrow L^2(\Gamma_1)$ is globally Lipschitz continuous.
\\
\item $h_n : H^{1-\epsilon}_0(\Omega) \rightarrow L^{\frac{\rho+2}{\rho+1}}(\Gamma_1)$ is locally Lipschitz continuous with Lipschitz constant independent of $n$.
\end{enumerate}

\end{lem}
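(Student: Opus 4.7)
The plan is to derive both statements from sharp pointwise bounds on $h_n'$ combined with boundary trace embeddings. For part (1), differentiating yields
$$h_n'(s) = h'(s)\delta_n(s) + h(s)\delta_n'(s),$$
and since the support of $h_n'$ is contained in $\{|s| \leq 2n\}$, one can use $|h'(s)| \leq C|s|^\gamma$, $|\delta_n| \leq 1$, $|\delta_n'| \leq C/n$ and $|h(s)| \leq C|s|^{\gamma+1}$ to obtain $\|h_n'\|_{L^\infty(\mathbb{R})} \leq C(n)$. The mean-value theorem then produces $|h_n(u) - h_n(v)| \leq C(n)|u-v|$ pointwise, whence part (1) follows by squaring, integrating over $\Gamma_1$, and invoking the trace embedding $H^1(\Omega) \hookrightarrow L^2(\Gamma_1)$.

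The key observation for part (2) is that $h_n'$ admits a pointwise bound \emph{uniform in} $n$. On the support $n \leq |s| \leq 2n$ of $\delta_n'$, the factor $1/n$ absorbs one power of $|s|$, giving $|h(s)\delta_n'(s)| \leq C|s|^{\gamma+1}/n \leq 2C|s|^\gamma$. Combined with $|h'(s)\delta_n(s)| \leq (\gamma+1)|s|^\gamma$, this yields $|h_n'(s)| \leq C|s|^\gamma$ with $C$ independent of $n$, and the mean-value theorem gives
$$|h_n(u) - h_n(v)| \leq C\bigl(|u|^\gamma + |v|^\gamma\bigr)|u-v|.$$

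To conclude, I would take the $L^{(\rho+2)/(\rho+1)}(\Gamma_1)$ norm and apply H\"older's inequality with conjugate exponents $p_1,p_2$ satisfying $\tfrac{1}{p_1}+\tfrac{1}{p_2} = \tfrac{\rho+1}{\rho+2}$, choosing $p_1$ so that $p_1\gamma$ is essentially the critical trace exponent $\tfrac{2(n-1)}{n-2}$. The admissibility hypothesis $\rho \geq \tfrac{2(n-2)\gamma - 2}{n-(n-2)\gamma}$, equivalent to $(\rho+2)(n-2)\gamma \leq \rho n + 2$, combined with the elementary inequality $\rho n + 2 < 2(n-1)(\rho+1)$, shows that the corresponding $p_2$ is strictly below the critical trace exponent. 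The strictness provides enough slack to pick $\epsilon > 0$ small and then slightly subcritical $p_1, p_2$ so that both embeddings
$$H^{1-\epsilon}(\Omega) \hookrightarrow L^{p_1\gamma}(\Gamma_1), \qquad H^{1-\epsilon}(\Omega) \hookrightarrow L^{p_2}(\Gamma_1)$$
hold; H\"older together with these embeddings then gives the desired locally Lipschitz bound with constant depending only on $R = \max\{\|u\|_{H^{1-\epsilon}}, \|v\|_{H^{1-\epsilon}}\}$, independent of $n$. The main technical obstacle is this last matching: verifying that the admissibility condition on $\rho$ leaves strictly positive room for the loss of $\epsilon$ in the trace regularity, while still placing $p_1\gamma$ and $p_2$ within the range covered by the $H^{1-\epsilon}$ trace embedding.
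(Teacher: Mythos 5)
The paper offers no internal proof to compare against---it dismisses the lemma as ``a routine series of estimates as in \cite{boci1, guo3}''---but your skeleton is precisely the standard route in those references: part (1) via $\|h_n'\|_{L^\infty}\leq C(n)$ and the trace embedding is correct as written, and for part (2) the uniform-in-$n$ bound $|h_n'(s)|\leq C|s|^\gamma$ (absorbing one power of $|s|$ into $1/n$ on the annulus $n\leq|s|\leq 2n$ where $\delta_n'$ lives), followed by the mean value theorem, H\"older and trace embeddings, is the right strategy.

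The gap is in the exponent bookkeeping that you yourself flag as the main obstacle. With $p_1\gamma$ placed at the critical trace exponent, $\tfrac{1}{p_1}=\tfrac{(n-2)\gamma}{2(n-1)}$, the chain $(\rho+2)(n-2)\gamma\leq \rho n+2<2(n-1)(\rho+1)$ only yields $\tfrac{1}{p_1}<\tfrac{\rho+1}{\rho+2}$, i.e.\ $\tfrac{1}{p_2}>0$: it shows $p_2$ is \emph{finite}, not that $p_2$ lies strictly below $\tfrac{2(n-1)}{n-2}$. Strict subcriticality of $p_2$ is equivalent to $\tfrac{\rho+1}{\rho+2}>\tfrac{(n-2)(\gamma+1)}{2(n-1)}$, which is the \emph{strict} form of the hypothesis in $(H_3)$. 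At the admissible borderline $\rho=\tfrac{2(n-2)\gamma-2}{n-(n-2)\gamma}$ one computes $\tfrac{1}{p_2}=\tfrac{\rho+1}{\rho+2}-\tfrac{(n-2)\gamma}{2(n-1)}=\tfrac{n-2}{2(n-1)}$, so both H\"older exponents are forced exactly onto the critical trace exponent $\tfrac{2(n-1)}{n-2}$ and there is no slack whatsoever for the $\epsilon$-loss: since $H^{1-\epsilon}(\Omega)$ traces only into $L^{2(n-1)/(n-2+2\epsilon)}(\Gamma_1)$, no admissible pair $p_1,p_2$ with $\tfrac{1}{p_1}+\tfrac{1}{p_2}\leq\tfrac{\rho+1}{\rho+2}$ makes both embeddings hold. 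Your argument is complete whenever $(H_3)$ holds with strict inequality; to cover the equality case you would need either to strengthen the hypothesis, or to restate part (2) as a Lipschitz estimate in the $H^{1-\epsilon}$ metric on $H^1$-bounded sets (which is all that is used in \eqref{337*})---though even that reformulation only relaxes the constraint on $p_1\gamma$, not on $p_2$, and so still fails at the borderline.
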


With the truncated source $h_n$ defined in \eqref{327*}, by Proposition 3.2 and Lemma 3.1, we have a unique local solution $u^n \in C(0,T ; \mathcal{H}) \cap C^1(0,T;L^2(\Omega))$ satisfying the following approximation of \eqref{1}
\begin{equation}\label{328*}
\begin{cases}
\vspace{3mm} u_{tt} - \mu(t) Lu = f(x,t) &\hspace{5mm} \text{in} \hspace{5mm}
\Omega~~\times~~ (0, +\infty),
\\
\vspace{3mm} u = 0 &\hspace{5mm} \text{on} \hspace{5mm} \Gamma_0 ~~\times~~ (0, +\infty),
\\
\vspace{3mm} \mu(t) \frac{\partial u}{\partial \nu_L} + q(u_t) = h_n(u) &\hspace{5mm} \text{on} \hspace{5mm} \Gamma_1 ~~\times~~ (0, +\infty),
\\
\vspace{3mm} u(x,0) = u_0(x), \hspace{5mm} u_t(x,0) = u_1(x).
\end{cases}
\end{equation}

From Lemma 3.1, the life span $T$ of each solution $u^n$, given in \eqref{3.30}, is independent of $n$ since the local Lipschitz constant of the mapping $h_n : H^1_0(\Omega) \rightarrow L^\frac{\rho+2}{\rho+1}(\Gamma_1)$ is independent of $n$. Also we known that $T$ depends on $K$, where $K^2 \geq 2(C_9 + 1)$, however, since $||u^n_1||^2_2 + ||~|\nabla_g u^n_0|_g||^2_2 + ||u^n_0||^{\gamma+2}_{\gamma+2, \Gamma_1} \rightarrow ||u_1||^2_2 + ||~|\nabla_g u_0|_g||^2_2 + ||u_0||^{\gamma+2}_{\gamma+2, \Gamma_1}$, we can choose $K$ sufficiently large so that $K$ is independent of $n$. By \eqref{326*},
\begin{equation}\label{329*}
||u^n_t||^2_2 + ||~|\nabla_g u^n|_g||^2_2 + ||u^n||^{\gamma+2}_{\gamma+2, \Gamma_1} \leq K^2
\end{equation}
for all $t \in [0, T]$. Therefore, there exists a function $u$ and a subsequence of $\{u^n\}$, which we still denote by $\{u^n\}$, such that
\begin{equation}\label{330*}
u^n \rightarrow u \hspace{3mm}\text{weak star in}\hspace{3mm} L^\infty (0, T ; \mathcal{H}),
\end{equation}
\begin{equation}\label{331*}
u^n_t \rightarrow u_t \hspace{3mm}\text{weak star in}\hspace{3mm} L^\infty (0, T ; L^2(\Omega)).
\end{equation}

By \eqref{329*}, \eqref{330*} and \eqref{331*}, we infer
\begin{equation}\label{332*}
||u_t||^2_2 + ||~|\nabla_g u|_g||^2_2 + ||u||^{\gamma+2}_{\gamma+2, \Gamma_1} \leq K^2
\end{equation}
for all $t \in [0, T]$. Moreover, by Aubin-Lions Theorem, we have
\begin{equation}\label{333*}
u^n \rightarrow u \hspace{3mm}\text{strongly in}\hspace{3mm} L^\infty (0, T ; H^{1-\epsilon}(\Omega)),
\end{equation}
for $0 < \epsilon < 1$. Since $u^n$ is a solution of \eqref{328*}, it holds that
\begin{multline}\label{334*}
\int^T_0 \int_\Omega u^n_{tt}, \phi dx dt + \int^T_0 \mu(t)\int_\Omega \langle \nabla_g u^n, \nabla_g \phi \rangle_g dx dt + \int^T_0\int_{\Gamma_1} q(u^n_t) \phi d\Gamma dt
\\
= \int^T_0 \int_\Omega f(x,t) \phi dx dt + \int^T_0\int_{\Gamma_1} h_n(u^n) \phi d\Gamma dt,
\end{multline}
for any $\phi \in C(0,T ; \mathcal{H}) \cap C^1 (0,T ; L^2(\Omega))$, $\phi  \in L^{\rho+2}(0, T ; \Gamma_1)$.

Now we will show that
\begin{equation}\label{335*}
\lim_{n\rightarrow \infty} \int^T_0\int_{\Gamma_1} h_n(u^n) \phi d\Gamma dt = \int^T_0\int_{\Gamma_1} h(u) \phi d\Gamma dt.
\end{equation}
Indeed, we have
\begin{equation}\label{336*}
\Bigl|  \int^T_0\int_{\Gamma_1} (h_n(u^n) - h(u)) \phi d\Gamma dt\Bigr| \leq  \int^T_0\int_{\Gamma_1} |h_n(u^n) - h_n(u)|~|\phi| d\Gamma dt + \int^T_0\int_{\Gamma_1} |h_n(u) - h(u)|~|\phi| d\Gamma dt.
\end{equation}

By $(2)$ in Lemma 3.1 and \eqref{333*}, we obtain
\begin{equation}\label{337*}
\begin{aligned}
\int^T_0\int_{\Gamma_1} |h_n(u^n) - h_n(u)|~|\phi| d\Gamma dt &\leq \Bigl( \int^T_0\int_{\Gamma_1} |h_n(u^n) - h_n(u)|^{\frac{\rho+2}{\rho+1}} d\Gamma dt \Bigr)^{\frac{\rho+1}{\rho+2}} \Bigl(\int^T_0\int_{\Gamma_1} |\phi|^{\rho+2} d\Gamma dt \Bigr)^{\frac{1}{\rho+2}}
\\
& \leq C(K) ||\phi||_{L^{\rho+2}(0, T ; \Gamma_1)} \Bigl( \int^T_0 ||u^n - u||^{\frac{\rho+2}{\rho+1}}_{H^{1-\epsilon}(\Omega)} dt \Bigr)^{\frac{\rho+1}{\rho+2}} \rightarrow 0.
\end{aligned}
\end{equation}

Since $\delta_n(u(x)) \rightarrow 1 $ a.e. in $\Omega$, we have $h_n(u) \rightarrow h(u)$ a.e. Then we also have $ |h_n(u) - h(u)|^{\frac{\rho+2}{\rho+1}} \leq 2^{\frac{\rho+2}{\rho+1}}|h(u)|^{\frac{\rho+2}{\rho+1}}$ and $h(u) \in L^{\frac{\rho+2}{\rho+1}}(\Gamma_1)$, for $u\in H^1_0(\Omega)$. Thus by the Lebesgue Dominated Convergence Theorem, we have
\begin{equation}\label{338*}
\begin{aligned}
\int^T_0\int_{\Gamma_1} |h_n(u) - h(u)|~|\phi| d\Gamma dt &\leq \Bigl( \int^T_0\int_{\Gamma_1} |h_n(u) - h(u)|^{\frac{\rho+2}{\rho+1}} d\Gamma dt \Bigr)^{\frac{\rho+1}{\rho+2}} \Bigl(\int^T_0\int_{\Gamma_1} |\phi|^{\rho+2} d\Gamma dt \Bigr)^{\frac{1}{\rho+2}}
\\
& \leq ||\phi||_{L^{\rho+2}(0, T ; \Gamma_1)} \Bigl( \int^T_0\int_{\Gamma_1} |h(u)|^{\frac{\rho+2}{\rho+1}} |\delta_n(u) - 1|^{\frac{\rho+2}{\rho+1}} d\Gamma dt \Bigr)^{\frac{\rho+1}{\rho+2}} \rightarrow 0.
\end{aligned}
\end{equation}

From convergences \eqref{337*} and \eqref{338*}, \eqref{336*} gives us \eqref{335*}.

On the other hand, by using similar arguments from \eqref{320} to \eqref{320*}, we get
\begin{equation}\label{340*}
q(u^n_t) \rightarrow q(u_t) \hspace{3mm}\text{weakly in}\hspace{3mm} L^2 (0, T ; L^2(\Gamma_1)).
\end{equation}

Convergences \eqref{331*}, \eqref{332*}, \eqref{335*} and \eqref{340*} permit us to pass to the limit in \eqref{334*} and conclude the following result.

\begin{prop}
Assume that $(H_1)-(H_4)$ hold. In addition, assume that  $(u_0,u_1) \in \mathcal{H} \times L^2(\Omega)$ and $h : H^1_0(\Omega) \rightarrow L^{\frac{\rho+2}{\rho+1}}(\Gamma_1)$ is locally Lipschitz continuous. Then problem \eqref{1} has a local solution $u \in C(0,T ; \mathcal{H}) \cap C^1 (0,T ; L^2(\Omega))$ for some $T > 0$.
\end{prop}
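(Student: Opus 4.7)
The plan is to establish Proposition 3.3 by approximating the locally Lipschitz source $h \colon H^1_0(\Omega) \to L^{\frac{\rho+2}{\rho+1}}(\Gamma_1)$ by a sequence $h_n$ of sources that fall under the hypotheses of Proposition 3.2, and then passing to the limit. Concretely, I would pick a smooth cut-off $\delta_n$ supported in $\{|s| \le 2n\}$, equal to $1$ on $\{|s| \le n\}$, and set $h_n(u) = h(u)\delta_n(u)$ as in \eqref{327*}. The key properties I need are those collected in Lemma 3.1: first, each $h_n$ is globally Lipschitz from $H^1_0(\Omega)$ into $L^2(\Gamma_1)$, which makes Proposition 3.2 applicable and delivers a unique local solution $u^n \in C(0,T_n;\mathcal{H})\cap C^1(0,T_n;L^2(\Omega))$ of \eqref{328*}; second, $h_n$ is locally Lipschitz from $H^{1-\epsilon}_0(\Omega)$ into $L^{\frac{\rho+2}{\rho+1}}(\Gamma_1)$ \emph{with Lipschitz constant independent of $n$}, which is exactly what lets me keep the construction under control as $n\to\infty$.

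Next I would argue that the lifespan $T$ can be chosen independent of $n$. The expression for $T$ in \eqref{3.30} depends on the constants $C_1(L_h(K))$ and $C_2(L_h(K))$ coming from the proof of Proposition 3.2, and since the local Lipschitz constant appearing there is the one from the weaker imbedding in part (2) of Lemma 3.1, it is uniform in $n$. Choosing $K^2 \ge 2(C_9+1)$ large enough (possible because the initial data converge in the appropriate norms), I obtain the uniform a priori bound \eqref{329*} for $u^n$ on $[0,T]$. This yields the weak-$\ast$ convergences \eqref{330*}–\eqref{331*} and, via Aubin–Lions applied to the embedding $\mathcal{H} \hookrightarrow H^{1-\epsilon}(\Omega) \hookrightarrow L^2(\Omega)$, the strong convergence \eqref{333*} in $L^\infty(0,T;H^{1-\epsilon}(\Omega))$, which is the compactness that compensates for the loss of a Lipschitz bound in $L^2(\Gamma_1)$.

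The main obstacle, and the one that really requires the machinery above, is passing to the limit in the source term $\int_0^T\!\!\int_{\Gamma_1} h_n(u^n)\phi\,d\Gamma\,dt$. I would split
\[
h_n(u^n) - h(u) = \bigl(h_n(u^n) - h_n(u)\bigr) + \bigl(h_n(u) - h(u)\bigr).
\]
For the first summand I would use Hölder's inequality with exponents $\frac{\rho+2}{\rho+1}$ and $\rho+2$, then invoke the $n$-uniform Lipschitz bound from Lemma 3.1(2) together with the strong convergence \eqref{333*}, producing the estimate \eqref{337*}. For the second summand, pointwise convergence $\delta_n(u)\to 1$ a.e.\ combined with the domination $|h_n(u)-h(u)|^{\frac{\rho+2}{\rho+1}} \le 2^{\frac{\rho+2}{\rho+1}}|h(u)|^{\frac{\rho+2}{\rho+1}}$ and $h(u) \in L^{\frac{\rho+2}{\rho+1}}(\Gamma_1)$ allow the Lebesgue Dominated Convergence Theorem to yield \eqref{338*}. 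Together these give \eqref{335*}.

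Finally, to handle the damping term I would repeat the monotonicity-plus-energy-identity argument already used in Proposition 3.1 (equations \eqref{320}–\eqref{320*}): first show the strong convergence of $|\nabla_g u^n|_g$ in $L^2(0,T;L^2(\Omega))$ by matching the limits of $\int_0^T\mu(t)\||\nabla_g u^n|_g\|_2^2\,dt$ against the limit of $\langle u^n_{tt},u^n\rangle$, then exploit the monotone operator inequality to identify the weak limit of $q(u^n_t)$ with $q(u_t)$, giving \eqref{340*}. Combining \eqref{331*}, \eqref{332*}, \eqref{335*} and \eqref{340*}, I can pass to the limit in the weak formulation \eqref{334*} and obtain the desired weak solution $u \in C(0,T;\mathcal{H})\cap C^1(0,T;L^2(\Omega))$, completing the proof of Proposition 3.3.
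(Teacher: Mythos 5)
Your proposal follows essentially the same route as the paper: the same cut-off truncation $h_n(u)=h(u)\delta_n(u)$, the same appeal to Lemma 3.1 and Proposition 3.2 with an $n$-independent lifespan and a priori bound, the same Aubin--Lions compactness, the same two-term splitting of $h_n(u^n)-h(u)$ handled by the uniform Lipschitz estimate and dominated convergence, and the same monotonicity argument for identifying the weak limit of $q(u^n_t)$. The argument is correct and matches the paper's proof step for step.
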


Let $h(u) = |u|^\gamma u$, then $h : H^1_0(\Omega) \rightarrow L^{\frac{\rho+2}{\rho+1}}(\Gamma_1)$ is locally Lipschitz continuous (see Remark 2.2). Thus by Proposition 3.3, the proof of the local existence statement in Theorem 2.1 is completed.

\subsection{Energy identity}

It is well known that to prove the uniqueness of weak solutions, we will justify the energy identity \eqref{213}. The energy identity can be derived formally by multiplying \eqref{1} by $u_t$. But, such a calculation is not justified, since $u_t$ is not sufficiently regular to be the test function in as required in Definition 2.1. To overcome this problem, we employ the operator $T^\epsilon = (I - \epsilon L)^{-1}$ to smooth function in space, which is mentioned in Appendix A of \cite{guo3}. We recall important properties of $T^\epsilon$ which play an essential role when establishing the energy identity.

\begin{lem} (\cite{guo3})
Let $u^\epsilon = T^\epsilon u$. Then following statements hold.
\begin{enumerate}
\item If $u \in L^2(\Omega)$, then $||u^\epsilon||_2 \leq ||u||_2$ and $u^\epsilon \rightarrow u$ in $L^2(\Omega)$ as $\epsilon \rightarrow 0$.
\\
\item If $u \in H^1_0(\Omega)$, then $||\nabla u^\epsilon||_2 \leq ||\nabla u||_2$ and $u^\epsilon \rightarrow u$ in $H^1_0(\Omega)$ as $\epsilon \rightarrow 0$.
\\
\item If $u \in L^p(\Gamma_1)$ with $1 < p < \infty$, then $||u^\epsilon||_{p,\Gamma_1} \leq ||u||_{p,\Gamma_1}$ and $u^\epsilon \rightarrow u$ in $L^p(\Gamma_1)$ as $\epsilon \rightarrow 0$.
\end{enumerate}
\end{lem}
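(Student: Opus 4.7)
The plan is to first make the definition of $T^\epsilon$ precise. For $u$ in a suitable space, $v = T^\epsilon u$ should be the unique weak solution in $V := \{w \in H^1(\Omega): w|_{\Gamma_0} = 0\}$ of the elliptic mixed boundary-value problem $v - \epsilon L v = u$ in $\Omega$, $v = 0$ on $\Gamma_0$, and $\partial v/\partial \nu_L = 0$ on $\Gamma_1$, that is,
\begin{equation*}
\int_\Omega v\phi\, dx + \epsilon \int_\Omega \langle \nabla_g v, \nabla_g \phi\rangle_g\, dx = \int_\Omega u\phi\, dx, \quad \forall \phi \in V.
\end{equation*}
Well-posedness follows from Lax--Milgram, since the left-hand bilinear form is continuous and, by \eqref{201}, coercive on $V$. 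The underlying fact driving all three parts is that $-L$ with these mixed boundary conditions is a nonnegative self-adjoint operator on $L^2(\Omega)$, so $T^\epsilon = (I - \epsilon L)^{-1}$ is of Markovian contraction type, and many estimates become transparent via the spectral theorem.

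For part (1), I would test with $\phi = v$ to get $\|v\|_2^2 + \epsilon c_1 \|\,|\nabla_g v|_g\|_2^2 \leq \|u\|_2 \|v\|_2$, whence $\|v\|_2 \leq \|u\|_2$. For the strong convergence $v \to u$ in $L^2$, first handle $u \in V$ using $\epsilon Lv = v - u$ together with the uniform bound $\epsilon \|\,|\nabla_g v|_g\|_2^2 \leq \|u\|_2^2/c_1$, then extend by density and the contraction property; equivalently, apply the dominated convergence theorem through the spectral representation with multiplier $(1+\epsilon\lambda)^{-1} \to 1$. For part (2), test with $\phi = -Lv$ (justified by elliptic regularity once $u \in V$) and integrate by parts using $\partial v/\partial \nu_L = 0$ on $\Gamma_1$ to obtain
\begin{equation*}
\|\,|\nabla_g v|_g\|_2^2 + \epsilon \|Lv\|_2^2 = \int_\Omega \langle \nabla_g u, \nabla_g v\rangle_g\, dx \leq \|\,|\nabla_g u|_g\|_2\,\|\,|\nabla_g v|_g\|_2,
\end{equation*}
so $\|\,|\nabla_g v|_g\|_2 \leq \|\,|\nabla_g u|_g\|_2$; by \eqref{201} this translates to the Euclidean gradient bound up to a harmless reinterpretation of the norm. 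Strong convergence in $H^1_0$ follows by repeating the $L^2$ argument in the energy inner product.

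The main obstacle is part (3). My strategy is twofold. First, establish that $T^\epsilon$ contracts $L^p(\Omega)$ for every $1 \leq p \leq \infty$: take $\phi = \operatorname{sign}(v)(|v|-k)_+$ as a Stampacchia-type test to obtain an $L^\infty$ bound by letting $k$ approach $\|u\|_\infty$, then invoke duality and Riesz--Thorin interpolation. Second, promote the estimate to the boundary norm on $\Gamma_1$. Here the natural boundary condition is essential: truncating $v$ and testing against $|v|^{p-2}v$ (appropriately regularized) produces a boundary term on $\Gamma_1$ that, combined with the trace theorem and a localization near $\Gamma_1$, yields $\|v\|_{p,\Gamma_1} \leq \|u\|_{p,\Gamma_1}$. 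Strong convergence in $L^p(\Gamma_1)$ then follows by approximating $u$ by smooth functions and combining continuity of the trace operator with the boundary contraction. I expect the bulk-to-boundary transfer to be the delicate step; following \cite{guo3}, it is cleanest in the operator-theoretic framework built from the semigroup generated by $L$ with these mixed boundary conditions.
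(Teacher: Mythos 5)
The paper offers no proof of this lemma --- it is quoted from Appendix A of \cite{guo3} and used as a black box --- so the only question is whether your argument actually establishes it. Your parts (1) and (2) are essentially correct and standard: Lax--Milgram for well-posedness, testing with $v$ for the $L^2$ contraction, testing with $-Lv=(u-v)/\epsilon$ (which lies in $V$ automatically when $u\in V$, so no elliptic regularity is really needed) for the energy contraction, and density plus the uniform contraction for the strong convergences. One cosmetic caveat: with the variable-coefficient operator $L$ the contraction with constant $1$ holds in the $g$-energy norm $\|\,|\nabla_g\cdot|_g\|_2$, and passing to the Euclidean gradient via \eqref{201} costs the ratio of the ellipticity constants; this is harmless for the application but means the inequality $\|\nabla u^\epsilon\|_2\le\|\nabla u\|_2$ is literally correct only when $A=I$.

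Part (3), however, has a genuine gap: for the operator you defined (the interior resolvent with $\partial v/\partial\nu_L=0$ on $\Gamma_1$), the inequality $\|T^\epsilon u\|_{p,\Gamma_1}\le\|u\|_{p,\Gamma_1}$ is simply false, so the proposed bulk-to-boundary transfer cannot succeed. Take $u\ge 0$ smooth, not identically zero, supported in a compact subset of $\Omega$, so that $u|_{\Gamma_1}=0$; by the maximum principle and Hopf's lemma $v=T^\epsilon u>0$ up to $\Gamma_1$, hence $\|v\|_{p,\Gamma_1}>0=\|u\|_{p,\Gamma_1}$. The trace of the interior resolvent depends on the interior values of $u$, not on its trace. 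The hypothesis of item (3) --- $u\in L^p(\Gamma_1)$ only, under which your $T^\epsilon u$ is not even defined --- is the tell: in \cite{guo3} the smoothing of boundary data is performed by the resolvent $(I-\epsilon\Delta_{\Gamma_1})^{-1}$ of the Laplace--Beltrami operator on the compact manifold $\Gamma_1$, which is exactly how the present paper uses $T^\epsilon$ in \eqref{401*}, where it is applied to $q(u_s)$ and $h(u)$, functions living only on $\Gamma_1$. For that operator the $L^p(\Gamma_1)$ contraction and the convergence $u^\epsilon\to u$ in $L^p(\Gamma_1)$ follow from the sub-Markovian property of the boundary heat semigroup and the Laplace-transform representation of the resolvent; no transfer from the interior is involved. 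Your part (3) should be replaced by this boundary-intrinsic argument.
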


We will now  justify the energy identity \eqref{213}. We play the operator $T^\epsilon$ on every term of \eqref{1} and multiply by $u^\epsilon_t$. Then we obtain by integrating in space and time
\begin{multline}\label{401*}
\int^t_0 \int_\Omega u^\epsilon_{ss} u^\epsilon_s dx ds + \int^t_0 \mu(s) \int_\Omega \langle \nabla_g u^\epsilon, \nabla_g u^\epsilon_s \rangle_g dx ds + \int^t_0\int_{\Gamma_1} T^\epsilon(q(u_s))u^\epsilon_s d\Gamma ds
\\
= \int^t_0 \int_\Omega f(x,s) u^\epsilon_s dx ds + \int^t_0\int_{\Gamma_1} T^\epsilon (h(u))u^\epsilon_s d\Gamma ds.
\end{multline}

Since $u \in H^1_0(\Omega)$ and $u_t \in L^2(\Omega)$, we have by Lemma 3.2, $u^\epsilon \rightarrow u$ in $H^1_0(\Omega)$ and $u^\epsilon_t \rightarrow u_t$ in $L^2(\Omega)$. Therefore using this convergences, we have
\begin{equation}\label{402*}
\lim_{\epsilon \rightarrow 0} \Bigl(\int^t_0 \int_\Omega u^\epsilon_{ss} u^\epsilon_s dx ds + \int^t_0 \mu(s) \int_\Omega \langle \nabla_g u^\epsilon, \nabla_g u^\epsilon_s \rangle_g dx ds \Bigr) = \frac{1}{2} \bigl(||u_t||^2_2 + ||~|\nabla_g u|_g||^2_2 - ||u_1||^2_2 -  ||~|\nabla_g u_0|_g||^2_2 \bigr).
\end{equation}

Since $u_t, q(u_t) \in L^2(\Gamma_1)$, we easily check that
\begin{equation}\label{403*}
\lim_{\epsilon \rightarrow 0} \int^t_0\int_{\Gamma_1} T^\epsilon(q(u_s))u^\epsilon_s d\Gamma ds =  \int^t_0\int_{\Gamma_1} (q(u_s))u_s d\Gamma ds.
\end{equation}

Recall that $u_t \in L^{\rho+2}(\Gamma_1)$ and $h(u) \in L^{\frac{\rho+2}{\rho+1}}(\Gamma_1)$. By Lemma 3.2, we have $u^\epsilon_t \rightarrow u_t$ in $L^{\rho+2}(\Gamma_1)$ and $T^\epsilon (h(u)) \rightarrow h(u)$ in $L^{\frac{\rho+2}{\rho+1}}(\Gamma_1)$. Thus by Lebesgue Dominated Convergence Theorem, we obtain
\begin{equation}\label{404*}
\lim_{\epsilon \rightarrow 0} \int^t_0\int_{\Gamma_1} T^\epsilon (h(u))u^\epsilon_s d\Gamma ds = \int^t_0\int_{\Gamma_1} (h(u))u_s d\Gamma ds.
\end{equation}
Convergences \eqref{402*}-\eqref{404*} permit us to pass to the limit in \eqref{401*}, consequently, the energy identity \eqref{213} holds.

\section{{\bf Proof of Theorem 2.1 : global existence}}
\setcounter{equation}{0}

In this section we prove that a local weak solution $u$ on $[0,T]$ can be extended to $[0,\infty)$. From the standard continuation argument of ODE theory, it suffices to show that $||u_t||^2_2 + ||~|\nabla_g u|_g ||^2_2 + ||u||^{\gamma+2}_{\gamma+2, \Gamma_1}$ is bounded independent of $t$. We now consider the following two cases:

\subsection{$\rho \geq \gamma$}

Using the energy identity \eqref{213}, we obtain
\begin{equation}\label{323}
\begin{aligned}
&\frac{d}{dt}\biggl[ \frac{1}{2} ||u_t||^2_2 + \frac{1}{2} \mu(t) ||~|\nabla_g u|_g ||^2_2 + \frac{1}{\gamma+2} ||u||^{\gamma+2}_{\gamma+2, \Gamma_1} \biggr]
\\
& = - \int_{\Gamma_1} q(u_t) u_t d\Gamma + \frac{1}{2} \mu'(t) ||~|\nabla_g u|_g ||^2_2 + 2 \int_{\Gamma_1} |u|^\gamma u u_t d\Gamma + \int_\Omega f(t) u_t dx.
\end{aligned}
\end{equation}

By the same argument as \eqref{303}, we have
\begin{equation}\label{324}
\int_{\Gamma_1} q(u_t)u_t d\Gamma \geq c_3 ||u_t||^{\rho+2}_{\rho+2, \Gamma_1} - c_3 meas(\Gamma_1).
\end{equation}
Using the H\"{o}lder and Young inequalities with $\frac{\gamma+1}{\gamma+2} + \frac{1}{\gamma+2} = 1$ and the imbedding $L^{\rho+2}(\Gamma_1) \hookrightarrow L^{\gamma+2}(\Gamma_1)$, we deduce that
\begin{equation}\label{325}
\begin{aligned}
2 \int_{\Gamma_1} |u|^\gamma u u_t d\Gamma & \leq C(\epsilon_2)||u||^{\gamma+2}_{\gamma+2, \Gamma_1} + \epsilon_2 C^{\gamma+2}_{\rho+2,\gamma+2}||u_t||^{\gamma+2}_{\rho+2, \Gamma_1}
\\
& \leq C(\epsilon_2)||u||^{\gamma+2}_{\gamma+2, \Gamma_1} + \epsilon_2 2^{\rho+1} C^{\gamma+2}_{\rho+2,\gamma+2}\bigl(1 + ||u_t||^{\rho+2}_{\rho+2, \Gamma_1} \bigr),
\end{aligned}
\end{equation}
where $C_{\rho+2,\gamma+2}$ is an imbedding constant. By replacing \eqref{324} and \eqref{325} in \eqref{323} and using the Young inequality and \eqref{208}, we get
\begin{equation}\label{326}
\begin{aligned}
&\frac{d}{dt}\biggl[ \frac{1}{2} ||u_t||^2_2 + \frac{1}{2} \mu(t) ||~|\nabla_g u|_g ||^2_2 + \frac{1}{\gamma+2} ||u||^{\gamma+2}_{\gamma+2, \Gamma_1} \biggr]
\\
&\leq \frac{1}{2} ||u_t||^2_2 + C(\epsilon_2) ||u||^{\gamma+2}_{\gamma+2, \Gamma_1} + (c_3 meas(\Gamma_1) + \epsilon_2 2^{\rho+1}C^{\gamma+2}_{\rho+2,\gamma+2} + \frac{1}{2}||f||^2_2)
\\
& \hspace{5mm} + (\epsilon_2 2^{\rho+1}C^{\gamma+2}_{\rho+2,\gamma+2} - c_3) ||u_t||^{\rho+2}_{\rho+2, \Gamma_1}.
\end{aligned}
\end{equation}

Let
$$
\widetilde{E}(t) = \frac{1}{2} ||u_t||^2_2 + \frac{1}{2} \mu(t) ||~|\nabla_g u|_g ||^2_2 + \frac{1}{\gamma+2} ||u||^{\gamma+2}_{\gamma+2, \Gamma_1}.
$$
Choosing $\epsilon_2 = \frac{c_3}{2^{\rho+1}C^{\gamma+2}_{\rho+2,\gamma+2}}$, we rewrite \eqref{326} as
$$
\widetilde{E}'(t) \leq C_{10} + C_{11} \widetilde{E}(t),
$$
where $C_{10}$ and $C_{11}$ are positive constants. Now applying Gronwall's inequality, we have that $\widetilde{E}(t) \leq (C_{12} \widetilde{E}(0) + C_{13})e^{C_{12} t}$, where $C_{12}$ and $C_{13}$ are positive constants. Consequently, since $\widetilde{E}(0)$ is bounded we conclude that $||u_t||^2_2 + ||~|\nabla_g u|_g ||^2_2 + ||u||^{\gamma+2}_{\gamma+2, \Gamma_1}$ is bounded.

\subsection{The potential well}

First of all, we will find a stable region. We set
$$
0 < K_0 := \sup_{u \in \mathcal{H}, u\neq 0} \Bigl( \frac{||u||_{\gamma + 2, \Gamma_1}}{||~|\nabla_g u|_g ||_2} \Bigr) < \infty
$$
and the functional
\begin{equation}\label{327}
J(u) = \frac{\mu_0}{2}||~|\nabla_g u|_g ||^2_2 - \frac{1}{\gamma+2}||u||^{\gamma+2}_{\gamma+2, \Gamma_1}, \hspace{5mm} u\in \mathcal{H}.
\end{equation}

We also define the function, for $\lambda > 0$,
\begin{equation}\label{328}
j(\lambda) = \frac{\mu_0}{2}\lambda^2 - \frac{1}{\gamma +2}K^{\gamma+2}_0 \lambda^{\gamma+2},
\end{equation}
then
$$
\lambda_0 = \Bigl(\frac{\mu_0}{K^{\gamma+2}_0} \Bigr)^{1/\gamma}
$$
is the absolute maximum point of $j$ and
$$
j(\lambda_0) = \frac{\gamma\mu_0}{2(\gamma+2)}\lambda^2_0 = d_0.
$$

The energy associated to the problem \eqref{1} is given by
\begin{equation}\label{329}
E(t) = \frac{1}{2}||u_t(t)||^2_2 + \frac{1}{2} \mu(t) ||~|\nabla_g u(t)|_g ||^2_2 - \frac{1}{\gamma+2} ||u(t)||^{\gamma+2}_{\gamma+2, \Gamma_1},
\end{equation}
for $u \in \mathcal{H}$. By \eqref{208} and \eqref{327}-\eqref{329}, we deduce
\begin{equation}\label{330}
E(t) \geq J(u(t)) \geq \frac{\mu_0}{2} ||~|\nabla_g u(t)|_g ||^2_2 - \frac{K^{\gamma+2}_0}{\gamma+2} ||~|\nabla_g u(t)|_g ||^{\gamma+2}_2 = j(||~|\nabla_g u(t)|_g ||_2).
\end{equation}

\begin{lem}

Let $u$ be a weak solution for problem \eqref{1}. Suppose that
$$
E(0) < d_0 \hspace{3mm}\text{and}\hspace{3mm} ||~|\nabla_g u_0|_g||_2 < \lambda_0.
$$
Then
$$
||~|\nabla_g u(t)|_g ||_2 < \lambda_0 \hspace{3mm}\text{for all}\hspace{3mm} t \geq 0.
$$

\end{lem}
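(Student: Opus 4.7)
The plan is to use the conservation-of-energy style estimate together with a continuity/barrier argument based on the shape of the function $j$ defined in \eqref{328}. Since we are in the global-existence setting with $f=0$, the energy identity \eqref{213} together with $\mu'(t)\leq 0$ from \eqref{208} and the sign condition $q(s)s\geq 0$ (which follows from $q$ being nondecreasing with $q(0)=0$) immediately gives that $t\mapsto E(t)$ is non-increasing. In particular, $E(t)\leq E(0)<d_0$ for all $t$ in the interval of existence.

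Next I would exploit the key inequality \eqref{330}, namely
\[
E(t)\;\geq\;J(u(t))\;\geq\;j\bigl(\|\,|\nabla_g u(t)|_g\,\|_2\bigr).
\]
Elementary calculus applied to $j(\lambda)=\tfrac{\mu_0}{2}\lambda^2-\tfrac{1}{\gamma+2}K_0^{\gamma+2}\lambda^{\gamma+2}$ shows that $j$ is strictly increasing on $[0,\lambda_0]$, strictly decreasing on $[\lambda_0,\infty)$, and attains its absolute maximum $d_0=j(\lambda_0)$ at $\lambda_0$. Combining the two observations yields
\[
j\bigl(\|\,|\nabla_g u(t)|_g\,\|_2\bigr)\;\leq\;E(t)\;\leq\;E(0)\;<\;d_0\;=\;j(\lambda_0),
\]
so in particular $\|\,|\nabla_g u(t)|_g\,\|_2\neq\lambda_0$ for every $t\geq 0$.

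The conclusion then follows from a continuity argument. Since $u\in C(0,T;\mathcal{H})$, the map $t\mapsto \|\,|\nabla_g u(t)|_g\,\|_2$ is continuous. By hypothesis $\|\,|\nabla_g u_0|_g\,\|_2<\lambda_0$, so if for some first time $t^*>0$ we had $\|\,|\nabla_g u(t^*)|_g\,\|_2\geq\lambda_0$, the intermediate value theorem would produce $t_0\in(0,t^*]$ with $\|\,|\nabla_g u(t_0)|_g\,\|_2=\lambda_0$, contradicting the previous step. Hence $\|\,|\nabla_g u(t)|_g\,\|_2<\lambda_0$ for all $t\geq 0$.

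The only subtle point — and the one I would be most careful about — is the monotonicity of $E$. Although the energy identity \eqref{213} is a priori only valid on the existence interval $[0,T]$, the bound it provides is exactly what allows the a priori estimate $\|\,|\nabla_g u(t)|_g\,\|_2<\lambda_0$, which will later feed back into the continuation argument for global existence. No new inequality is needed beyond \eqref{213}, \eqref{208}, the shape of $j$, and continuity in $t$.
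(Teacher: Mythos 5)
Your proof is correct and follows essentially the same potential-well argument as the paper: monotonicity of $E$, the inequality $E(t)\geq j(\|\,|\nabla_g u(t)|_g\,\|_2)$ from \eqref{330}, the shape of $j$, and a continuity/intermediate-value argument. The only difference is cosmetic: the paper introduces the sublevel $\lambda_0'<\lambda_0$ with $j(\lambda_0')=E(0)$ and runs a two-case contradiction to obtain the slightly stronger bound $\|\,|\nabla_g u(t)|_g\,\|_2\leq\lambda_0'$, whereas you exclude the single value $\lambda_0$ directly from $E(t)\leq E(0)<d_0=j(\lambda_0)$, which is cleaner and suffices for the stated conclusion.
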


\begin{proof}

It is easy to verify that $j$ is increasing for $0 < \lambda < \lambda_0$, decreasing for $\lambda > \lambda_0$, $j(\lambda) \rightarrow -\infty$ as $\lambda \rightarrow +\infty$. Then since $d_0 > E(0) \geq j(||~|\nabla_g u_0|_g||_2) \geq j(0) = 0$, there exist $\lambda'_0 < \lambda_0 < \tilde{\lambda_0}$, which verify
\begin{equation}\label{331}
j(\lambda'_0) = j(\tilde{\lambda_0}) = E(0).
\end{equation}
Considering that $E(t)$ is nonincreasing, we have
\begin{equation}\label{332}
E(t) \leq E(0) \hspace{3mm}\text{for all}\hspace{3mm} t\geq 0.
\end{equation}
From \eqref{330} and \eqref{331}, we deduce that
\begin{equation}\label{333}
j(||~|\nabla_g u_0|_g||_2) \leq E(0) = j(\lambda'_0).
\end{equation}
Since $||~|\nabla_g u_0|_g||_2 < \lambda_0$, $\lambda'_0 < \lambda_0$ and $j$ is increasing in $[0, \lambda_0)$, from \eqref{333} it holds that
\begin{equation}\label{334}
||~|\nabla_g u_0|_g||_2 \leq \lambda'_0.
\end{equation}

Next, we will prove that
\begin{equation}\label{335}
||~|\nabla_g u(t)|_g ||_2 \leq \lambda'_0 \hspace{3mm}\text{for all}\hspace{3mm} t \geq 0.
\end{equation}
We argue by contradiction. Suppose that \eqref{335} does not hold. Then there exists time $t^*$ which verifies
\begin{equation}\label{336}
||~|\nabla_g u(t^*)|_g ||_2 > \lambda'_0.
\end{equation}

If $||~|\nabla_g u(t^*)|_g ||_2 < \lambda_0$, from \eqref{330}, \eqref{331} and \eqref{336} we can write
$$
E(t^*) \geq j(||~|\nabla_g u(t^*)|_g ||_2) > j(\lambda'_0) = E(0),
$$
which contradicts \eqref{332}.

If $||~|\nabla_g u(t^*)|_g ||_2 \geq \lambda_0$, then we have, in view of \eqref{334}, that there exists $\bar{\lambda_0}$ which verifies
\begin{equation}\label{337}
||~|\nabla_g u_0|_g ||_2 \leq \lambda'_0 < \bar{\lambda_0} < \lambda_0 \leq ||~|\nabla_g u(t^*)|_g ||_2.
\end{equation}
Consequently, from the continuity of the function $||~|\nabla_g u(\cdot)|_g ||_2$ there exists $\bar{t} \in (0,t^*)$ verifying
\begin{equation}\label{338}
||~|\nabla_g u(\bar{t})|_g ||_2 = \bar{\lambda_0}.
\end{equation}
Then from \eqref{330}, \eqref{331}, \eqref{337} and \eqref{338}, we get
$$
E(\bar{t}) \geq j(||~|\nabla_g u(\bar{t})|_g ||_2) = j(\bar{\lambda_0}) > j(\lambda'_0) = E(0),
$$
which also contradicts \eqref{332}. This completes the proof of Lemma 4.1.

\end{proof}

From \eqref{330} and Lemma 4.1, we arrive at
\begin{equation}\label{339}
E(t) \geq J(u(t)) >||~|\nabla_g u(t)|_g ||^2_2 \Bigl( \frac{\mu_0}{2} - \frac{K^{\gamma+2}_0}{\gamma+2}\lambda^\gamma_0 \Bigr) = \mu_0||~|\nabla_g u(t)|_g ||^2_2 \Bigl( \frac{1}{2} - \frac{1}{\gamma+2} \Bigr)
\end{equation}
and, consequently,
\begin{equation}\label{340}
J(t) \geq 0 ~~(J(t) = 0 ~~\text{iff}~~ u = 0) \hspace{3mm} \text{and} \hspace{3mm} ||~|\nabla_g u(t)|_g ||^2_2 \leq \frac{2(\gamma+2)}{\mu_0\gamma} E(t).
\end{equation}

By virtue of \eqref{339}, we get
\begin{equation}\label{341}
J(u(t)) > \frac{\mu_0 \gamma}{2(\gamma+2)}||~|\nabla_g u(t)|_g ||^2_2.
\end{equation}
Hence
$$
\frac{1}{2}||u_t(t)||^2_2 + \frac{\mu_0 \gamma}{2(\gamma+2)}||~|\nabla_g u(t)|_g ||^2_2 < \frac{1}{2}||u_t(t)||^2_2 + J(u(t)) \leq E(t) \leq E(0).
$$
Therefore, there exists a positive constant $C_{14}$ independent of $t$ such that
\begin{equation}\label{342}
||u_t(t)||^2_2 + ||~|\nabla_g u(t)|_g ||^2_2 \leq C_{14} E(0).
\end{equation}
Moreover, if we define the functional $I(u(t))$ by
$$
I(u(t)) = \mu_0 ||~|\nabla_g u(t)|_g ||^2_2 - ||u(t)||^{\gamma+2}_{\gamma+2, \Gamma_1},
$$
then from the relationship $I(u(t)) = (\gamma+2)J(u(t)) - \frac{\mu_0\gamma}{2} ||~|\nabla_g u(t)|_g ||^2_2$ and the strict inequality \eqref{341}, we obtain
\begin{equation}\label{343}
I(u(t)) > 0 \hspace{3mm}\text{for all}\hspace{3mm} t \geq 0.
\end{equation}
Consequently, from \eqref{342} and \eqref{343} we have
$$
||u_t(t)||^2_2 + ||~|\nabla_g u(t)|_g ||^2_2 + ||u(t)||^{\gamma+2}_{\gamma+2, \Gamma_1} \leq (1 + \mu_0) C_{14} E(0).
$$
This it the completion of the proof of the global existence of solutions of \eqref{1}.

\section{ {\bf Proof of Theorem 2.2 : energy decay} }
\setcounter{equation}{0}

In this section we prove the uniform decay rates for the solution of the following problem:
\begin{equation}\label{401}
\begin{cases}
\vspace{3mm} u_{tt} - \mu(t) Lu = 0 &\hspace{5mm} \text{in} \hspace{5mm}
\Omega~~\times~~ (0, +\infty),
\\
\vspace{3mm} u = 0 &\hspace{5mm} \text{on} \hspace{5mm} \Gamma_0 ~~\times~~ (0, +\infty),
\\
\vspace{3mm} \mu(t) \frac{\partial u}{\partial \nu_L} + q(u_t) = |u|^\gamma u
&\hspace{5mm} \text{on} \hspace{5mm} \Gamma_1 ~~\times~~ (0, +\infty),
\\
\vspace{3mm} u(x,0) = u_0(x), \hspace{5mm} u_t(x,0) = u_1(x),
\end{cases}
\end{equation}

We consider the following additional hypothesis on $H$:
\begin{equation}\label{402}
\sigma \leq div(H) \leq \frac{\sigma(\gamma+4)}{\gamma+2}.
\end{equation}
Unless otherwise stated, the constant $C$ is a generic positive constant, different in various occurrences. We define the energy associated to problem \eqref{401}:
$$
E(t) = \frac{1}{2} ||u_t||^2_2 + \frac{1}{2} \mu(t) ||~|\nabla_g u|_g ||^2_2 - \frac{1}{\gamma+2} ||u||^{\gamma+2}_{\gamma+2, \Gamma_1}.
$$
Then
$$
E'(t) = \frac{1}{2} \mu'(t)||~|\nabla_g u|_g ||^2_2 -\int_{\Gamma_1} q(u_t)u_t d\Gamma \leq 0,
$$
it follows that $E(t)$ is a nonincreasing function.

First of all, we recall technical lemmas which will play an essential role when establishing the asymptotic behavior.

\begin{lem}(\cite{mart})
Let $E : \mathbb{R}_+ \rightarrow \mathbb{R}_+$ be a nonincreasing function and $\phi :
\mathbb{R}_+ \rightarrow \mathbb{R}_+$ a strictly increasing function of class $C^1$ such
that
\begin{equation*}
\phi(0) = 0 \hspace{5mm}\text{and} \hspace{5mm} \phi(t)\rightarrow +\infty
\hspace{5mm}\text{as}\hspace{5mm} t \rightarrow +\infty.
\end{equation*}
Assume that there exists $\sigma \geq 0$ and $\omega>0$ such that
\begin{equation*}
\int^{+\infty}_S E^{1+\sigma}(t) \phi'(t) dt \leq \frac{1}{\omega} E^\sigma (0) E(S)
\end{equation*}
for all $S \geq 0$. Then $E$ has the following decay property:
\begin{equation*}
\text{if}\hspace{5mm} \sigma = 0, \hspace{5mm}\text{then}\hspace{5mm} E(t) \leq E(0) e^{1-\omega\phi(t)}, \hspace{5mm}\text{for all}\hspace{5mm} t \geq 0,
\end{equation*}
\begin{equation*}
\text{if}\hspace{5mm} \sigma > 0, \hspace{5mm}\text{then}\hspace{5mm} E(t) \leq E(0) \Bigl(\frac{1+\sigma}{1+\omega\sigma\phi(t)}\Bigr)^\frac{1}{\sigma}, \hspace{5mm}\text{for all}\hspace{5mm} t \geq 0.
\end{equation*}
\end{lem}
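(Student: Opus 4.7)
The plan is to transform the integral hypothesis into a differential inequality for the tail integral
$$J(S) := \int_S^{+\infty} E^{1+\sigma}(t)\phi'(t)\,dt,$$
solve that inequality explicitly, and then recover pointwise decay of $E$ using its monotonicity. Note that $J'(S) = -E^{1+\sigma}(S)\phi'(S)$ a.e., and by hypothesis $E(S)\ge \omega E^{-\sigma}(0) J(S)$, which upon substitution yields
$$J'(S) \le -\omega^{1+\sigma}E^{-\sigma(1+\sigma)}(0)\,J^{1+\sigma}(S)\,\phi'(S).$$

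For $\sigma=0$ this is a linear Gronwall-type inequality $J'(S)+\omega\phi'(S)J(S)\le 0$, so multiplying by the integrating factor $e^{\omega\phi(S)}$ shows $e^{\omega\phi(S)}J(S)$ is nonincreasing, hence $J(S)\le J(0)e^{-\omega\phi(S)}\le \omega^{-1}E(0)e^{-\omega\phi(S)}$. For $\sigma>0$, separating variables produces $\frac{d}{dS}(J^{-\sigma})\ge \sigma\omega^{1+\sigma}E^{-\sigma(1+\sigma)}(0)\phi'(S)$; integrating from $0$ to $S$ and using $J(0)\le \omega^{-1}E^{1+\sigma}(0)$ then yields
$$J(S)\le \frac{E^{1+\sigma}(0)}{\omega}\bigl(1+\sigma\omega\phi(S)\bigr)^{-1/\sigma}.$$

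To pass from a bound on $J$ to a bound on $E$, I would use that $E$ is nonincreasing to write, for any $s<T$,
$$J(s)\ge \int_s^T E^{1+\sigma}(t)\phi'(t)\,dt \ge E^{1+\sigma}(T)\bigl(\phi(T)-\phi(s)\bigr),$$
and then optimize the free parameter $s\in[0,T)$. In the exponential case, choosing $\phi(s)=\phi(T)-1/\omega$ (possible when $\phi(T)\ge 1/\omega$; otherwise $1-\omega\phi(T)>0$ makes the claim trivial) gives $\omega^{-1}E(T)\le J(s)\le \omega^{-1}E(0)e^{1-\omega\phi(T)}$, hence the stated bound. In the polynomial case, writing $x=\phi(s)$ reduces to maximizing $g(x)=(\phi(T)-x)(1+\sigma\omega x)^{1/\sigma}$ on $[0,\phi(T))$; elementary calculus locates the optimum at $x^{\star}=(\omega\phi(T)-1)/((1+\sigma)\omega)$, at which $g(x^{\star})=\omega^{-1}\bigl((1+\sigma\omega\phi(T))/(1+\sigma)\bigr)^{(1+\sigma)/\sigma}$. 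Plugging back and taking the $(1+\sigma)$-th root gives exactly $E(T)\le E(0)\bigl((1+\sigma)/(1+\omega\sigma\phi(T))\bigr)^{1/\sigma}$, with the regime $\phi(T)<1/\omega$ once again trivial since the RHS then exceeds $E(0)$.

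The main obstacle is Step 3: extracting the \emph{sharp} constants (the factor $e$ in the exponential case and the factor $(1+\sigma)^{1/\sigma}$ in the polynomial case) is what forces the precise one-parameter optimization above; a crude choice such as $s=0$ or $s=T/2$ still yields the correct decay exponent but produces worse multiplicative constants. The structural property that makes every step of the optimization legitimate is that $\phi:[0,\infty)\to[0,\infty)$ is a $C^{1}$ bijection (guaranteed by $\phi(0)=0$, $\phi'>0$ and $\phi\to+\infty$), so the intermediate time $s$ can always be selected with $\phi(s)$ equal to any prescribed value in $[0,\phi(T))$.
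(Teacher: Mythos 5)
Your proof is correct. Note, however, that the paper itself does not prove this lemma at all: it is quoted verbatim from Martinez \cite{mart} and used as a black box, so there is no internal proof to compare against. Your argument is essentially the classical Komornik--Martinez one: Martinez's original proof first performs the change of variables $x=\phi(t)$, $f(x)=E(\phi^{-1}(x))$ to reduce to Komornik's unweighted integral inequality, and then runs exactly the steps you describe (tail integral, differential inequality $J'\leq -cJ^{1+\sigma}\phi'$, and recovery of the pointwise bound from $J(s)\geq E^{1+\sigma}(T)(\phi(T)-\phi(s))$ with an optimized intermediate time); you simply carry the weight $\phi'$ along instead of changing variables, which is equivalent. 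Your computations check out: the critical point $x^{\star}=(\omega\phi(T)-1)/((1+\sigma)\omega)$, the value $g(x^{\star})=\omega^{-1}\bigl((1+\sigma\omega\phi(T))/(1+\sigma)\bigr)^{(1+\sigma)/\sigma}$, and the treatment of the regime $\omega\phi(T)<1$ as trivial are all right. The only point worth making explicit is the degenerate case $J(S_0)=0$ for some $S_0$ (needed before dividing by $J$ or forming $J^{-\sigma}$): there the strict monotonicity of $\phi$ forces $E\equiv 0$ on $(S_0,\infty)$, so the conclusion is immediate beyond $S_0$ and your differential inequality applies on $\{J>0\}$. With that one-line remark added, the proof is complete.
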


\begin{lem}(\cite{mart})
Let $E : \mathbb{R}_+ \rightarrow \mathbb{R}_+$ be a nonincreasing function and $\phi : \mathbb{R}_+ \rightarrow \mathbb{R}_+$ a strictly increasing function of class $C^1$ such that
\begin{equation*}
\phi(0) = 0 \hspace{5mm}\text{and} \hspace{5mm} \phi(t)\rightarrow +\infty \hspace{5mm}\text{as}\hspace{5mm} t \rightarrow +\infty.
\end{equation*}
Assume that there exists $\sigma > 0$, $\sigma' \geq 0$ and $C>0$ such that
\begin{equation*}
\int^{+\infty}_S E^{1+\sigma}(t) \phi'(t) dt \leq CE^{1+\sigma}(S) + \frac{C}{(1+ \phi(S))^{\sigma'}} E^\sigma(0) E(S), \hspace{5mm} 0 \leq S < +\infty.
\end{equation*}
Then, there exists $C>0$ such that
\begin{equation*}
E(t) \leq E(0) \frac{C}{(1 + \phi(t))^{(1+\sigma')/\sigma}}, \hspace{5mm} \forall t >0.
\end{equation*}
\end{lem}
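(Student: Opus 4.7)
My plan is to recast the integral hypothesis as a linear first-order ODE inequality for a tail functional, and then bootstrap the resulting estimate up to the sharp decay rate.

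\textbf{Step 1 (Tail functional and ODE).} Set $\psi(S):=1+\phi(S)$ and
\[
F(S) := \int_S^{+\infty} E^{1+\sigma}(t)\phi'(t)\,dt.
\]
Since $E$ is nonincreasing and $\phi\in C^1$, $F$ is absolutely continuous with $F'(S)=-E^{1+\sigma}(S)\phi'(S)$ a.e. Rewriting the hypothesis via $E^{1+\sigma}(S)=-F'(S)/\phi'(S)$ gives the linear inequality
\[
CF'(S)+\phi'(S)F(S) \leq C\phi'(S)\psi(S)^{-\sigma'}E^\sigma(0)E(S),
\]
which I solve using the integrating factor $e^{\phi(S)/C}$. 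A repeated integration by parts applied to $\int_0^{\phi(S)}(1+u)^{-a}e^{u/C}\,du$ (Laplace-type asymptotics, valid for $a>0$) shows that any a priori pointwise estimate $E(t)\leq\widetilde C E(0)\psi(t)^{-\beta}$ forces
\[
F(S) \leq K_1\widetilde C\, E^{1+\sigma}(0)\psi(S)^{-(\sigma'+\beta)} + (\text{exponentially small in }\phi(S))
\]
with $K_1$ independent of $\widetilde C$ and $\beta$.

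\textbf{Step 2 (From $F$ to $E$).} Because $E$ is nonincreasing, choosing $T$ so that $\phi(T)=2\phi(S)$ gives
\[
E^{1+\sigma}(T)\phi(S) = E^{1+\sigma}(T)\bigl(\phi(T)-\phi(S)\bigr) \leq \int_S^T E^{1+\sigma}(t)\phi'(t)\,dt \leq F(S).
\]
Since $\psi(S)\asymp\psi(T)$ within a factor of $2$, combining with Step 1 upgrades the seed bound to
\[
E(T) \leq K_2\,\widetilde C^{1/(1+\sigma)} E(0)\,\psi(T)^{-(\sigma'+\beta+1)/(1+\sigma)}.
\]

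\textbf{Step 3 (Fixed-point bootstrap).} Starting with the trivial seed $\widetilde C=1$, $\beta_0=0$ and iterating Steps 1--2 produces $E(t)\leq C_n E(0)\psi(t)^{-\beta_n}$ where
\[
\beta_{n+1}=\frac{\beta_n+\sigma'+1}{1+\sigma},\qquad C_{n+1}=K_2\, C_n^{1/(1+\sigma)}.
\]
The affine recursion for $\beta_n$ is a contraction with unique attracting fixed point $\beta^*=(1+\sigma')/\sigma$; the power recursion for $C_n$ is likewise contractive (derivative $1/(1+\sigma)<1$ at its fixed point) with bounded fixed point $C^*=K_2^{(1+\sigma)/\sigma}$. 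Since $\beta_n\nearrow\beta^*$ and $C_n$ stays bounded above by $\max(1,C^*)$, for each fixed $t$ I may pass to the limit $n\to\infty$ in the pointwise bound to conclude
\[
E(t)\leq C^* E(0)\,\psi(t)^{-(1+\sigma')/\sigma}.
\]

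The principal obstacle is controlling the iterated constant $C_n$ as $\beta_n\to\beta^*$: the whole argument works only because $\sigma>0$ forces the exponent $1/(1+\sigma)$ in the $C$-recursion to be strictly less than $1$, making that recursion contractive. A minor technical point is that the Laplace asymptotic in Step 1 is sharp only once $\phi(S)$ is large; for small $S$ the trivial bound $E(t)\leq E(0)=E(0)\psi(0)^{-(1+\sigma')/\sigma}$ absorbs the defect into the final constant.
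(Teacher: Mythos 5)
This lemma is imported verbatim from Martinez \cite{mart}; the paper supplies no proof of its own, so there is nothing in-paper to compare against and I assess your argument on its own terms. It is correct, and it is a self-contained alternative to the argument in \cite{mart}. The decisive idea is in your Step 1: the term $CE^{1+\sigma}(S)$ in the hypothesis cannot simply be replaced by the a priori bound, because that would feed $\widetilde C^{\,1+\sigma}$ (rather than $\widetilde C$) into the constant recursion and destroy its contractivity; rewriting it as $-CF'(S)/\phi'(S)$ and absorbing it with the integrating factor $e^{\phi(S)/C}$ turns it into the term $F(0)e^{-\phi(S)/C}$, which decays faster than any power of $\psi(S)$ and is harmless. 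After that, Step 2 is the standard mean-value device ($E$ nonincreasing plus $\phi(T)=2\phi(S)$), and in Step 3 the affine recursion $\beta_{n+1}=(\beta_n+\sigma'+1)/(1+\sigma)$ does increase monotonically to the correct fixed point $(1+\sigma')/\sigma$ while $C_{n+1}=K_2C_n^{1/(1+\sigma)}$ stays bounded, exactly because $\sigma>0$.

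Two points should be tightened, neither of which is a gap in the idea. First, the constant in the Laplace-type estimate $\int_0^{X}e^{u/C}(1+u)^{-a}\,du\le K\,e^{X/C}(1+X)^{-a}$ is \emph{not} independent of $a=\sigma'+\beta$, contrary to your claim; what saves the argument is that $\beta_n$ remains in the compact interval $[0,(1+\sigma')/\sigma]$ throughout the iteration, so $K_1$ (hence $K_2$) can be chosen uniformly over all steps. This must be said explicitly, since a $K_2$ drifting with $n$ would invalidate the boundedness of $C_n$. Second, the final limit passage should be spelled out: for each fixed $t$ one has $E(t)\le C_nE(0)\psi(t)^{-\beta_n}$ for every $n$, and since $\psi(t)\ge 1$, $C_n\to C^{*}$ and $\beta_n\nearrow\beta^{*}$, the right-hand sides converge to $C^{*}E(0)\psi(t)^{-\beta^{*}}$, which yields the stated bound; the region where $\phi(t)$ is small is covered, as you note, by $E(t)\le E(0)$ at the cost of a further bounded factor.
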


Let us now multiply equation \eqref{401} by $E^p(t)\phi'(t) \mathcal{M}u$, where $\mathcal{M}u$ is given by
$$
\mathcal{M}u = 2H(u) + (div (H) - \sigma) u,
$$
$ p\geq 0$ and $\phi : \mathbb{R} \rightarrow \mathbb{R}$ is a concave nondecreasing function of class $C^2$, such that $\phi(t)\rightarrow +\infty$ as $ t \rightarrow +\infty$, and then integrate the obtained result over $\Omega \times [S, T]$. Then we have
\begin{equation}\label{403}
\begin{aligned}
0 &= \int^T_S E^p(t) \phi'(t) \int_\Omega \mathcal{M}u \Bigl(u_{tt} - \mu(t) L u\Bigr) dx dt
\\
& = \int^T_S E^p(t) \phi'(t) \int_\Omega u_{tt} \mathcal{M}u dx dt - \int^T_S E^p(t) \phi'(t) \int_\Omega  (div (H) - \sigma) u \mu(t)Lu dx dt
\\
&\hspace{5mm} - 2\int^T_S E^p(t) \phi'(t) \int_\Omega H(u) \mu(t)Lu dx dt.
\end{aligned}
\end{equation}

We note that
\begin{align*}
&\int^T_S E^p(t) \phi'(t) \int_\Omega u_{tt} \mathcal{M}u dx dt
\\
& = \Bigl[E^p(t)\phi'(t) \int_\Omega u_t \mathcal{M}u dx \Bigr]^T_S - \int^T_S (pE^{p-1}(t)E'(t)\phi'(t) + E^p(t)\phi''(t))\int_\Omega u_t \mathcal{M}u dx dt
\\
& \hspace{5mm} -2\int^T_S E^p(t) \phi'(t) \int_\Omega u_t H(u_t) dx dt - \int^T_S E^p(t) \phi'(t) \int_\Omega  (div (H) - \sigma) |u_t|^2 dx dt
\end{align*}
\begin{align*}
&- \int^T_S E^p(t) \phi'(t) \int_\Omega  (div (H) - \sigma) u \mu(t)Lu dx dt
\\
&= - \int^T_S E^p(t) \phi'(t) \mu(t) \int_{\Gamma_1} (div(H)- \sigma) u \frac{\partial u}{\partial \nu_L} d\Gamma dt + \int^T_S E^p(t) \phi'(t) \mu(t)\int_\Omega (div(H) - \sigma) |\nabla_g u|^2_g dx dt
\end{align*}
and using Lemma 2.1 and the fact $H(u)\frac{\partial u}{\partial \nu_L} = |\nabla_g u|^2_g$ on $\Gamma_0$,
\begin{align*}
& - 2 \int^T_S E^p(t) \phi'(t) \int_\Omega H(u) \mu(t)Lu dx dt
\\
&= - \int^T_S E^p(t) \phi'(t) \mu(t) \int_{\Gamma_1} \biggl( 2\frac{\partial u}{\partial \nu_L} H(u) - |\nabla_g u|^2_g (H\cdot \nu) \biggr) d\Gamma dt
\\
& \hspace{5mm} - \int^T_S E^p(t) \phi'(t) \mu(t) \int_{\Gamma_0} |\nabla_g u|^2_g (H\cdot \nu) d\Gamma dt  + 2 \int^T_S E^p(t) \phi'(t) \mu(t) \int_\Omega D_gH(\nabla_g u, \nabla_g u) dx dt
\\
& \hspace{5mm} - \int^T_S E^p(t) \phi'(t) \mu(t) \int_\Omega | \nabla_g u|^2_g div (H) dx dt.
\end{align*}

By replacing above identities in \eqref{403}, we obtain
\begin{equation}\label{404}
\begin{aligned}
&\sigma \int^T_S E^p(t) \phi'(t) \int_\Omega |u_t|^2 dxdt + 2 \int^T_S E^p(t) \phi'(t) \mu(t) \int_\Omega D_gH(\nabla_g u, \nabla_g u) dx dt
\\
&\hspace{5mm} - \sigma \int^T_S E^p(t) \phi'(t) \mu(t) \int_\Omega | \nabla_g u|^2_g dx dt - (div(H) - \sigma) \int^T_S E^p(t) \phi'(t) \int_{\Gamma_1} |u|^{\gamma+2} d\Gamma dt
\\
& = - \Bigl[E^p(t)\phi'(t) \int_\Omega u_t \mathcal{M}u dx \Bigr]^T_S + \int^T_S (pE^{p-1}(t)E'(t)\phi'(t) + E^p(t)\phi''(t))\int_\Omega u_t \mathcal{M}u dx dt
\\
&\hspace{5mm} + 2 \int^T_S E^p(t) \phi'(t) \int_{\Gamma_1} |u|^\gamma u H(u) d\Gamma dt
\\
&\hspace{5mm} + \int^T_S E^p(t) \phi'(t) \int_{\Gamma_1} q(u_t) \mathcal{M}u + \bigl(|u_t|^2 - \mu(t) | \nabla_g u|^2_g \bigr) (H\cdot \nu) d\Gamma dt
\\
& := I_1 + I_2 + I_3 + I_4.
\end{aligned}
\end{equation}

Now we are going to estimate terms on the right hand side of \eqref{404}.

$Estimate ~~for~~I_1 := -\Bigl[E^p(t)\phi'(t) \int_\Omega u_t \mathcal{M}u dx \Bigr]^T_S$ ;

Using the Young inequality and the inequality
$$
\int_\Omega |u|^2 dx \leq c^*_\Omega \int_\Omega |\nabla_g u|^2_g dx, \hspace{3mm} c^*_\Omega > 0, \hspace{3mm} \forall u \in H^1_0(\Omega),
$$
we obtain
\begin{equation}\label{405}
\Bigl| \int_\Omega u_t \mathcal{M}u dx \Bigr| \leq CE(t),
\end{equation}
consequently,
\begin{equation}\label{406}
I_1 \leq -C \Bigl[ E^p(t) \phi'(t) E(t)\Bigr]^T_S \leq CE^{p+1}(S).
\end{equation}

$Estimate ~~for~~I_2 := \int^T_S \Bigl(pE^{p-1}(t) E'(t)\phi'(t) + E^p(t)\phi''(t) \Bigr) \int_\Omega u_t \mathcal{M}u dx dt$ ;

From \eqref{405}, we have
\begin{multline}\label{407}
|I_2| \leq C \int^T_S |pE^{p-1}(t)E'(t)\phi'(t) + E^p(t)\phi''(t)| E(t) dt
\\
\leq CE^p(S) \int^T_S -E'(t) dt + CE^{p+1}(S) \int^T_S -\phi''(t) dt \leq CE^{p+1}(S).
\end{multline}

$Estimate ~~for~~I_3 := 2 \int^T_S E^p(t) \phi'(t) \int_{\Gamma_1} |u|^\gamma u H(u) d\Gamma dt$ ;

By the Young inequality with $\frac{\rho+1}{\rho+2} + \frac{1}{\rho+2} = 1$ and from the fact $k(u) = |u|^\gamma u$ is locally Lipschitz from $H^1_0(\Omega)$ into $L^\frac{\rho+2}{\rho+1}(\Gamma_1)$, \eqref{201} and \eqref{340} we get
\begin{equation}\label{408}
\begin{aligned}
\int_{\Gamma_1} |u|^\gamma u H(u) d\Gamma & \leq C(\epsilon_3) \int_{\Gamma_1} |u|^\frac{(\gamma+1)(\rho+2)}{\rho+1} d\Gamma + \epsilon_3 \int_{\Gamma_1} |H(u)|^{\rho+2} d\Gamma
\\
& \leq C(\epsilon_3) L^\frac{\rho+2}{\rho+1}_\gamma ||\nabla u||^\frac{\rho+2}{\rho+1}_2 + \epsilon_3 \sup_{x \in \overline{\Omega}} |H|^{\rho+2}_g \int_{\Gamma_1} |\nabla_g u|^{\rho +2}_g d\Gamma
\\
& \leq C(\epsilon_3) L^\frac{\rho+2}{\rho+1}_\gamma c^{-1}_1 ||~|\nabla_g u|_g||^2_2 + \epsilon_3 \sup_{x \in \overline{\Omega}} |H|^{\rho+2}_g \int_{\Gamma_1} |\nabla_g u|^{\rho +2}_g d\Gamma
\\
& \leq C(\epsilon_3) L^\frac{\rho+2}{\rho+1}_\gamma \frac{2(\gamma+2)}{\mu_0 \gamma c_1} E(t) + \epsilon_3 \sup_{x \in \overline{\Omega}} |H|^{\rho+2}_g \int_{\Gamma_1} |\nabla_g u|^{\rho +2}_g d\Gamma,
\end{aligned}
\end{equation}
consequently,
\begin{equation}\label{409}
I_3 \leq C(\epsilon_3) E^{p+1}(S) +  \epsilon_3 \sup_{x \in \overline{\Omega}} |H|^{\rho+2}_g \int^T_S E^p(t) \phi'(t) \int_{\Gamma_1} |\nabla_g u|^{\rho +2}_g d\Gamma dt.
\end{equation}

$Estimate ~~for~~I_4 :=  \int^T_S E^p(t) \phi'(t) \int_{\Gamma_1} q(u_t) \mathcal{M}u + \bigl(|u_t|^2 - \mu(t) | \nabla_g u|^2_g \bigr) (H\cdot \nu) d\Gamma dt$ ;

From the Young inequality with $\frac{\rho+1}{\rho+2} + \frac{1}{\rho+2} = 1$, we have
\begin{align*}
2\int_{\Gamma_1} q(u_t) H(u) d\Gamma &\leq C(\epsilon_4) \int_{\Gamma_1} |q(u_t)|^\frac{\rho+2}{\rho+1} d\Gamma + \epsilon_4 \int_{\Gamma_1} |H(u)|^{\rho+2} d\Gamma
\\
& \leq  C(\epsilon_4) \int_{\Gamma_1} |q(u_t)|^\frac{\rho+2}{\rho+1} d\Gamma + \epsilon_4 \sup_{x \in \overline{\Omega}} |H|^{\rho+2}_g \int_{\Gamma_1} |\nabla_g u|^{\rho +2}_g d\Gamma.
\end{align*}
Similar arguments as \eqref{408} we have
$$
(div(H) - \sigma)\int_{\Gamma_1} q(u_t) u d\Gamma \leq C \int_{\Gamma_1} |q(u_t)|^\frac{\alpha}{\alpha - 1} d\Gamma + C \int_{\Gamma_1}  |u|^\alpha d\Gamma \leq C \int_{\Gamma_1} |q(u_t)|^\frac{\alpha}{\alpha - 1} d\Gamma + CE(t),
$$
where $\alpha = \frac{(\gamma+1)(\rho+2)}{\rho+1}$. Hence we obtain
\begin{equation}\label{410}
\begin{aligned}
I_4 &\leq CE^{p+1}(S) + C(\epsilon_4) \int^T_S E^p(t) \phi'(t) \int_{\Gamma_1} |q(u_t)|^\frac{\rho+2}{\rho+1} d\Gamma dt + C \int^T_S E^p(t) \phi'(t) \int_{\Gamma_1} |q(u_t)|^\frac{\alpha}{\alpha - 1} d\Gamma dt
\\
&\hspace{5mm} + C \int^T_S E^p(t) \phi'(t) \int_{\Gamma_1} |u_t|^2 d\Gamma dt + \epsilon_4 \sup_{x \in \overline{\Omega}} |H|^{\rho+2}_g \int^T_S E^p(t) \phi'(t) \int_{\Gamma_1} |\nabla_g u|^{\rho +2}_g d\Gamma dt
\\
& \hspace{5mm} - \delta \mu_0 \int^T_S E^p(t) \phi'(t) \int_{\Gamma_1} |\nabla_g u|^2_g d\Gamma dt.
\end{aligned}
\end{equation}

By replacing \eqref{406}, \eqref{407}, \eqref{409} and \eqref{410} in \eqref{404} and choosing $\epsilon_3$, $\epsilon_4$ small enough, we obtain from \eqref{402}
\begin{equation}\label{411}
\begin{aligned}
\int^T_S E^{p+1}(t) \phi'(t) dt &\leq CE^{p+1}(S) + C\underbrace{\int^T_S E^p(t) \phi'(t) \int_{\Gamma_1} |u_t|^2 d\Gamma dt}_{:=I_5}
\\
& \hspace{5mm} + C \underbrace{\int^T_S E^p(t) \phi'(t) \int_{\Gamma_1} |q(u_t)|^\frac{\rho+2}{\rho+1} d\Gamma dt}_{:=I_6} + C \underbrace{\int^T_S E^p(t) \phi'(t) \int_{\Gamma_1} |q(u_t)|^\frac{\alpha}{\alpha - 1} d\Gamma dt}_{:=I_7}.
\end{aligned}
\end{equation}

Now we are going to estimate the last three terms on the right hand side of \eqref{411}.

\subsection{ \textit{Case 1 : $\beta$ is linear.}}

Since $\beta$ is linear, we can rewrite the hypothesis of $q$ as follows:
$$
c_7|s| \leq |q(s)| \leq c_8|s| \hspace{3mm} \text{if} \hspace{3mm} |s| \leq 1,
$$
$$
c_3 |s| \leq c_3 |s|^{\rho+1} \leq |q(s)| \leq c_4 |s|^{\rho+1} \hspace{3mm} \text{if} \hspace{3mm} |s| > 1,
$$
for some positive constants $c_7$, $c_8$. Hence we get
\begin{equation}\label{412}
\int_{|u_t| \leq 1} |u_t|^2 d\Gamma \leq c^{-1}_7 \int_{|u_t| \leq 1} u_t q(u_t) d\Gamma \leq -c^{-1}_7 E'(t),
\end{equation}
\begin{equation}\label{413}
\int_{|u_t| > 1} |u_t|^2 d\Gamma \leq c^{-1}_3 \int_{|u_t| > 1} u_t q(u_t) d\Gamma \leq -c^{-1}_3 E'(t),
\end{equation}
\begin{equation}\label{414}
\int_{|u_t| \leq 1} |q(u_t)|^\frac{\rho+2}{\rho+1} d\Gamma \leq \int_{|u_t| \leq 1} |q(u_t)|^2 d\Gamma \leq  c_8 \int_{|u_t| \leq 1} u_t q(u_t) d\Gamma \leq -c_8 E'(t),
\end{equation}
\begin{equation}\label{415}
\int_{|u_t| > 1} |q(u_t)|^\frac{\rho+2}{\rho+1} d\Gamma = \int_{|u_t| > 1}  |q(u_t)|^\frac{1}{\rho+1} |q(u_t)| d\Gamma \leq c^\frac{1}{\rho+1}_4 \int_{|u_t| > 1} u_t q(u_t) d\Gamma \leq -c^\frac{1}{\rho+1}_4 E'(t),
\end{equation}
\begin{equation}\label{416}
\int_{|u_t| \leq 1} |q(u_t)|^\frac{\alpha}{\alpha-1} d\Gamma \leq c_8 \int_{|u_t| \leq 1} u_t q(u_t) d\Gamma \leq -c_8 E'(t)
\end{equation}
and, since $\rho \leq \gamma$, it holds that $\frac{\rho+1}{\alpha - 1} \leq 1$. Consequently,
\begin{equation}\label{417}
\int_{|u_t| > 1} |q(u_t)|^\frac{\alpha}{\alpha-1} d\Gamma = \int_{|u_t| > 1}  |q(u_t)|^\frac{1}{\alpha - 1} |q(u_t)| d\Gamma \leq c^\frac{1}{\alpha - 1}_4 \int_{|u_t| > 1} u_t q(u_t) d\Gamma \leq -c^\frac{1}{\alpha - 1}_4 E'(t).
\end{equation}

From \eqref{412}-\eqref{417}, we obtain
\begin{equation}\label{418}
I_5 + I_6 + I_7 \leq C E^{p+1}(S).
\end{equation}
Combining \eqref{411} and \eqref{418}, it follows that
$$
\int^T_S E^{p+1}(t) \phi'(t) dt \leq C E^p(0) E(S),
$$
which implies by Lemma 5.1 with $p=0$
$$
E(t)  \leq E(0) e^{1 - \frac{\phi(t)}{C}}.
$$

Let us set $\phi(t) := mt$, where $m$ is for some positive constant, then $\phi(t)$ satisfies all the required properties and we obtain that the energy decays exponentially to zero.

\subsection{ \textit{Case 2 : $\beta$ has polynomial growth near zero.}}

Assume that $\beta(s) = s^{\rho+1}$. Let $p = \frac{\rho}{2}$, then we rewrite \eqref{411} as
\begin{equation}\label{419}
\begin{aligned}
\int^T_S E^{\frac{\rho}{2}+1}(t) \phi'(t) dt &\leq CE(S) + C\int^T_S E^\frac{\rho}{2}(t) \phi'(t) \int_{\Gamma_1} |u_t|^2 d\Gamma dt + C \int^T_S E^\frac{\rho}{2}(t) \phi'(t) \int_{\Gamma_1} |q(u_t)|^\frac{\rho+2}{\rho+1} d\Gamma dt
\\
& \hspace{5mm} + C \int^T_S E^\frac{\rho}{2}(t) \phi'(t) \int_{\Gamma_1} |q(u_t)|^\frac{\alpha}{\alpha - 1} d\Gamma dt.
\end{aligned}
\end{equation}

By hypotheses on $q$ and the H\"{o}lder inequality with $\frac{2}{\rho+2} + \frac{\rho}{\rho+2} = 1$, we have
$$
\int_{|u_t|\leq 1}  |u_t|^2 d\Gamma \leq \int_{|u_t|\leq 1} (u_tq(u_t))^\frac{2}{\rho+2} d\Gamma \leq C \biggl(\int_{|u_t|\leq  1} u_tq(u_t)d\Gamma \biggr)^\frac{2}{\rho+2} \leq C \bigl(-E'(t)\bigr)^\frac{2}{\rho+2}
$$
and
$$
\int_{|u_t|> 1} |u_t|^2 d\Gamma \leq c^{2(\rho+2)}_3\int_{|u_t| > 1} (u_tq(u_t))^\frac{2}{\rho+2} d\Gamma \leq C \bigl(-E'(t)\bigr)^\frac{2}{\rho+2}.
$$
Hence
\begin{equation}\label{420}
\begin{aligned}
&\int^T_S E^\frac{\rho}{2}(t)\phi'(t) \int_{\Gamma_1} |u_t|^2 d\Gamma dt
\\
& = \int^T_S E^\frac{\rho}{2}(t)\phi'(t) \int_{|u_t|\leq 1}|u_t|^2 d\Gamma dt + \int^T_S E^\frac{\rho}{2}(t)\phi'(t) \int_{|u_t|>1} |u_t|^2 d\Gamma dt
\\
& \leq C \int^T_S E^\frac{\rho}{2}(t)\phi'(t) \bigl(-E'(t)\bigr)^\frac{2}{\rho+2} dt
\\
& \leq \epsilon_5\int^T_S E^{\frac{\rho}{2}+1}(t)\phi'(t) dt + C(\epsilon_5) E(S).
\end{aligned}
\end{equation}

Similarly as \eqref{414} and \eqref{415} we have
$$
\int_{|u_t| \leq 1} |q(u_t)|^\frac{\rho+2}{\rho+1} d\Gamma \leq \int_{|u_t| \leq 1} |q(u_t)|^2 d\Gamma \leq \int_{|u_t|\leq 1} (u_tq(u_t))^\frac{2}{\rho+2} d\Gamma \leq C \bigl(-E'(t)\bigr)^\frac{2}{\rho+2}
$$
and
$$
\int_{|u_t| > 1} |q(u_t)|^\frac{\rho+2}{\rho+1} d\Gamma \leq -c^\frac{1}{\rho+1}_4 E'(t).
$$
Hence
\begin{equation}\label{421}
\int^T_S E^\frac{\rho}{2}(t) \phi'(t) \int_{\Gamma_1} |q(u_t)|^\frac{\rho+2}{\rho+1} d\Gamma dt \leq \epsilon_6\int^T_S E^{\frac{\rho}{2}+1}(t)\phi'(t) dt + C(\epsilon_6) E(S).
\end{equation}

Similarly as \eqref{416} and \eqref{417} we have
$$
\int_{|u_t| \leq 1} |q(u_t)|^\frac{\alpha}{\alpha-1} d\Gamma  \leq \int_{|u_t| \leq 1} |q(u_t)|^2 d\Gamma \leq C \bigl(-E'(t)\bigr)^\frac{2}{\rho+2}
$$
and
$$
\int_{|u_t| > 1} |q(u_t)|^\frac{\alpha}{\alpha-1} d\Gamma \leq -c^\frac{1}{\alpha - 1}_4 E'(t).
$$
Hence
\begin{equation}\label{422}
\int^T_S E^\frac{\rho}{2}(t) \phi'(t) \int_{\Gamma_1} |q(u_t)|^\frac{\alpha}{\alpha - 1} d\Gamma dt \leq \epsilon_7\int^T_S E^{\frac{\rho}{2}+1}(t)\phi'(t) dt + C(\epsilon_7) E(S).
\end{equation}

By replacing \eqref{420}-\eqref{422} in \eqref{419} and choosing $\epsilon_5, \epsilon_6, \epsilon_7$ sufficiently small, we get
$$
\int^T_S E^{\frac{\rho}{2}+1}(t) \phi'(t) dt \leq C E(S),
$$
which implies by Lemma 5.1 and choosing $\phi(t) = mt$,
$$
E(t) \leq \frac{CE(0)}{(1+t)^\frac{2}{\rho}}.
$$

\subsection{ \textit{Case 3 : $\beta$ does not necessarily have polynomial growth near zero.}}

We will use the method of partitions of boundary modified the arguments in \cite{mart}. For every $t \geq 1$, we consider the following partitions of boundary depending $\phi'(t)$:
$$
\Gamma_1^1 = \{ x \in \Gamma_1 ; |u_t(t)| \leq \phi'(t) \}, \hspace{3mm} \Gamma_1^2 = \{ x \in \Gamma_1 ; \phi'(t) < |u_t(t)| \leq 1 \},\hspace{3mm} \Gamma_1^3 = \{ x \in \Gamma_1 ; |u_t(t)| > 1 \}
$$
if $\phi'(t) \leq 1$, or
$$
\Gamma_1^4 = \{ x \in \Gamma_1 ; |u_t(t)| \leq 1 < \phi'(t) \}, \Gamma_1^5 = \{ x \in \Gamma_1 ; 1 < |u_t(t)| \leq \phi'(t) \}, \Gamma_1^6 = \{ x \in \Gamma_1 ; |u_t(t)| > \phi'(t) > 1 \}
$$
if $\phi'(t) > 1$. Then $\Gamma_1 = \Gamma_1^1 \cup \Gamma_1^2 \cup \Gamma_1^3$ (or $\Gamma_1 = \Gamma_1^4 \cup \Gamma_1^5 \cup \Gamma_1^6$). Let us estimate $I_5$, $I_6$ and $I_7$ on these partitions.

\textbf{(I)} Part on $\Gamma_1^i$, $i = 3, 5, 6$.

By same arguments as \eqref{413}, \eqref{415} and \eqref{417} we get
\begin{equation}\label{423}
\int^T_S E^p(t) \phi'(t)\int_{\Gamma_1^i} |u_t|^2 d\Gamma dt \leq C E^{p+1}(S),
\end{equation}
\begin{equation}\label{424}
\int^T_S E^p(t) \phi'(t)\int_{\Gamma_1^i} |q(u_t)|^\frac{\rho+2}{\rho+1} d\Gamma dt \leq C E^{p+1}(S)
\end{equation}
and
\begin{equation}\label{425}
\int^T_S E^p(t) \phi'(t)\int_{\Gamma_1^i} |q(u_t)|^\frac{\alpha}{\alpha-1} d\Gamma dt \leq C E^{p+1}(S).
\end{equation}

\textbf{(II)} Part on $\Gamma_1^2$.

Using the fact $\beta$ is increasing, $\phi'$ is nonincreasing and \eqref{210}, we obtain
\begin{equation}\label{426}
\begin{aligned}
\int^T_S E^p(t) \phi'(t)\int_{\Gamma_1^2} |u_t|^2 d\Gamma dt &\leq \int^T_S \frac{E^p(t)}{\beta(\phi'(t))} \int_{\Gamma_1^2} u_t q(u_t) d\Gamma dt
\\
&\leq \frac{1}{\beta(\phi'(T))}\int^T_S E^p(t)\bigl(-E'(t)\bigr) dt
\\
&\leq  \frac{1}{\beta(\phi'(T))}E^{p+1}(S),
\end{aligned}
\end{equation}
\begin{equation}\label{427}
\begin{aligned}
\int^T_S E^p(t) \phi'(t)\int_{\Gamma_1^2} |q(u_t)|^\frac{\rho+2}{\rho+1} d\Gamma dt &\leq \int^T_S E^p(t)\int_{\Gamma_1^2} |u'(t)|~|q(u_t)|^2 d\Gamma dt
\\
&\leq \beta^{-1}(1) \int^T_S E^p(t)\int_{\Gamma_1^2} u_t q(u_t) d\Gamma dt
\\
&\leq \beta^{-1}(1) E^{p+1}(S)
\end{aligned}
\end{equation}
and
\begin{equation}\label{428}
\int^T_S E^p(t) \phi'(t)\int_{\Gamma_1^2} |q(u_t)|^\frac{\alpha}{\alpha-1} d\Gamma dt \leq \int^T_S E^p(t)\int_{\Gamma_1^2} |u'(t)|~|q(u_t)|^2 d\Gamma dt \leq \beta^{-1}(1) E^{p+1}(S).
\end{equation}

\textbf{(III)} Part on $\Gamma_1^i$, $i = 1, 4$.

Using the fact $E(t)$ is nonincreasing and $\beta^{-1}$ is increasing, we have
\begin{equation}\label{429}
\begin{aligned}
\int^T_S E^p(t) \phi'(t)\int_{\Gamma_1^i} |u_t|^2 d\Gamma dt &\leq \frac{E^p(S)}{(\beta^{-1}(\phi'(T)))^2} \int^T_S \int_{\Gamma_1^i}\phi'(t) (\beta^{-1}(\phi'(t)))^2 d\Gamma dt
\\
& \leq \frac{meas(\Gamma_1)}{(\beta^{-1}(\phi'(T)))^2}E^p(S) \int^T_S \phi'(t) (\beta^{-1}(\phi'(t)))^2 dt.
\end{aligned}
\end{equation}
From \eqref{210}, we obtain
\begin{equation}\label{430}
\begin{aligned}
\int^T_S E^p(t) \phi'(t)\int_{\Gamma_1^i} |q(u_t)|^\frac{\rho+2}{\rho+1} d\Gamma dt &\leq \int^T_S E^p(t) \phi'(t)\int_{\Gamma_1^i} |q(u_t)|^2 d\Gamma dt
\\
&\leq  \int^T_S E^p(t) \phi'(t)\int_{\Gamma_1^i} (\beta^{-1}(|u'(t)|))^2 d\Gamma dt
\\
&\leq meas(\Gamma_1) E^p(S) \int^T_S \phi'(t) (\beta^{-1}(\phi'(t)))^2 dt
\end{aligned}
\end{equation}
and
\begin{equation}\label{431}
\int^T_S E^p(t) \phi'(t)\int_{\Gamma_1^i} |q(u_t)|^\frac{\alpha}{\alpha-1} d\Gamma dt \leq meas(\Gamma_1) E^p(S) \int^T_S \phi'(t) (\beta^{-1}(\phi'(t)))^2 dt.
\end{equation}
Therefore from \eqref{423}-\eqref{431}, we deduce that
\begin{equation}\label{432}
I_5 + I_6 + I_7 \leq CE^{p+1}(S) +  CE^p(S) \int^T_S \phi'(t) (\beta^{-1}(\phi'(t)))^2 dt.
\end{equation}

To estimate the last term of the right hand side of \eqref{432}, we need the following additional assumption over $\phi$ (see \cite{mart}, p.434):
$$
\int^\infty_1 \phi'(t) (\beta^{-1}(\phi'(t)))^2 dt \hspace{3mm} \text{converges}.
$$
Then by replacing \eqref{432} in \eqref{411} we obtain
\begin{equation}\label{433}
\begin{aligned}
\int^T_S E^{p+1}(t) \phi'(t) dt & \leq C E^{p+1}(S) + CE^p(S) \int^{+\infty}_S \phi'(t)(\beta^{-1}(\phi'(t)))^2 dt
\\
& \leq C E^{p+1}(S) + CE^p(S) \int^{+\infty}_{\phi(S)} \Bigl( \beta^{-1}(\frac{1}{(\phi^{-1})'(s)})\Bigr)^2 ds.
\end{aligned}
\end{equation}

Define $\psi(t) = 1 + \int^t_1 \frac{1}{\beta(\frac{1}{s})} ds$, $t \geq 1$. Then $\psi$ is strictly increasing and convex (cf. \cite{mart}, \cite{park4}). We now take $\phi(t) = \psi^{-1}(t)$, then we can rewrite \eqref{433} as
$$
\int^T_S E^{p+1}(t) \phi'(t) dt \leq C E^{p+1}(S) + \frac{C}{\phi(S)}E^p(S),
$$
which implies, by applying Lemma 5.2 with $p = 1$,
$$
E(t) \leq \frac{C}{\phi^2(t)} \hspace{5mm}\forall t > 0.
$$

Let $s_0$ be a number such that $ \beta(\frac{1}{s_0}) \leq 1$. Since $\beta$ is nondecreasing, we have
$$
\psi(s) \leq 1 + (s -1) \frac{1}{\beta(\frac{1}{s})} \leq \frac{1}{F(\frac{1}{s})} \hspace{5mm}\forall s \geq s_0,
$$
where $F(s) = s\beta(s)$, consequently, having in mind that $\phi = \psi^{-1}$, the last inequality yields
$$
s \leq \phi\Bigl( \frac{1}{F(\frac{1}{s})}\Bigr) = \phi(t) \hspace{5mm}\text{with}\hspace{5mm} t = \frac{1}{F(\frac{1}{s})}.
$$
Then we conclude that
$$
\frac{1}{\phi(t)} \leq F^{-1}(\frac{1}{t}).
$$
Therefore the proof of Theorem 2.2 is completed.

\section{ {\bf Proof of Theorem 2.3 : blow-up} }
\setcounter{equation}{0}

This section is devoted to prove the blow-up result. First of all, we introduce a following lemma that is essential role for proving the blow-up.
\begin{lem}
Under the hypotheses given in Theorem 2.3 the weak solution to problem \eqref{1} verifies
$$
||~|\nabla_g u(t)|_g ||_2 > \lambda_0 \hspace{3mm}\text{for all}\hspace{3mm} 0 < t < T_{\max}.
$$
\end{lem}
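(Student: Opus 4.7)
The proof is the mirror image of Lemma 4.1, applied in the unstable region instead of the stable one. The key idea is that $\lambda_0$ is the unique global maximizer of the function $j(\lambda) = \tfrac{\mu_0}{2}\lambda^2 - \tfrac{K_0^{\gamma+2}}{\gamma+2}\lambda^{\gamma+2}$, with $j(\lambda_0) = d_0$, and that the chain \eqref{330} gives the pointwise lower bound $E(t) \geq J(u(t)) \geq j(||~|\nabla_g u(t)|_g||_2)$. First I would observe that, from the energy identity \eqref{213} with $f = 0$ together with $\mu'(s) \leq 0$ and $q(s)s \geq 0$ coming from $(H_2)$ and $(H_3)$, the map $t \mapsto E(t)$ is nonincreasing on $[0, T_{\max})$, so that $E(t) \leq E(0) < d_0$ throughout the existence interval.

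Next, I would argue by contradiction. Suppose there exists $t^* \in (0, T_{\max})$ with $||~|\nabla_g u(t^*)|_g||_2 \leq \lambda_0$. Since $u \in C([0,T_{\max}); \mathcal{H})$ by Definition 2.1, the map $t \mapsto ||~|\nabla_g u(t)|_g||_2$ is continuous on $[0, T_{\max})$. Combined with the hypothesis $||~|\nabla_g u_0|_g||_2 > \lambda_0$, the intermediate value theorem yields some $\bar t \in (0, t^*]$ with $||~|\nabla_g u(\bar t)|_g||_2 = \lambda_0$. At this instant, the lower bound gives
$$E(\bar t) \;\geq\; J(u(\bar t)) \;\geq\; j(\lambda_0) \;=\; d_0,$$
in direct contradiction with $E(\bar t) \leq E(0) < d_0$. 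Therefore $||~|\nabla_g u(t)|_g||_2 > \lambda_0$ for all $0 < t < T_{\max}$.

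I do not foresee any real obstacle for this particular lemma. The argument rests only on the monotonicity of $E(t)$ and the one-hump shape of $j$, both of which have already been set up in Section 4. The auxiliary threshold $E_1$ introduced in the statement of Theorem 2.3 plays no role at this stage, since the strict bound $E(0) < d_0$ by itself is enough; $E_1$ will enter only later, when building the auxiliary functional and driving a differential inequality toward finite-time blow-up. The structural assumption $\rho < \gamma$ and the technical condition \eqref{215} are likewise not invoked here. The only mild care point is to use the continuity of $|\nabla_g u(\cdot)|_g$ in $L^2(\Omega)$, which follows from $u \in C([0,T_{\max}); \mathcal{H})$ together with the equivalence between $||\nabla u||_2$ and $||~|\nabla_g u|_g||_2$ granted by \eqref{201}.
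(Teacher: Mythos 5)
Your argument is correct, and it proves exactly the strict inequality stated. The underlying mechanism is the same as the paper's (monotonicity of $E$ from the energy identity, the potential-well inequality $E(t) \geq j(||~|\nabla_g u(t)|_g||_2)$, the one-hump shape of $j$, continuity of $t \mapsto ||~|\nabla_g u(t)|_g||_2$, and a contradiction via the intermediate value theorem), but your execution is genuinely leaner: you aim the IVT at the peak value $\lambda_0$ itself, where $j(\lambda_0) = d_0 > E(0)$ immediately closes the contradiction. The paper instead works with the level set $j(\lambda) = E(0)$, which forces a case split on the sign of $E(0)$ (for $E(0) \geq 0$ there are two roots $\lambda_1' < \lambda_0 < \lambda_1$; for $E(0) < 0$ only one root $\lambda_2 > \lambda_0$) plus further sub-cases on whether the hypothetical crossing point lies above or below $\lambda_0$. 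What the paper's longer route buys is the quantitatively stronger conclusion $||~|\nabla_g u(t)|_g||_2 \geq \lambda_1$ (resp. $\lambda_2$), with $\lambda_1, \lambda_2 > \lambda_0$ determined by $E(0)$; however, inspecting \eqref{511} and \eqref{521}, only the strict bound $||~|\nabla_g u(t)|_g||_2 > \lambda_0$ is actually invoked downstream, so your version suffices for the blow-up proof. You are also right that $E_1$, the restriction $\rho < \gamma$, and \eqref{215} play no role at this stage, and your care point about continuity (via $u \in C([0,T_{\max});\mathcal{H})$ and the equivalence of $||\nabla u||_2$ with $||~|\nabla_g u|_g||_2$) is the correct one.
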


\begin{proof}

We recall the function, for $\lambda > 0$,
$$
j(\lambda) = \frac{\mu_0}{2}\lambda^2 - \frac{1}{\gamma +2}K^{\gamma+2}_0 \lambda^{\gamma+2},
$$
where $ K_0 = \sup_{u \in \mathcal{H}, u \neq 0} \Bigl(\frac{||u||_{\gamma+2, \Gamma_1}}{||~|\nabla_g u|_g ||_2} \Bigr)$. Then
$$
\lambda_0 = \Bigl(\frac{\mu_0}{K^{\gamma+2}_0} \Bigr)^{1/\gamma}
$$
is the absolute maximum point of $j$ and
$$
j(\lambda_0) = \frac{\gamma\mu_0}{2(\gamma+2)}\lambda^2_0 = d_0.
$$

The energy associated to problem \eqref{1} is given by
$$
E(t) = \frac{1}{2}||u_t(t)||^2_2 + \frac{1}{2} \mu(t) ||~|\nabla_g u(t)|_g ||^2_2 - \frac{1}{\gamma+2} ||u(t)||^{\gamma+2}_{\gamma+2, \Gamma_1}.
$$
We observe that from the definition of $j$, we have
\begin{equation}\label{501}
E(t) \geq \frac{\mu_0}{2} ||~|\nabla_g u(t)|_g ||^2_2 - \frac{1}{\gamma + 2} ||u(t)||^{\gamma+2}_{\gamma+2, \Gamma_1} \geq j(||~|\nabla_g u(t)|_g ||_2) \hspace{3mm} \text{for all} \hspace{3mm} t \geq 0.
\end{equation}

Note that $j$ is increasing for $0 <\lambda <\lambda_0$, decreasing for $\lambda > \lambda_0$, $j(\lambda) \rightarrow -\infty$ as $\lambda \rightarrow+\infty$.

We will now consider the initial energy $E(0)$ divided into two cases: $E(0) \geq 0$ and $E(0) < 0$.

\textbf{Case 1} : $E(0) \geq 0$.

There exist $\lambda'_1 < \lambda_0 < \lambda_1$ such that
\begin{equation}\label{502}
j(\lambda_1) = j(\lambda'_1) = E(0).
\end{equation}

By considering that $E(t)$ is nonincreasing, we have
\begin{equation}\label{503}
E(t) \leq E(0) \hspace{3mm}\text{for all} \hspace{3mm} t>0.
\end{equation}
From \eqref{501} and \eqref{502} we deduce
\begin{equation}\label{504}
j(||~|\nabla_g u_0|_g ||_2) \leq E(0) = j(\lambda_1).
\end{equation}
Since $||~|\nabla_g u_0|_g ||_2 > \lambda_0$, $\lambda_0 < \lambda_1$ and $j(\lambda)$ is decreasing for $\lambda_0 < \lambda$, from \eqref{504} we get
\begin{equation}\label{505}
||~|\nabla_g u_0|_g ||_2 \geq \lambda_1.
\end{equation}

Now we will prove that
\begin{equation}\label{506}
||~|\nabla_g u(t)|_g ||_2 \geq \lambda_1 \hspace{3mm}\text{for all}\hspace{3mm} 0 < t < T_{\max}
\end{equation}
by using the contradiction method. Suppose that \eqref{506} does not hold. Then there exists $t^* \in (0, T_{\max})$ which verifies
\begin{equation}\label{507}
||~|\nabla_g u(t^*)|_g ||_2 < \lambda_1.
\end{equation}

If $||~|\nabla_g u(t^*)|_g ||_2 > \lambda_0$, from \eqref{501}, \eqref{502} and \eqref{507} we can write
$$
E(t^*) \geq j(||~|\nabla_g u(t^*)|_g ||_2) > j(\lambda_1) = E(0),
$$
which contradicts \eqref{503}.

If $||~|\nabla_g u(t^*)|_g ||_2 \leq \lambda_0$, we have, in view of \eqref{505}, that there exists $\bar{\lambda}$ which verifies
\begin{equation}\label{508}
||~|\nabla_g u(t^*)|_g ||_2 \leq \lambda_0 < \bar{\lambda} < \lambda_1 \leq ||~|\nabla_g u_0|_g ||_2.
\end{equation}
Consequently, from the continuity of the function $||~|\nabla_g u(\cdot)|_g ||_2$ there exists $\bar{t} \in (0,t^*)$ verifying $||~|\nabla_g u(\bar{t})|_g ||_2 = \bar{\lambda}$. Then from the last identity and
taking \eqref{501}, \eqref{502} and \eqref{508} into account we deduce
$$
E(\bar{t}) \geq j(||~|\nabla_g u(\bar{t})|_g ||_2) = j(\bar{\lambda}) > j(\lambda_1) = E(0),
$$
which also contradicts \eqref{503}.

\textbf{Case 2} : $E(0) < 0$.

There is $\lambda_2 > \lambda_0$ such that
$$
j(\lambda_2) = E(0),
$$
consequently, by \eqref{501} we have
$$
j(||~|\nabla_g u_0|_g ||_2) \leq E(0) = j(\lambda_2).
$$
From the fact $j(\lambda)$ is decreasing for $\lambda_0 < \lambda$, we get
$$
||~|\nabla_g u_0|_g ||_2 \geq \lambda_2.
$$
By the same argument as Case 1, we obtain
$$
||~|\nabla_g u(t)|_g ||_2 \geq \lambda_2 \hspace{3mm}\text{for all}\hspace{3mm} 0 < t < T_{\max}.
$$

Thus the proof of Lemma 6.1 is completed.

\end{proof}

Now we will prove the blow-up result. In order to prove that $T_{\max}$ is necessarily finite, we argue by contradiction. Assume that the weak solution $u(t)$ can be extended to the whole interval $[0, \infty)$.

Let $E_1$ be a real number such that
$$
E_1 =
\begin{cases}
0 \hspace{3mm}&\text{if}\hspace{3mm} E(0) < 0,
\\
\text{positive constant satisfying}\hspace{3mm} E(0) < E_1 < d_0 \hspace{3mm}\text{and}\hspace{3mm} E_1 < E(0) + 1 \hspace{3mm}&\text{if}\hspace{3mm} E(0) \geq 0.
\end{cases}
$$
By setting $G(t) := E_1 - E(t)$, we have
\begin{equation}\label{509}
G'(t) = - E'(t) \geq 0,
\end{equation}
which implies that $G(t)$ is nondecreasing, consequently,
\begin{equation}\label{510}
0<  G_0 := E_1 - E(0) <1
\end{equation}
and from Lemma 6.1, \eqref{208} and the definition of $d_0$,
\begin{equation}\label{511}
\begin{aligned}
G_0 \leq G(t) &\leq E_1 - \frac{\mu_0}{2}||~|\nabla_g u(t)|_g ||^2_2 + \frac{1}{\gamma+2}||u(t)||^{\gamma+2}_{\gamma+2, \Gamma_1}
\\
&< d_0 - \frac{\mu_0}{2}\lambda^2_0 + \frac{1}{\gamma+2}||u(t)||^{\gamma+2}_{\gamma+2, \Gamma_1}
\\
&\leq  \frac{1}{\gamma+2}||u(t)||^{\gamma+2}_{\gamma+2, \Gamma_1}.
\end{aligned}
\end{equation}

We define
\begin{equation}\label{512}
M(t)= G^{1-\overline{\chi}}(t) + \tau N(t), \hspace{3mm} N(t) = \int_\Omega u_t u dx,
\end{equation}
where $\overline{\chi}$ and $\tau$ are small positive constants to be chosen later. Then we have
\begin{equation}\label{513}
M'(t) = (1-\overline{\chi})G^{-\overline{\chi}}(t)G'(t) + \tau N'(t).
\end{equation}

We are now going to analyze the last term on the right-hand side of \eqref{513}.
\begin{lem}
\begin{equation}\label{514}
\begin{aligned}
N'(t) &\geq  C_{15} \bigl( ||u_t||^2_2 + ||u||^{\gamma+2}_{\gamma+2, \Gamma_1} + G(t) - G'(t) G^{\overline{\chi}- \chi}_0 G^{-\overline{\chi}}(t) \bigr)
\\
&\hspace{5mm} + \mu_0\Bigl(\frac{\theta}{2} -1\Bigr) ||~|\nabla_g u|_g ||^2_2 - \theta E_1 - \frac{\zeta}{\epsilon_8},
\end{aligned}
\end{equation}
where $C_{15}$ is for some positive constant, $0 < \chi < \frac{\gamma-\rho}{(\rho+2)(\gamma+2)}$, $\theta = \gamma +2 - \epsilon_8$ with $0 < \epsilon_8 < \min\{1, \gamma\}$ and $\zeta =  \frac{(\gamma+1)meas(\Gamma_1) \bigl(\beta^{-1}(1)\bigr)^\frac{\gamma+2}{\gamma+1}}{\gamma+2}$.
\end{lem}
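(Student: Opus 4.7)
The plan is to differentiate $N(t)$ directly, use the PDE and Lemma 2.1 to get an explicit expression, then rewrite it using $G(t)=E_1-E(t)$ and control the boundary damping integral via a case split on $\{|u_t|\le 1\}$ versus $\{|u_t|>1\}$. Specifically, $N'(t)=\|u_t\|_2^2+\int_\Omega u_{tt}u\,dx$, and substituting $u_{tt}=\mu(t)Lu$ (since $f=0$), applying Lemma 2.1(iii), using $u=0$ on $\Gamma_0$ and $\mu(t)\partial_{\nu_L}u=|u|^\gamma u-q(u_t)$ on $\Gamma_1$, I obtain
$$
N'(t)=\|u_t\|_2^2-\mu(t)\||\nabla_g u|_g\|_2^2+\|u\|^{\gamma+2}_{\gamma+2,\Gamma_1}-\int_{\Gamma_1}q(u_t)u\,d\Gamma.
$$

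Next I would convert some of the source term into a $G(t)$ term. Setting $\theta=\gamma+2-\epsilon_8$ and writing $1=\frac{\theta}{\gamma+2}+\frac{\epsilon_8}{\gamma+2}$, the definition of $G$ gives
$$
\frac{\theta}{\gamma+2}\|u\|^{\gamma+2}_{\gamma+2,\Gamma_1}=\theta G(t)-\theta E_1+\frac{\theta}{2}\|u_t\|_2^2+\frac{\theta}{2}\mu(t)\||\nabla_g u|_g\|_2^2,
$$
which, together with $\mu(t)\ge\mu_0$, produces the explicit coefficients $\mu_0(\theta/2-1)$ in front of $\||\nabla_g u|_g\|_2^2$, a positive multiple of $\|u_t\|_2^2$, the term $\theta G(t)-\theta E_1$, and a residual $\frac{\epsilon_8}{\gamma+2}\|u\|^{\gamma+2}_{\gamma+2,\Gamma_1}$ that remains available for absorbing small pieces coming from the boundary damping estimate.

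The main work lies in estimating $\int_{\Gamma_1}q(u_t)u\,d\Gamma$ from above. On $\{|u_t|\le 1\}$, \eqref{210} gives $|q(u_t)|\le\beta^{-1}(1)$, and Young's inequality with conjugate exponents $(\gamma+2,\frac{\gamma+2}{\gamma+1})$ yields an arbitrarily small multiple of $\|u\|^{\gamma+2}_{\gamma+2,\Gamma_1}$ plus the constant $\zeta/\epsilon_8$ (up to the precise $\epsilon_8$-power determined by Young's). On $\{|u_t|>1\}$, \eqref{211} combined with the crucial hypothesis $\rho\le\gamma$ gives pointwise
$$
|q(u_t)|^{\frac{\gamma+2}{\gamma+1}}\le c_4^{\frac{1}{\gamma+1}}|u_t|^{(\rho+1)\frac{\gamma+2}{\gamma+1}}\le c_4^{\frac{1}{\gamma+1}}|u_t|^{\rho+2}\le c_4^{\frac{1}{\gamma+1}}c_3^{-1}q(u_t)u_t,
$$
so a second Young inequality bounds the boundary integral by a small fraction of $\|u\|^{\gamma+2}_{\gamma+2,\Gamma_1}$ plus a constant multiple of $\int_{|u_t|>1}q(u_t)u_t\,d\Gamma$; the energy identity $E'(t)=\tfrac12\mu'(t)\||\nabla_g u|_g\|_2^2-\int_{\Gamma_1}q(u_t)u_t\,d\Gamma$ with $\mu'\le 0$ then shows that this last integral is bounded by $G'(t)$.

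Finally, choosing $\epsilon_8$ small enough so that the accumulated $\|u\|^{\gamma+2}_{\gamma+2,\Gamma_1}$ fractions are absorbed by the $\frac{\epsilon_8}{\gamma+2}\|u\|^{\gamma+2}_{\gamma+2,\Gamma_1}$ leftover of Step 2 (and similarly absorbing small pieces of $\|u_t\|_2^2$), and using $G(t)\ge G_0>0$ with $0<\chi<\overline{\chi}$ to rewrite the coefficient of $G'(t)$ in the form $G_0^{\overline{\chi}-\chi}G^{-\overline{\chi}}(t)$ (so that later it pairs with $\tfrac{d}{dt}G^{1-\overline{\chi}}(t)=(1-\overline{\chi})G^{-\overline{\chi}}(t)G'(t)$ inside $M(t)$), one arrives at the stated lower bound with a suitable $C_{15}>0$. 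The principal obstacle is the careful Young-inequality bookkeeping in the $\{|u_t|>1\}$ case: it is precisely the constraint $\rho\le\gamma$ (which we have by hypothesis since $\rho<\gamma$ in Theorem 2.3) that makes $|q(u_t)|^{(\gamma+2)/(\gamma+1)}$ pointwise dominated by $q(u_t)u_t$, and this is what allows the damping term to be reabsorbed into $G'(t)$ rather than destroying the estimate.
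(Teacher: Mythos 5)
Your computation of $N'(t)$, the rebalancing with $\theta=\gamma+2-\epsilon_8$ via $G(t)=E_1-E(t)$, and the estimate on $\{|u_t|\le 1\}$ (Hölder plus Young with exponents $\gamma+2$, $\tfrac{\gamma+2}{\gamma+1}$, producing $\zeta/\epsilon_8$) all coincide with the paper's proof. The gap is in the $\{|u_t|>1\}$ part. Your pointwise bound $|q(u_t)|^{\frac{\gamma+2}{\gamma+1}}\le C\,q(u_t)u_t$ (correct, since $\rho\le\gamma$ and $|u_t|>1$) followed by Young's inequality yields
\[
\int_{|u_t|>1}|q(u_t)|\,|u|\,d\Gamma\le \epsilon\,\|u\|^{\gamma+2}_{\gamma+2,\Gamma_1}+C(\epsilon)\,G'(t),
\]
i.e.\ a damping contribution $-C\,G'(t)$ with a \emph{constant} coefficient. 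The lemma requires the coefficient $G_0^{\overline{\chi}-\chi}G^{-\overline{\chi}}(t)$, and your final step --- ``using $G(t)\ge G_0$ to rewrite the coefficient of $G'(t)$ in that form'' --- goes the wrong way: $G'(t)\le C'G'(t)G_0^{\overline{\chi}-\chi}G^{-\overline{\chi}}(t)$ would force $G^{\overline{\chi}}(t)\le C'G_0^{\overline{\chi}-\chi}$, i.e.\ an a priori upper bound on $G(t)$, which is not available (on the contrary, $G$ is the quantity meant to blow up). This is not cosmetic: in $M'(t)=(1-\overline{\chi})G^{-\overline{\chi}}(t)G'(t)+\tau N'(t)$ the negative damping term can only be absorbed by $(1-\overline{\chi})G^{-\overline{\chi}}(t)G'(t)$ if it already carries the factor $G^{-\overline{\chi}}(t)$; a bare $-\tau C G'(t)$ cannot be absorbed when $G$ is large, and setting $\overline{\chi}=0$ to dodge the issue destroys the superlinear inequality $M'\ge CM^{1/(1-\overline{\chi})}$ needed for finite-time blow-up.

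The paper's device, which your outline omits, is to insert the weight \emph{before} applying Young: multiply and divide by $\|u\|^{(\gamma+2)\chi}_{\gamma+2,\Gamma_1}$, so that
\[
c_4\Bigl(\int_{|u_t|>1}|u_t|^{\rho+2}d\Gamma\Bigr)^{\frac{\rho+1}{\rho+2}}\|u\|_{\rho+2,\Gamma_1}
\le\Bigl(C(\epsilon_9)\int_{|u_t|>1}|u_t|^{\rho+2}d\Gamma+\epsilon_9\|u\|_{\gamma+2,\Gamma_1}^{(\gamma+2)(\chi+\frac{1}{\gamma+2})(\rho+2)}\Bigr)\|u\|^{-(\gamma+2)\chi}_{\gamma+2,\Gamma_1}.
\]
By \eqref{511} one has $\|u\|^{-(\gamma+2)\chi}_{\gamma+2,\Gamma_1}\le\bigl((\gamma+2)G(t)\bigr)^{-\chi}$, which converts the damping term into $G'(t)G^{-\chi}(t)\le G_0^{\overline{\chi}-\chi}G^{-\overline{\chi}}(t)G'(t)$; note this last step needs $\overline{\chi}<\chi$, the opposite of your ``$0<\chi<\overline{\chi}$''. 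The hypothesis $\chi<\frac{\gamma-\rho}{(\rho+2)(\gamma+2)}$ is precisely what keeps the exponent $(\gamma+2)(\chi+\frac{1}{\gamma+2})(\rho+2)$ strictly below $\gamma+2$, so the second Young term is still dominated by $G_0^{-1}\|u\|^{\gamma+2}_{\gamma+2,\Gamma_1}$ and absorbed, as you intended, by the residual $\frac{\epsilon_8}{\gamma+2}\|u\|^{\gamma+2}_{\gamma+2,\Gamma_1}$. With that modification the rest of your argument is sound.
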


\begin{proof}

Using Eq. \eqref{1}, we obtain
\begin{equation}\label{515}
\begin{aligned}
N'(t) &= ||u_t||^2_2 - \mu(t) ||~|\nabla_g u|_g ||^2_2 + ||u||^{\gamma+2}_{\gamma+2, \Gamma_1} - \int_{\Gamma_1} q(u_t) u d\Gamma
\\
&\geq \Bigl(1+\frac{\theta}{2} \Bigr)||u_t||^2_2 + \mu_0\Bigl(\frac{\theta}{2} -1\Bigr) ||~|\nabla_g u|_g ||^2_2  + \Bigl(1 - \frac{\theta}{\gamma+2}\Bigr) ||u||^{\gamma+2}_{\gamma+2, \Gamma_1} + \theta G(t) - \theta E_1 \\
& \hspace{5mm} - \int_{\Gamma_1} q(u_t) u d\Gamma,
\end{aligned}
\end{equation}
where $\theta = \gamma +2 - \epsilon_8$ with $0 < \epsilon_8 < \min\{1, \gamma\}$.

We will now estimate the last term on the right-hand side of \eqref{515}. We note that
$$
\Bigl|\int_{\Gamma_1} q(u_t) u d\Gamma\Bigr| \leq \int_{\Gamma_1} |q(u_t)| ~|u| d\Gamma = \int_{|u_t| \leq 1} |q(u_t)| ~|u| d\Gamma + \int_{|u_t| > 1} |q(u_t)| ~|u| d\Gamma.
$$

By using \eqref{210} and the imbedding $L^{\gamma+2}(\Gamma_1)\hookrightarrow L^{\rho+2}(\Gamma_1)$, we have
\begin{equation}\label{516}
\begin{aligned}
\int_{|u_t| \leq 1} |q(u_t)| ~|u| d\Gamma &\leq \Bigl(\int_{|u_t| \leq 1} |q(u_t)|^\frac{\rho+2}{\rho+1} d\Gamma \Bigr)^\frac{\rho+1}{\rho+2} ~\Bigl(\int_{|u_t| \leq 1} |u|^{\rho+2} d\Gamma \Bigr)^\frac{1}{\rho+2}
\\
& \leq \Bigl(\int_{|u_t| \leq 1} |\beta^{-1}(1)|^\frac{\rho+2}{\rho+1} d\Gamma \Bigr)^\frac{\rho+1}{\rho+2} ~||u||_{\rho+2, \Gamma_1}
\\
& \leq \beta^{-1}(1) \bigl(meas(\Gamma_1)\bigr)^\frac{\gamma+1}{\gamma+2} ~||u|||_{\gamma+2, \Gamma_1}
\\
& \leq \frac{(\gamma+1)meas(\Gamma_1) \bigl(\beta^{-1}(1)\bigr)^\frac{\gamma+2}{\gamma+1}}{\epsilon_8 (\gamma+2)} + \frac{\epsilon^{\gamma+1}_8}{\gamma+2} ||u||^{\gamma+2}_{\gamma+2, \Gamma_1}.
\end{aligned}
\end{equation}

On the other hand, by using \eqref{211}, we obtain
\begin{equation}\label{517}
\begin{aligned}
\int_{|u_t| > 1} |q(u_t)| ~|u| d\Gamma &\leq c_4\int_{|u_t| > 1}|u_t|^{\rho+1} |u| d\Gamma
\\
& \leq c_4 \Bigl(\int_{|u_t| > 1}|u_t|^{\rho+2} d\Gamma\Bigr)^\frac{\rho+1}{\rho+2} ||u||_{\rho+2, \Gamma_1}
\\
& \leq \Bigl( C(\epsilon_9)\int_{|u_t| > 1}|u_t|^{\rho+2} d\Gamma + \epsilon_9||u||^{(\gamma+2)(\chi+\frac{1}{\gamma+2})(\rho+2)}_{\gamma+2, \Gamma_1} \Bigr)~ ||u||^{-(\gamma+2)\chi}_{\gamma+2, \Gamma_1},
\end{aligned}
\end{equation}
where $0 < \chi < \frac{\gamma-\rho}{(\rho+2)(\gamma+2)}$ and  $C(\epsilon_9)$, $\epsilon_9$ are for some positive constants. Moreover $\chi < \frac{\gamma-\rho}{(\rho+2)(\gamma+2)}$ implies that $(\chi+\frac{1}{\gamma+2})(\rho+2)< 1$. Hence we get
$$
||u||^{(\gamma+2)(\chi+\frac{1}{\gamma+2})(\rho+2)}_{\gamma+2, \Gamma_1} \leq
\begin{cases}
||u||^{\gamma+2}_{\gamma+2, \Gamma_1} &\hspace{3mm}\text{if}\hspace{3mm} ||u||^{\gamma+2}_{\gamma+2, \Gamma_1}>1,
\\
G^{-1}_0G_0 &\hspace{3mm}\text{if}\hspace{3mm} ||u||^{\gamma+2}_{\gamma+2, \Gamma_1} \leq 1.
\end{cases}
$$
From \eqref{510} and \eqref{511} we have
$$
||u||^{(\gamma+2)(\chi+\frac{1}{\gamma+2})(\rho+2)}_{\gamma+2, \Gamma_1} \leq G^{-1}_0 ||u||^{\gamma+2}_{\gamma+2, \Gamma_1}
$$
and, consequently, from \eqref{211}, \eqref{509}, \eqref{511} and \eqref{517},
\begin{equation}\label{518}
\begin{aligned}
\int_{|u_t| > 1} |q(u_t)| ~|u| d\Gamma &\leq  \Bigl( C(\epsilon_9)\int_{|u_t| > 1}|u_t|^{\rho+2} d\Gamma + \epsilon_9 G^{-1}_0 ||u||^{\gamma+2}_{\gamma+2, \Gamma_1} \Bigr)~ ||u||^{-(\gamma+2)\chi}_{\gamma+2, \Gamma_1}
\\
& \leq \Bigl( C(\epsilon_9) G'(t) + \epsilon_9 G^{-1}_0 ||u||^{\gamma+2}_{\gamma+2, \Gamma_1} \Bigr)~ G^{-\chi}(t)
\\
&\leq  C(\epsilon_9) G'(t) G^{\overline{\chi}- \chi}_0 G^{-\overline{\chi}}(t) + \epsilon_9 G^{-(\chi+1)}_0 ||u||^{\gamma+2}_{\gamma+2, \Gamma_1},
\end{aligned}
\end{equation}
for $0< \overline{\chi} < \chi$. From \eqref{516} and \eqref{518}, we get that
\begin{equation}\label{519}
\Bigl|\int_{\Gamma_1} q(u_t) u d\Gamma \Bigr| \leq  C(\epsilon_9) G'(t) G^{\overline{\chi}- \chi}_0 G^{-\overline{\chi}}(t) + \Bigl( \frac{\epsilon^{\gamma+1}_8}{\gamma+2} + \epsilon_9 G^{-(\chi+1)}_0\Bigr) ||u||^{\gamma+2}_{\gamma+2, \Gamma_1} + \frac{\zeta}{\epsilon_8},
\end{equation}
where $\zeta =  \frac{(\gamma+1)meas(\Gamma_1) \bigl(\beta^{-1}(1)\bigr)^\frac{\gamma+2}{\gamma+1}}{\gamma+2}$.

By replacing \eqref{519} in \eqref{515} and choosing $\epsilon_9$ small enough we obtain
$$
N'(t) \geq  C_{16} \bigl( ||u_t||^2_2 + ||u||^{\gamma+2}_{\gamma+2, \Gamma_1} + G(t) - G'(t) G^{\overline{\chi}- \chi}_0 G^{-\overline{\chi}}(t) \bigr) +  \mu_0\Bigl(\frac{\theta}{2} -1\Bigr) ||~|\nabla_g u|_g ||^2_2 - \theta E_1 - \frac{\zeta}{\epsilon_8},
$$
where $C_{16}$ is a positive constant. Therefore \eqref{514} follows.

\end{proof}

The following Lemma estimates the last three terms on the right-hand side of \eqref{514}.

\begin{lem}
\begin{equation}\label{520}
\mu_0\Bigl(\frac{\theta}{2} -1\Bigr) ||~|\nabla_g u|_g ||^2_2 - \theta E_1 - \frac{\zeta}{\epsilon_8} > 0  \hspace{3mm}\text{for}\hspace{3mm} \frac{\ell - \sqrt{\ell^2 - 4\varrho\zeta}}{2\varrho} \leq \epsilon_8 \leq \frac{\ell + \sqrt{\ell^2 - 4\varrho\zeta}}{2\varrho},
\end{equation}
where $\varrho = \frac{\mu_0\lambda^2_0}{2} - E_1$ and $\ell = \frac{\gamma\mu_0\lambda^2_0}{2} - (\gamma+2) E_1$.

\end{lem}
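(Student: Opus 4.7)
The plan is to reduce the inequality to a simple quadratic condition in the parameter $\epsilon_8$ and then verify it under the standing hypothesis \eqref{215}. First, I would invoke Lemma 6.1 together with the restriction $\epsilon_8 < \gamma$ (so that $\theta/2 - 1 = (\gamma-\epsilon_8)/2 > 0$) to replace $||~|\nabla_g u|_g||_2^2$ by $\lambda_0^2$ from below:
\begin{equation*}
\mu_0\Bigl(\tfrac{\theta}{2}-1\Bigr)||~|\nabla_g u|_g||_2^2 - \theta E_1 - \tfrac{\zeta}{\epsilon_8}
> \mu_0\Bigl(\tfrac{\theta}{2}-1\Bigr)\lambda_0^2 - \theta E_1 - \tfrac{\zeta}{\epsilon_8}.
\end{equation*}
Substituting $\theta = \gamma+2-\epsilon_8$ and collecting terms by powers of $\epsilon_8$, the right-hand side becomes
\begin{equation*}
\Bigl[\tfrac{\gamma\mu_0\lambda_0^2}{2} - (\gamma+2)E_1\Bigr] - \epsilon_8\Bigl[\tfrac{\mu_0\lambda_0^2}{2} - E_1\Bigr] - \tfrac{\zeta}{\epsilon_8} = \ell - \varrho\epsilon_8 - \tfrac{\zeta}{\epsilon_8}.
\end{equation*}

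Since $\epsilon_8 > 0$, positivity is equivalent to the quadratic inequality
\begin{equation*}
p(\epsilon_8) := \varrho\epsilon_8^2 - \ell\epsilon_8 + \zeta < 0.
\end{equation*}
Next I would check that $\varrho > 0$: from $E_1 < d_0 = \frac{\gamma\mu_0\lambda_0^2}{2(\gamma+2)} < \frac{\mu_0\lambda_0^2}{2}$, the coefficient $\varrho = \frac{\mu_0\lambda_0^2}{2} - E_1$ is strictly positive, so $p$ opens upward and is negative precisely strictly between its real roots (when they exist).

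The main step is to show that the discriminant $\ell^2 - 4\varrho\zeta$ is nonnegative. Writing $\ell^2 = \tfrac{(\gamma\mu_0\lambda_0^2 - 2(\gamma+2)E_1)^2}{4}$ and $4\varrho\zeta = \tfrac{2(\gamma+1)\,\text{meas}(\Gamma_1)(\mu_0\lambda_0^2 - 2E_1)(\beta^{-1}(1))^{(\gamma+2)/(\gamma+1)}}{\gamma+2}$, the condition $\ell^2 \geq 4\varrho\zeta$ rearranges to exactly
\begin{equation*}
\beta^{-1}(1) \leq \Bigl(\tfrac{(\gamma+2)(\gamma\mu_0\lambda_0^2 - 2(\gamma+2)E_1)^2}{8(\gamma+1)\,\text{meas}(\Gamma_1)(\mu_0\lambda_0^2 - 2E_1)}\Bigr)^{(\gamma+1)/(\gamma+2)},
\end{equation*}
which is precisely hypothesis \eqref{215}. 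Hence the two real roots of $p$ exist and are $\frac{\ell \pm \sqrt{\ell^2 - 4\varrho\zeta}}{2\varrho}$, and for every $\epsilon_8$ in the closed interval between them one has $p(\epsilon_8) \leq 0$; combined with the earlier strict lower bound coming from Lemma 6.1, we conclude the claimed strict inequality on the specified range.

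The main obstacle I expect is bookkeeping rather than conceptual: one has to verify that the endpoints $\frac{\ell \pm \sqrt{\ell^2-4\varrho\zeta}}{2\varrho}$ actually lie inside the admissible window $0 < \epsilon_8 < \min\{1,\gamma\}$ used earlier (so that $\theta > 2$ and the sign manipulations in \eqref{515} and \eqref{516} remain valid). This is implicit in \eqref{215} because a small $\beta^{-1}(1)$ forces $\zeta$ to be small, squeezing both roots close to $\ell/\varrho$ and away from $0$ and $\gamma$, but making this quantitative is the delicate part of the argument.
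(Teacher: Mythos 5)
Your argument is correct and is essentially identical to the paper's proof: lower-bound $||~|\nabla_g u|_g||_2^2$ by $\lambda_0^2$ via Lemma 6.1, rewrite the resulting expression as $-\frac{\varrho\epsilon_8^2-\ell\epsilon_8+\zeta}{\epsilon_8}$, verify $\varrho>0$ and $\ell>0$ from $E_1<d_0$, and observe that \eqref{215} is exactly the nonnegativity of the discriminant $\ell^2-4\varrho\zeta$, so the quadratic is nonpositive between its roots and the strict inequality is inherited from Lemma 6.1. The final concern you raise (whether the roots actually meet the window $0<\epsilon_8<\min\{1,\gamma\}$) is likewise left unaddressed in the paper, but it does not affect the lemma as stated.
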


\begin{proof}

From Lemma 5.1 and the definition of $\theta$, we have
\begin{multline}\label{521}
\mu_0\Bigl(\frac{\theta}{2} -1\Bigr) ||~|\nabla_g u|_g ||^2_2 - \theta E_1 - \frac{\zeta}{\epsilon_8} > \mu_0\Bigl(\frac{\theta}{2} -1\Bigr)\lambda^2_0 - \theta E_1 - \frac{\zeta}{\epsilon_8}
\\
= \Bigl(E_1 - \frac{\mu_0\lambda^2_0}{2}\Bigr) \epsilon_8 - \frac{\zeta}{\epsilon_8} + \frac{\gamma\mu_0\lambda^2_0}{2} - (\gamma+2) E_1  = - \frac{\varrho\epsilon^2_8 - \ell\epsilon_8 + \zeta}{\epsilon_8}:= P(\epsilon_8).
\end{multline}

We note that
$$
\varrho = \frac{\mu_0\lambda^2_0}{2} - E_1 > \frac{\mu_0\lambda^2_0}{2} - d_0 = \frac{1}{\gamma + 2} K^{\gamma+2}_0 \lambda^{\gamma+2}_0 > 0
$$
and
$$
\ell = \frac{\gamma\mu_0\lambda^2_0}{2} - (\gamma+2) E_1 > \frac{\gamma\mu_0\lambda^2_0}{2} - (\gamma+2) d_0 = 0.
$$

Since \eqref{215} holds, we get
$$
\ell^2 - 4\varrho \zeta \geq 0.
$$
Therefore, $P(\epsilon_8)$ represents a curve connecting horizontal axis points $\frac{\ell - \sqrt{\ell^2 - 4\varrho\zeta}}{2\varrho}$ and $\frac{\ell + \sqrt{\ell^2 - 4\varrho\zeta}}{2\varrho}$, and
$$
P(\epsilon_8) \geq 0 \hspace{3mm}\text{for}\hspace{3mm} \frac{\ell - \sqrt{\ell^2 - 4\varrho\zeta}}{2\varrho} \leq \epsilon_8 \leq \frac{\ell + \sqrt{\ell^2 - 4\varrho\zeta}}{2\varrho}.
$$

\begin{figure}[ht]
\begin{center}
\resizebox{0.40\textwidth}{!}{%
  \includegraphics{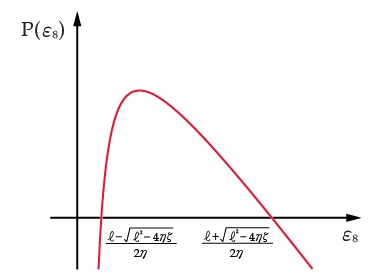}}
\caption{The figure of $P(\epsilon_8)$}\label{}
\end{center}
\end{figure}

Thus we obtain
$$
\mu_0\Bigl(\frac{\theta}{2} -1\Bigr) ||~|\nabla_g u|_g ||^2_2 - \theta E_1 - \frac{\zeta}{\epsilon_8} > 0  \hspace{3mm}\text{for}\hspace{3mm} \frac{\ell - \sqrt{\ell^2 - 4\varrho\zeta}}{2\varrho} \leq \epsilon_8 \leq \frac{\ell + \sqrt{\ell^2 - 4\varrho\zeta}}{2\varrho}.
$$

\end{proof}

Combining \eqref{513}, \eqref{514}, \eqref{520} and then choosing $0 < \overline{\chi} < \min\{\frac{1}{2}, \chi\}$ and $\tau$ small enough, we obtain
$$
M'(t) \geq C_{17} \bigl( ||u_t||^2_2 + ||u||^{\gamma+2}_{\gamma+2, \Gamma_1} + G(t) \bigr),
$$
where $C_{17}$ is a positive constant, which implies that $M(t)$ is a positive increasing function. By same arguments as p.333 in \cite{ha3}, we have
$$
M'(t) \geq C_{18} M^\frac{1}{1-\overline{\chi}}(t) \hspace{3mm}\text{for all}\hspace{3mm} t\geq 0,
$$
where $C_{18}$ is a positive constant and $1 < \frac{1}{1-\overline{\chi}} < 2$. Hence we conclude that $M(t)$ blows up in finite time and $u$ also blows up in finite time. Thus this is a contradiction, consequently, the proof of Theorem 2.3 is completed.

\section*{Acknowledgments}

This research was supported by Basic Science Research Program through the National Research Foundation of Korea(NRF) funded by the Ministry of Education (2022R1I1A3055309).



\end{document}